\begin{document}

\setcounter{secnumdepth}{2}
\setcounter{tocdepth}{2}


\newcommand \driicell [1] {\drtwocell \omit {#1}}
\newcommand \ddriicell [1] {\ddrtwocell \omit {#1}}
\newcommand \drriicell [1] {\drrtwocell \omit {#1}}
\newcommand \uriicell [1] {\urtwocell \omit {#1}}
\newcommand \uuriicell [1] {\uurtwocell \omit {#1}}
\newcommand \urriicell [1] {\urrtwocell \omit {#1}}
\newcommand \urrriicell [1] {\urrrtwocell \omit {#1}}


\newtheorem{theorem}{Theorem}
\newtheorem{lemma}{Lemma}
\newtheorem{corollary}{Corollary}
\newtheorem{varexample}{Example}
\newenvironment{example}{\begin{varexample}\em}{\em\end{varexample}}
\newtheorem{definition}{Definition}
\newtheorem{varremark}{Remark}
\newtheorem{proposition}{Proposition}
\newenvironment{remark}{\begin{varremark}\em}{\em\end{varremark}}


\newcommand{\ra}{\mathop{\rightarrow}}
\newcommand{\ralim}{\mathop{\ra}\limits}
\newcommand{\la}{\mathop{\leftarrow}}
\newcommand{\lalim}{\mathop{\la}\limits}
\newcommand{\Longrightarrowlim}{\mathop{\longrightarrow}\limits}
\newcommand{\longrightarrowlim}{\mathop{\longrightarrow}\limits}
\newcommand{\longleftarrowlim}{\mathop{\longleftarrow}\limits}


\newcommand{\opname}[1]{\operatorname{#1}}
\newcommand{\bldsym}[1]{\boldsymbol{#1}}
\newcommand{\catname}[1]{\boldsymbol{\opname{{#1}}}}
\newcommand{\ul}[1]{\underline{#1}}

\newcommand{\inprod}[1]{\left\langle{#1}\right\rangle}
\newcommand{\wquot}{/\!\!/}


\newcommand{\C}{\catname{C}}
\newcommand{\Cop}{\catname{C}^{\opname{op}}}

\newcommand{\Cat}{\catname{Cat}}
\newcommand{\Gpd}{\catname{Gpd}}
\newcommand{\Set}{\catname{Set}}
\newcommand{\Bicat}{\catname{Bicat}}

\newcommand{\Hor}{\catname{Hor}}
\newcommand{\Ver}{\catname{Ver}}
\newcommand{\Squ}{\catname{Squ}}

\newcommand{\iiCob}{\catname{2Cob}}
\newcommand{\nCobi}{\catname{nCob}}
\newcommand{\nCob}{\catname{nCob}_2}
\newcommand{\iiiCob}{\catname{3Cob}_2}

\newcommand{\Span}{\opname{Span}}
\newcommand{\Cspan}{\opname{Span}(\catname{C})}
\newcommand{\Cosp}{\opname{Cosp}}
\newcommand{\CCosp}{\opname{Cosp}(\catname{C})}
\newcommand{\iiCCosp}{\opname{2Cosp}(\catname{C})}

\newcommand{\uispgpd}{\Span(\Gpd)^{U(1)}}

\newcommand{\Ob}{\catname{Obj}}
\newcommand{\M}{\catname{Mor}}
\newcommand{\B}{\catname{2Mor}}

\newcommand{\V}{\catname{Vect}}
\newcommand{\iiV}{\catname{2Vect}}
\newcommand{\Hilb}{\catname{Hilb}}
\newcommand{\iiH}{\catname{2Hilb}}
\newcommand{\ManCorn}{\catname{ManCorn}}

\newcommand{\CG}{\mathbb{C}[G]}
\newcommand{\ZCG}{Z(\mathbb{C}[G])}
\newcommand{\VG}{\catname{Vect}[G]}
\newcommand{\ZVG}{Z(\catname{Vect}[G])}
\newcommand{\ZVS}{Z(\catname{Vect}[SU(2)])}


\newcommand{\Obj}{\opname{Obj}}
\newcommand{\Mor}{\opname{Mor}}
\newcommand{\Path}{\opname{Path}}

\newcommand{\id}{\opname{id}}
\newcommand{\mathd}{\mathrm{d}}
\newcommand{\mathi}{\mathrm{i}}
\newcommand{\mathe}{\mathrm{e}}
\newcommand{\br}[1]{\langle {#1} \rangle}
\newcommand{\Id}{\opname{Id}}

\newcommand{\A}{\mathcal{A}}
\newcommand{\G}{\mathcal{G}}
\newcommand{\fc}[1]{\mathcal{A}_0({#1})_G}
\newcommand{\Z}[1]{\bigl{[} \fc{#1} , \V \bigr{]}}
\newcommand{\HV}[1]{[#1,\V]}
\newcommand{\FV}{\Lambda}


\title{Cohomological Twisting of 2-Linearization and Extended TQFT}
\author{Jeffrey C. Morton}
\address{Dr. Jeffrey Morton, Universität Hamburg, Fakultät für Mathematik, Informatik und Naturwissenschaften, Fachbereich Mathematik, Bundesstraße 55, 20146 Hamburg}
\email{\tt{Jeffrey.Morton@uni-hamburg.de}}

\begin{abstract}
  \addcontentsline{toc}{chapter}{Abstract} In this paper, we describe
  a relation between a categorical quantization construction, called
  ``2-linearization'', and extended topological quantum field theory
  (ETQFT). We then describe an extension of the 2-linearization
  process which incorporates cohomological twisting.  The
  2-linearization process assigns 2-vector spaces to (finite)
  groupoids, functors between them to spans of groupoids, and natural
  transformations to spans between these.  By applying this to
  groupoids which represent the (discrete) moduli spaces for
  topological gauge theory with finite group $G$, the ETQFT obtained
  is the untwisted Dijkgraaf-Witten (DW) model associated to $G$. This
  illustrates the factorization of TQFT into ``classical field
  theory'' valued in groupoids, and ``quantization functors'', which
  has been described by Freed, Hopkins, Lurie and Teleman.  We then
  describe how to extend this to the full DW model, by using a
  generalization of the symmetric monoidal bicategory of groupoids and
  spans which incorporates cocycles.  We give a generalization of the
  2-linearization functor which acts on groupoids and spans which have
  associated cohomological data.  We show how the 3-cocycle $\omega$
  on the classifying space $BG$ which appears in the action for the DW
  model induces a classical field theory valued in this bicategory.
\end{abstract}
\subjclass[2010]{18E10,20L05,57R56}
\keywords{TQFT, groupoids, 2-vector spaces, gauge theory}
\maketitle

\section{Introduction}

This paper demonstrates a construction of an extended topological
quantum field theory (ETQFT), associated to any finite group $G$,
together with an element of the group cohomology, $\omega \in
H^3_{grp}(G,U(1)$. The ETQFT $Z_G$ defined here is related to a
topological gauge theory with finite gauge group $G$, and in
particular to the Dijkgraaf-Witten (DW) model based on $(G,\omega)$.

This construction involves two parts. The first part is a 2-functor
$\FV$, called ``2-linearization'', described by the author in
\cite{gpd2vect}. At the level of objects, the 2-functor $\FV$ takes
finite groupoids to 2-vector spaces.  A ``twisted'' form of $\FV$ is
introduced here, which extends this to a larger 2-category in which
the groupoids carry some cohomological data. The second part is a
functor which, to any space, assigns the groupoid of flat $G$-bundles
with connection. The cohomological data is derived from $\omega$.

There are three main purposes for this paper. First, we show how $\FV$
gives a categorical framework in which to understand some well-known
constructions. Second, we see how these constructions give a
physically-motivated interpretation of $\FV$. Third, we draw on this
motivation to see how to generalize $\FV$ to the more physically
meaningful ``twisted'' case.

Specifically, we first see how $\FV$ can be used to reconstruct the
untwisted case of the 3D TQFT constructed originally by Dijkgraaf and
Witten \cite{DW}, and further developed by Freed and Quinn
\cite{freed-quinn}, as well as elsewhere.  The characterization $\FV$
in terms of ambi-adjoint functors in \cite{gpd2vect} gives a
conceptual account of various normalization factors which appear in
the standard construction, and organizes the physical structure into a
very general structure. In particular, it shows that the DW model
factors into two parts: first, a ``classical field theory'' valued in
a bicategory $\Span(\Gpd)$ whose objects are groupoids and whose
morphisms are ``span'' diagrams; second, a ``quantization functor''
$\FV$.

We are led to a physical interpretation of the 2-linearization functor
$\FV$ as a categorification of the path integrals (in the discrete
case, ``sums over histories'') which are used in the construction of
the DW model. For this reason, and following the terminology of Freed,
Hopkins, Lurie, and Teleman \cite{FHLT}, we often also refer to it as
a ``quantization functor''.

The application to this model leads naturally to a generalization of
$\FV$. Although $\FV$ is a canonical choice for quantization of
$\Span(\Gpd)$, it is unable to express even full generality of the DW
model, itself toy model of a quantum field theory.  In particular, the
general DW model is ``twisted'' by the group cocycle $\omega$
mentioned above, which gives a nontrivial topological action term.
The cocycle is supplied by the classical part of the twisted theory
and can be described in the language of bundles and gerbes
\cite{willerton}.  The cocycle leads to a Lagrangian ``action
functional'', which is a function valued in $U(1)$, and is contant in the
untwisted case.

Extending to this case leads to a larger category $\uispgpd$ of
groupoids and spans equipped with cohomological data, and to a
modification of the quantization functor, $\FV^{U(1)}$, which acts on
this larger category.

This is an example of the program of Freed \cite{freed} on the use of
higher-algebraic structures for quantization, further developed in the
elegant higher-algebraic framework by Freed, Hopkins, Lurie and
Teleman \cite{FHLT}, in which there are two ingredients to an extended
TQFT:
\begin{enumerate} 
\item A "classical" field theory, where the values of the fields live
  in an $n$-groupoid.
\item A "quantization functor" which takes the $n$-groupoids to
  $(n+1)$-algebras and spans to morphisms of all degrees.
\end{enumerate}
That work refers to ``canonical quantization for classical field
theories valued in $n$-groupoids''.  We do not follow this in full
generality - in particular, to reproduce the DW model,
we need consider only the situation down to codimension 2, unlike the
full program of \cite{FHLT}. In particular, our classical field theory
takes values in ordinary groupoids (rather than the more general
setting of $n$-groupoids), equipped with some extra data.

The main result of this paper is a construction of an extended
Topological Quantum Field Theory as a symmetric monoidal 2-functor
from a certain bicategory of double cobordisms into 2-vector spaces,
which reproduces the twisted DW model, in the general case of surfaces
with boundary, as described by Freed and Quinn \cite{freed-quinn}.
The key ingredients are $\fc{-}^{\omega}$, given in Definition
\ref{def:twistedconn}, and $\FV^{U(1)}$, given in Definition
\ref{def:twistedlambda}). The main theorem is:

\textbf{Theorem \ref{thm:maintheorem}}: Given a finite gauge group $G$
and 3-cocycle $\omega \in Z^3(BG,U(1))$, the symmetric monoidal
2-functor
\begin{equation}
  Z_G^{\omega} =  \FV^{U(1)} \circ \fc{-}^{\omega} : \catname{3Cob_2} \ra \iiV
\end{equation}
reproduces the DW model with twisting cocycle $\omega$.

This factorization raises the possibility of applying the same
quantization functor to other examples. The untwisted $\FV$ is a
natural extension of the Baez-Dolan program of ``groupoidification''
(see, e.g. \cite{bhw-gpd, bhw, gpd2vect}), in which
the category $\Span(\Gpd)$ appears in many different contexts. The
twisted variant $\FV^{U(1)}$ extends this further, and we expect that
it should give generalizations of many such examples.

The plan of the paper is as follows: in Section \ref{sec:TQFT} we
recall the categorical setup of extended topological quantum field
theories; in Section \ref{sec:etqft} we describe the untwisted form of
the construction using 2-linearization as the quantization functor and
show that it reproduces the untwisted DW model; in
Section \ref{sec:examples} we give some calculations to explicitly
show some of the invariants computed by this process; in Section
\ref{sec:twisting} we describe the cocycle-twisted variations of the
gauge theory and the 2-linearization functor and show that these
reproduce the twisted DW model; finally we offer some
concluding remarks.

\section{Topological Quantum Field Theories}\label{sec:TQFT}

Here we summarize the context of TQFT and ETQFT in which we will be
working, in categorical language.  We will assume in the following a
basic familiarity with 2-categories, and refer the reader to works
such as those by Cheng and Lauda \cite{chenglauda}, or Lack
\cite{lack-2cat}. For a good introduction on higher category theory in
the context of TQFTs, see work of Baez and Dolan \cite{hdatqft}.

We recall some background on the DW model, to give more context
for the remarks above. Atiyah's axioms for an $n$-dimensional TQFT
describe it as a symmetric monoidal functor
\begin{equation}
  Z : \nCobi \ra \Hilb\
\end{equation} 
where $\nCobi$ is a category whose objects are $(n-1)$-manifolds and
whose morphisms are cobordisms.  In general, a ``$k$-tuply extended
TQFT'' assigns higher-categorical structures called $k$-vector spaces
to manifolds of codimension $k$.  In particular, it is a (weak,
monoidal) $k$-functor:
\begin{equation}
Z : \catname{nCob_k} \ra \catname{k-Vect}
\end{equation}
(The relevance of 2-vector spaces to the setting of topology as in
ETQFT has been described in more detail, for example, by Yetter
\cite{yet}.)  In this paper we are only interested in the case $k =
2$, though the construction given here might be generalized to higher
$k$.

A topological quantum field theory (TQFT) is understood physically as
a quantum field theory with \textit{no local degrees of freedom}.  In
particular, we are interested in TQFTs given by gauge theories.
Fields in gauge theory are connections on bundles over some base
space. We assume such connections are invariant under one-parameter
families of diffeomorphisms, that is, are flat. Thus, the only
interesting information about them is given by holonomies around
non-contractible loops.

\subsection{The Category $\nCobi$}\label{sec:ncob}

In general, a TQFT can be described as a functor from a category of
manifolds and cobordisms into vector spaces (or Hilbert spaces) and
linear maps.  A survey of categorical aspects of TQFT was given by
Bartlett \cite{bartlett}.  For our purposes, we first need to
understand the category of cobordisms involved here.

A \textit{cobordism} between compact manifolds $S_1, S_2$ is a compact
manifold with boundary, $M$, with $\partial M$ isomorphic to the
disjoint union $S_1 \coprod S_2$.  Cobordisms are composed by
identifying their boundaries. For our purposes, it will be useful to
think of cobordisms as special kinds of \textit{cospans} in a category
of manifolds with boundary, so that this is a special case of the
composition of cospans by pushout.  Our aim here is to describe a
generalization of categories of cobordisms.

The bicategory $\nCob$ has:
\begin{itemize}
\item \textbf{Objects} Compact $(n-2)$-manifolds $P$ (without boundary)
\item \textbf{Morphisms}: cobordisms $P_1 \times I \ralim^{i_1} S
  \lalim^{i_2} P_2 \times I$ where $S$ is an $(n-1)$-dimensional
  collared cobordism
\item The 2-morphisms of $\nCob$ are generated by $n$-dimensional
  collared cobordisms with corners $M$, up to diffeomorphisms which
  preserve the boundary
\end{itemize} 
Composition is by gluing along collars, which are included to
ensure that there is a smooth structure on composites.

\begin{remark}
  In the following, all manifolds are compact, whether or not this is
  explicitly mentioned.
\end{remark}

In \cite{dblbicat} there is a definition of a bicategory $\nCob$, as a
sub-bicategory of $\Cosp^2(\ManCorn)$, where $\ManCorn$ is the
category of manifolds with corners.  In fact, the definition given in
\cite{dblbicat} (as a ``double bicategory'') is slightly trickier than
what we will use to define our TQFT. In particular, it is a
\textit{cubical} 2-category, which distinguishes between different
classes 2-morphisms. Specifically, cobordisms of cobordisms are
represented as squares, which we think of as describing ``evolution of
manifolds with boundary'' in which the boundary need not be fixed. If
the source and target are the horizontal edges of a square, the
changing boundary would then be represented by $(n-1)$-dimensional
cobordisms thought of as the vertical edges. Collar-fixing
diffeomorphisms of cobordisms, on the other hand, are ``bigons'' with
a single source and target 1-morphism. When we collapse to a
bicategory, this distinction can be ignored, since any diffeomorphism
has a corresponding ``mapping cylinder'' cobordism.

For our purposes, it is enough to use, as shown in \cite{dblbicat},
the bicategory obtained from the double bicategory described there,
which is equivalent to the usual definition of $\nCob$.  However, we
want to emphasize here that treating the cobordism category explicitly
as a category of cospans means that the ``classical field theory''
functor which we construct is really just induced by a functor on
$\ManCorn$. Specifically, it is induced by the contravariant functor
$\fc{-}$, which is simple to construct, and when applied to cospans of
manifolds with corners, is easily seen to give spans of groupoids.

Details about smooth structure can be largely passed over here.
Indeed, for our construction to work, it is only necessary to get an
$n$-functor from the cobordism category into the appropriate form of
$\Span(\Gpd)$.  Since this is done, here by passing through the
fundamental groupoid, only the homotopy types of the manifolds and
cobordisms are detected by these invariants.  Thus, the precise
details of composing cobordisms with collars is not crucial for these
ETQFTs based on gauge theory.  It may be relevant for other field
theories, however.  TQFT is a special situation, which we now recall.

\subsection{TQFT and ETQFT as Functors}\label{sec:tqftfunctor}

Atiyah's axiomatic formulation \cite{atiyah} of the axioms for a TQFT
can be summarized as follows:
\begin{definition}
 An $n$-dimensional Topological Quantum Field Theory is a
(symmetric) monoidal functor
\begin{equation}
Z: \catname{nCob} \ra \V
\end{equation}
where $\catname{nCob}$ is the monoidal category of $(n-1)$-dimensional
manifolds and $n$-dimensional cobordisms, and $\V$ is the monoidal
category whose objects are vector spaces, whose arrows are linear
transformations, and whose monoidal operation is the usual tensor
product $\otimes$.
\end{definition}

A more general characterization of cobordism categories is the
Baez-Dolan Cobordism Hypothesis, characterizing the $n$-category whose
objects are points and whose $n$-morphisms are $n$-dimensional
cobordisms (necessarily with corners).  The characterization is that
this category is a free symmetric monoidal $n$-category ``with duals''
in a suitable sense (details can be found in \cite{hdatqft}).  This
has been proved by Lurie (see \cite{lurie-cobcat}).  Christopher
Schommer-Pries has given a presentation for $\catname{2Cob_2}$ as a
symmetric monoidal bicategory \cite{csp-class-2d}, given in terms of Morse
theory and a classification of singularities.

This takes us to ETQFTs, which are defined not just on manifolds with
boundary, but also on manifolds with corners. This idea was introduced
by Ruth Lawrence \cite{lawrence}, under the name ``$r$-ETFT'',
replacing the concept of vector space with that of $r$-vector space,
Just as a TQFT assigns a space of states to a manifold and a linear
map to a cobordisms, a (doubly) extended TQFT will assign some such
algebraic data to manifolds of dimension $(n-r)$, and cobordisms up to
dimension $n$. Our construction here will describe the situation where
$r = 2$.

\begin{definition}
The 2-category $\iiV$ has: 
\begin{itemize}
\item \textbf{Objects}: Finite-dimensional Kapranov-Voevodsky 2-vector
  spaces (i.e. $\mathbb{C}$-linear, finitely semisimple abelian
  categories)
\item \textbf{Morphisms}: 2-linear maps (i.e. $\mathbb{C}$-linear
  functors, which are necessarily additive)
\item \textbf{2-Morphisms}: Natural transformations
\end{itemize}
\end{definition}
This is a slight abstraction of the definition given in \cite{KV}.
This category has a natural symmetric monoidal structure, using the
Deligne tensor product of categories.

A straightforward categorification of Atiyah's description of
a TQFT as a functor, as proposed by Lawrence, runs as follows:
\begin{definition}An \textbf{extended TQFT} is a (symmetric monoidal)
  weak 2-functor
  \begin{equation}
    Z : \nCob \ra \iiV
  \end{equation}
\end{definition}

In particular, such a $Z$ assigns:
\begin{itemize}
\item To an $(n-2)$-manifold, a 2-vector space
\item To an $(n-1)$-manifold, a 2-linear map between 2-vector spaces
\item To an $n$-manifold, a 2-natural transformation between 2-linear
  maps
\end{itemize} All this data satisfies the conditions for a weak
2-functor (e.g. it preserves composition and units up to coherent
isomorphism, and so forth). The monoidal structure in $\iiV$ is the
Deligne tensor product on abelian categories (see e.g. section 4.3 of
\cite{KL}).

As before, a fuller version of this theory will use $\catname{2Hilb}$
(see \cite{hdaii}) in place of $\iiV$, but we will mostly omit this
complication here.

\subsection{Topological Gauge Theory and
  TQFTs}\label{sec:topgaugetheory}

Quantum field theories are often derived from classical field theories
described in terms of gauge theory.  In the topological case, a class
of TQFTs and ETQFTs may be constructed from topological gauge theory,
in which fields are flat connections on a manifold $B$. Being flat,
the nontrivial information about a connection depends only on the
topology of $B$.

In particular, all the information available about any connection
comes in the form of holonomies of the connection around loops. The
holonomy is an element $A(\gamma) \in G$ associated to a loop $\gamma$
in $B$, defining the ``parallel transport'' around that loop. The
$G$-connection is flat exactly if the holonomy assigned to a loop
depends only on the homotopy class of $\gamma$.
To say this more conveniently, we first recall the definition (see
Brown \cite{brown}):

\begin{definition}
The \textbf{fundamental groupoid} $\Pi_1(B)$ of a space $B$ is a groupoid
with points of $B$ as its objects, and whose morphisms from $x$ to $y$
are just all homotopy classes of paths in $B$ starting at $x$ and ending
at $y$.
\end{definition}

Suppose $G$ is a group, understood as a one-object groupoid whose
composition is the group multiplication.  Then we have:

\begin{definition}\label{def:gpdconn}
  A \textbf{flat $G$-connection} is a functor
  \begin{equation}
    A : \Pi_1(B) \ra G
  \end{equation}
  A \textbf{gauge transformation} $\alpha : A \ra A'$ from one
  connection to another is a natural transformation of functors so
  that $\alpha_x \in G$ satisfies $\alpha_y A(\gamma) = A'(\gamma)
  \alpha_x$ for each path $\gamma : x \ra y$.  Flat connections and
  natural transformations form the objects and morphisms of the
  \textit{groupoid of flat connections}
  \begin{equation}
    \fc{B} = [ \Pi_1(B) , G ]
  \end{equation}
  The \textbf{groupoid of flat connections functor} is the contravariant functor:
  \begin{equation}
    \fc{-} : \ManCorn \ra \Gpd
  \end{equation}
  which, to $(X \ralim^{f} Y) \in \ManCorn$, assigns the restriction
  map $f^*$, precomposition with $f$.
\end{definition}
(Here we are using the notation that $[ C_1, C_2 ]$ is the category
whose objects are functors from $C_1$ to $C_2$ and whose morphisms are
natural transformations.)

In Section \ref{sec:cobspan} we will see how this extends to a functor
on $Cosp^2(\ManCorn)$, and therefore to $\nCob$, which takes values in
a bicategory of groupoids and spans.

Thurston \cite{thurston} showed that this groupoid of connections is
equivalent, in the categorical sense, to the usual definition in terms
of flat principal $G$-bundles. This is because, as categories, $G
\simeq \catname{G-Tor}$, the category of $G$-torsors (sets with a
$G$-action which are isomorphic as $G$-sets to $G$ itself). A flat
connection on a principal $G$-bundle gives the fiber-selecting functor
from $\Pi_1(B)$ to $\catname{G-Tor}$, where the holonomy along a path
transports each fiber. It is enough to use the groupoid of
connections, since we are only interested in these constructions up to
equivalence, so any representative of the equivalence class.

(We note that, more precisely, the configuration spaces should be seen
as stacks, which are determined by Morita equivalence classes of
groupoids, and consequently everything we construct here is unchanged,
up to equivalence, by taking Morita equivalent groupoids everywhere.
This refinement is important for topological groupoids, but here we
need not be concerned with it, since Morita equivalence and
categorical equivalence are the same for finite groupoids.)

In gauge theory, two connections which are related by a gauge
transformation describe physically indistinguishable states - the
differences they detect are due only to the system of measurement
used. The groupoid then describes the internal symmetry of a
``physical'' space of states.  Now, rewriting definition
\ref{def:gpdconn}, if $\gamma : x \ra x$ in $\Pi_1(B)$ is a loop, and
$A$ and $A'$ are two connections related by a gauge transformation
$\alpha$, the relation between $A$ and $A'$ can be expressed
$A'(\gamma) = \alpha(x)^{-1} A(\gamma) \alpha(x)$, that is, the
holonomies assigned by the two connections around the loop are
conjugate. So physically distinct holonomies correspond to conjugacy
classes in $G$.

Indeed any category is equivalent, as a category, to its skeleton, so
in general $\Pi_1(B) \cong \coprod_{b \in \pi_0(B)} \pi_1(B,b)$. The
gauge transformations for connections measured from a fixed base point
$b$ are determined by a single group element at $b$, acting on
holonomies around any loop by conjugation.  The groupoid $\fc{B}$,
which is the configuration space for our theory, is the ``weak
quotient'' of the space $Fun(\pi_1(B),G)$ of connections by the action
of gauge transformations at each base-point.

\begin{proposition}\label{thm:ZB2VS} 
  For any compact manifold $B$ (possibly with corners), and finite
  group $G$, the groupoid $\fc{B}$ is essentially finite (equivalent
  as a category to a finite groupoid).
\end{proposition}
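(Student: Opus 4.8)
The plan is to reduce the assertion, through a chain of groupoid equivalences, to an elementary finiteness statement about homomorphisms out of a finitely generated group. Recall that a groupoid is \emph{essentially finite} when it is equivalent, as a groupoid, to one with finitely many objects and finite hom-sets. Since $B$ is compact it has only finitely many connected components, and its fundamental groupoid is equivalent to its skeleton; choosing one basepoint $b$ in each component yields $\Pi_1(B) \simeq \coprod_{b \in \pi_0(B)} \pi_1(B,b)$, each $\pi_1(B,b)$ regarded as a one-object groupoid. An equivalence $C \simeq C'$ induces an equivalence of functor categories $[C,G] \simeq [C',G]$, and a functor category out of a coproduct is the corresponding product of functor categories, so
\begin{equation}
  \fc{B} = [\Pi_1(B),G] \;\simeq\; \prod_{b \in \pi_0(B)} [\pi_1(B,b),G].
\end{equation}
A finite product of essentially finite groupoids is essentially finite, so it is enough to handle a single factor $[\Gamma, G]$ with $\Gamma = \pi_1(B,b)$.

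First I would record the one genuinely geometric input: a compact manifold has finitely generated fundamental group, since it is homotopy equivalent to a finite CW complex (for instance via a finite triangulation). Fix generators $\gamma_1, \dots, \gamma_k$ of $\Gamma$. An object of $[\Gamma, G]$ is a group homomorphism $A \colon \Gamma \to G$ (a flat connection), and such an $A$ is determined by the tuple $(A(\gamma_1), \dots, A(\gamma_k)) \in G^k$; hence $[\Gamma, G]$ has at most $|G|^k$ objects. A morphism $A \Rightarrow A'$ is an element $h \in G$ with $h\, A(\gamma) = A'(\gamma)\, h$ for all $\gamma$, and since $G$ is a groupoid every such morphism is invertible. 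The automorphism group of an object $A$ is the centralizer in $G$ of the image subgroup $A(\Gamma)$, a subgroup of the finite group $G$; and the hom-set $[\Gamma,G](A,A')$ is either empty or a torsor over $\mathrm{Aut}(A)$, hence finite in all cases.

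Thus $[\pi_1(B,b),G]$ is literally a finite groupoid, and by the reduction above $\fc{B}$ is equivalent to a finite product of such groupoids, so it is essentially finite. Everything here is formal except the finiteness of $\pi_0(B)$ and the finite generation of $\pi_1(B,b)$; I expect no real obstacle, the only care needed being to invoke the appropriate statement that a smooth compact manifold has the homotopy type of a finite complex.
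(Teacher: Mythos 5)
Your proof is correct and follows essentially the same route as the paper's: decompose $\Pi_1(B)$ into fundamental groups of the finitely many components of the compact manifold $B$, use finite generation of each $\pi_1(B_i)$, and observe that with $G$ finite there are only finitely many functors (determined by images of generators) and finitely many natural transformations (one group element per component). Your version merely spells out a few of the formal steps (functor categories out of coproducts, hom-sets as torsors over centralizers) that the paper leaves implicit.
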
\label{thm:fcBfinite}
\begin{proof}
  To begin with, note that the functor category $\fc{B}$ is indeed a
  groupoid, since any natural transformation $g$ assigns to a point $b
  \in B$ a group element $g_b$, which is invertible. The
  transformation $g^{-1}$ with $g^{-1}_b = (g_b)^{-1}$ is the inverse.

  Next, note that for any space $B$,
  \begin{equation}
    \Pi_1(B) \equiv \coprod_{i=1}^{n}(\pi_1(B_i))
  \end{equation}
  The sum is taken over all path components of $B$. That is, objects
  in $\Pi_1(B)$ are by definition isomorphic if and only if they are
  in the same path component.  The automorphisms for an object
  corresponding to path component $B_i$ are the equivalence classes of
  paths from any chosen point to itself---namely, $\pi_1(B_i)$.  If
  $B$ is a compact manifold, so is each component $B_i$, which is also
  connected. But the fundamental group for a compact, connected
  manifold is finitely generated. So in particular, each $\pi_1(B_i)$
  is finitely generated, and there are a finite number of
  components. So $\Pi_1(B)$ is an essentially finitely generated
  groupoid.

  But if $\Pi_1(B)$ is essentially finitely generated, then since $G$
  is a finite group, $\fc{B}$ is an essentially finite groupoid. This
  is because each functor's object map is determined by the images of
  the generators, and there are finitely many such
  assignments. Similarly, $\Pi_1(B)$ is equivalent to its skeleton,
  and a natural transformation in this case is just given by a group
  element in $G$ for each component of $B$, so there are finitely
  many.
\end{proof}

We have described how to associated the groupoid $\fc{B}$ to any
manifold $B$.  Next we will see how to use this to construct extended
topological quantum field theories.

\section{ETQFT by 2-Linearization}\label{sec:etqft}

Here we want to consider an explicit construction of a class of
extended TQFTs based on a finite group $G$, using the groupoids of
connections  described in the previous section.

In \cite{gpd2vect}, we defined a weak 2-functor $\FV$ from spans of
groupoids to 2-vector spaces.  In particular, the construction we give
here works by associating spans of groupoids to cobordisms, and then
applying this $\FV$. These groupoids arise from the moduli space
of flat connections on the source and target manifolds, as well as on
the cobordism itself. These are connected by natural restriction maps
to give spans.  A similar line of reasoning gives spans of span maps
associated to cobordisms between cobordisms.

We recap the key ingredient $\FV$ next.

\subsection{Groupoidification and 2-Linearization}

\textit{Groupoidification} is a term for the (non-systematic) process
of finding an inverse to the (systematic) ``degroupoidification''
functor, which gives representations of $\Span(\Gpd)$ in $\V$, or
$\Hilb$. The goal is to find structures in $Span(Gpd)$ whose
representations reproduce some chosen structure in $\V$ or $\Hilb$.
The reader may find more details on this program in a review by Baez,
Hoffnung and Walker \cite{bhw-gpd}, and in Hoffnung's work on
geometric representation theory \cite{hoffnung-hecke}.  The author has
described an example of an application to physics, and in particular
the combinatorics of Feynman diagrams in \cite{catalgQM}.

\textit{2-Linearization}: the 2-functor $\Lambda$ gives a
representation of $\Span(\Gpd)$ in 2-vector spaces. This is discussed
in the general setting in \cite{gpd2vect}. 

Both invariants rely on different forms of a ubiquitous
\textbf{pull-push} process, the best-known example of which is
ordinary matrix multiplication. Indeed the relation to matrix
multiplication is exactly the basis for the degroupoidification
functor.  In the context of the 2-linearization functor $\FV$, the
``pull'' and ``push'' refer to the direct and inverse limits of
$\V$-presheaves along a functor. This fundamental construction appears
in many categories, notably toposes \cite{macmoer}. For abelian
sheaves this is described in some generality by Kashiwara and Schapira
\cite{kasch}. The situation most closely relevant to the present case
occurs in the setting of representations of rings \cite{benson}.

For our immediate purposes, we can omit many of these considerations,
but note that the ambidextrous (i.e. both left and right) adjunction
between direct and inverse image functors valued in $\V$ gives us the
extra structure used to construct $\FV$. This ambidextrous adjunction
appears, indirectly, because a finite-dimensional vector space is
canonically isomorphic to its double-dual. (For this reason, in
infinite-dimensional situations, one properly ought to use
$\Hilb$-valued functors, which may be seen as equivariant Hilbert
bundles.  For the finite case, $\V$ is sufficient.)

\subsection{The 2-Linearization Functor $\FV$}\label{sec:iilin}

The category $Rep(X)$ of finite dimensional representations of a
groupoid $X$ is the category of $X$-actions on bundles of vector
spaces over the objects of $X$. In the essentially finite, discrete
context we are working in, this just the same as the category of
functors from $X$ into $\V$, denoted $[X,\V]$. Such a representation
category is a 2-vector space (i.e. a $\V$-enriched abelian category).

The construction of $\FV$ uses the fact that, for any functor $f : X
\rightarrow Y$ of essentially finite groupoids, there is an adjunction
\begin{equation}
\xymatrix{
  Rep(X) \ar@<2pt>[r]^{f_{\ast}} & Rep(Y) \ar@<2pt>[l]^{f^{\ast}}
}
\end{equation}

We describe these as $f^{\ast}$ (``pull'') and $f_{\ast}$ (``push'')
between the 2-vector spaces of functors $[X,\V]$ and $[Y,\V]$. In
fact, this adjunction is ``ambidextrous'': $f_{\ast}$ is both a left
and a right adjoint to $f^{\ast}$. The importance of ambidextrous
adjunctions is discussed by Lauda \cite{laudaambidjunction} from the
algebraic point of view which relates 2D TQFTs to Frobenius algebras.

The effect of $\FV$ on
2-morphisms can also be thought of in terms of a ``pull-push''
process, but here we use the unit and counit from the two adjunctions
between $f^{\ast}$ and $f_{\ast}$. In particular, we use the unit
from the adjunction where $f_{\ast}$ is a right adjoint, and the
counit from the adjunction where it is a left adjoint. We denote the
unit:
\begin{equation}
\eta_R : \Id_{[X,\V]} \Longrightarrow f_{\ast} f^{\ast}
\end{equation}
The counit is similarly denoted:
\begin{equation}
\epsilon_L : f_{\ast} f^{\ast} \Longrightarrow \Id_{[X,\V]}
\end{equation}

\begin{remark}
We note here that these two operations are a special case of the
general ``six-operation'' framework \cite{moer-groth}: in algebraic
geometry, for a map $f: X \rightarrow Y$ of varieties (or schemes),
one gets functors $f^{\ast}$, $f_{\ast}$, $f^{!}$ and $f_{!}$ between
categories of sheaves $Sh(X)$ and $Sh(Y)$. This is a special case,
since we take our groupoids to have discrete topology, so all functors
(as presheaves) are sheaves. Furthermore, $f^{\ast}$ has in the
general case a different left adjoint $f_{\ast}$ and right adjoint
$f_{!}$.  However, in this case, the two adjoint pairs of functors
coincide.  This is due, indirectly, to the fact that objects in $\V$
are canonically isomorphic to their double-duals, as can be seen by
the matrix representation of these 2-linear maps.
\end{remark}

One way to summarize the structure we get uses a certain bicategory of
spans of groupoids:

\begin{definition}The symmetric monoidal bicategory $\Span(\Gpd)$ has:
\begin{itemize}
\item \textbf{Objects}: Essentially finite groupoids
\item \textbf{Morphisms}: Spans of groupoids
\item \textbf{2-Morphisms}: Equivalence classes of spans of span maps
\end{itemize}
The monoidal operation for $\Span(\Gpd)$ is determined by the fact
that, on objects, it is the product in $\Gpd$.
\end{definition}

This generalizes a construction of a bicategory whose morphisms are
spans, and whose 2-morphisms are span maps.  In fact, $\Span(\Gpd)$ as
we have presented it might be better understood a 3-category.  In
general, the span construction on any bicategory will yield a
(monoidal) tricategory, where the 3-morphisms are maps of spans of
span maps, as described by Hoffnung \cite{hoffnung-span}.  Reducing to
3-isomorphism classes gives exactly the 2-morphisms described here,
and makes our $\Span(\Gpd)$ a (symmetric) monoidal bicategory.  We have chosen the
current approach because of the up-to-diffeomorphism definition of
2-morphisms in $\nCob$.

For a category $\catname{C}$ with pullbacks, the $\Span(\catname{C})$
has many useful properties due to certain universal properties of the
span construction \cite{unispan} (for bicategories, a similar analysis
is done in \cite{kenney-pronk}).  For example, taking categories of
spans ensures that every morphism has a ``dual'' (the same span,
considered with the opposite orientation), and is a minimal expansion
of $\catname{C}$ with this property. The point of the following
construction is to take these ``duals'' and represent them as
ambi-adjoint functors.

Thus, it was shown \cite{gpd2vect} that the following defines a
2-functor:

\begin{definition}
The weak 2-functor
\begin{equation}\label{eq:FVdef}
\FV : \Span(\Gpd) \ra \iiV
\end{equation}
assigns:
\begin{itemize}
\item For $X$ an essentially finite groupoid, the functor category
  $\FV{X} = \HV{X}$
\item For a span of groupoids $A \stackrel{s}{\la} X \stackrel{t}{\ra}
  B$ in $\Span (\Gpd)$, the 2-linear map:
  \begin{equation}\label{eq:FVmor}
    \FV{X} = t_{\ast} \circ s^{\ast} : \FV{A} \longrightarrow \FV{B}
  \end{equation}
\item For a span between spans, $Y : X_1 \ra X_2$ for $X_1,X_2 : A \ra B$, as in:
\begin{equation}\label{eq:FV2mor}
\xymatrix{
 & X_1 \ar[dl]_{s_1} \ar [dr]^{t_1} & \\
A & Y \ar[u]^{s} \ar[d]_{t} & B  \\
 & X_2 \ar[ul]^{s_2} \ar [ur]_{t_2} & \\
}
\end{equation}
the natural transformation
\begin{equation}\label{eq:FV2mor-nakayama}
  \FV(Y) = \epsilon_{L,t} \circ N \circ \eta_{R,s} : (t_1)_{\ast} s_1^{\ast} \Longrightarrow (t_2)_{\ast} s_2^{\ast}
\end{equation}
where $\epsilon_{L,t}$ is the counit for the left adjunction
associated to $t$, and $\eta_{R,s}$ is the unit for the right
adjunction associated to $s$, and $N$ is the Nakayama isomorphism
between the left and right adjoints.
\end{itemize}
\end{definition}

We note that
$\FV$ is a weak 2-functor, so there are also natural isomorphisms
called the ``compositor''
\begin{equation}
 \beta : \FV( X' \circ X ) \ra \FV(X') \circ \FV(X)
\end{equation}
for each composable pair of spans $X$ and $X'$, and the ``unitors''
\begin{equation} 
  U_B : 1_{\FV(B)} \stackrel{\sim}{\ra} \FV(1_B)
\end{equation} for each groupoid $X$. These are described
in \cite{gpd2vect} in detail. So, briefly, is the case where the
2-morphism diagram is only required to commute up to
isomorphism. These issues will not be required in the current context.

The role of the Nakayama isomorphism here is also described in more
detail in \cite{gpd2vect}, but is relevant here, so we will briefly
recap this. In general, given a map $f : X \ra Y$, there will be
both a left and a right adjoint to $f^{\ast}$, the pullback of (in
this case, $\V$-valued) functors from $Y$ to $X$. These may be
described in terms of the internal $\hom$ and $\otimes$ in $\V$.

In each case, these ``pushforwards'' of a functor $F : X \ra \V$ to
$Y$ will be described as a direct sum over all objects $x$ in the
essential preimage of $y \in Y$. Since $F(x)$ gives a representation
of $Aut(x)$, the summands are the induced representations along the
associated homomorphism $\hat{F} : Aut(x) \ra Aut(y)$. For the left
adjoint, this is $\mathbb{C}[Aut(y)] \otimes_{\mathbb{C}[Aut(x)]}F(x)$
(a representation of $Aut(y)$), and for the right adjoint it is
$\hom_{\mathbb{C}[Aut(x)]}(\mathbb{C}[Aut(y)],F(x))$ (that is, the
hom-space as $\mathbb{C}[Aut(x)]$-modules). The Nakayama isomorphism
turns a map in the right adjoint (hom-space) to a vector in the left
adjoint (tensor product) by the ``exterior trace'', averaging over a
group action:
\begin{equation}\label{eq:naka-avg}
  \phi \mapsto \frac{1}{\# Aut(x)} \sum_{g \in Aut(y)} g^{-1} \otimes \phi(g)
\end{equation}

This gives the natural transformations associated to 2-morphisms by
2-linearization. Note that we sum over $Aut(y)$, which projects to the
$Aut(y)$-invariant subspace, a canonical representative of a vector in
the tensor product, but the ``average'' is given by dividing by the
size of $Aut(x)$. This reflects the fact that we are pushing forward a
representation of $Aut(x)$, and it is necessary to make this an
isomorphism when we are dealing with modules in general, say over
$\mathbb{Z}$, rather than $\mathbb{C}$-vector spaces. In this setting,
it merely sets a canonical scale, which turns out to reproduce the
groupoid cardinality which appears in the groupoidification process of
Baez and Dolan (see e.g. \cite{bhw}).

The 2-linearization construction relies on the fact that having both
covariant and contravariant functors $(-)^{\ast}$ and $(-)_{\ast}$
amounts to the same thing as having a single functor from
$\Span(\Gpd)$. In general, pairs of functors like this satisfying some
nice properties are \textit{Mackey functors} (see \cite{elango,
  greenlees}). The situation is in general somewhat more complicated
when groupoids are thought of as having topological spaces, rather
than discrete sets, of objects and morphisms.  However, we take
advantage of the simplifying fact for the discrete case to construct
an ETQFT for a discrete gauge group $G$. We describe this in the next
section.

\subsection{From Cobordisms to Spans}\label{sec:cobspan}

In this section, we show the functoriality of the classical field
theory part of the factorization $Z_G = \FV \circ \fc{-}$.

\begin{theorem}There is a 2-functor:
  \begin{equation}
    \fc{-} : \nCob \ra \Span(\Gpd)
  \end{equation}
  induced by the groupoid of flat connections functor of Definition
  \ref{def:gpdconn}
\end{theorem}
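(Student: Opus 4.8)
\quad
The plan is to exhibit $\fc{-}$ as a composite of two functorial operations: the fundamental groupoid $\Pi_1(-)$, covariant on spaces, followed by the representation functor $[-,G]$, contravariant on groupoids, so that $\fc{-}=[\Pi_1(-),G]$ is covariant. On objects this is precisely the assignment $P\mapsto\fc{P}$ constructed in Section~\ref{sec:topgaugetheory}, which by Proposition~\ref{thm:ZB2VS} takes values among the essentially finite groupoids, i.e.\ the objects of $\Span(\Gpd)$; everything else is to be ``induced'' from this. Throughout I will use the deformation retraction $P\times I\simeq P$ of a collar, which gives canonical equivalences $\fc{P\times I}\simeq\fc{P}$ natural in $P$.

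On a $1$-morphism of $\nCob$, namely a cobordism presented as a cospan $P_1\times I\xrightarrow{i_1}S\xleftarrow{i_2}P_2\times I$ in $\ManCorn$, I first apply $\Pi_1$ to get a cospan of groupoids and then $[-,G]$, whose contravariance turns this into a span $\fc{P_1\times I}\la\fc{S}\ra\fc{P_2\times I}$; composing with $\fc{P_i\times I}\simeq\fc{P_i}$ yields a span $\fc{P_1}\la\fc{S}\ra\fc{P_2}$, whose legs are the restriction-of-connections functors. On $2$-morphisms there are two kinds of generators. A collar-fixing diffeomorphism $S\to S'$ induces an isomorphism $\Pi_1(S)\cong\Pi_1(S')$ commuting with the inclusions $i_j$, hence an isomorphism of spans, i.e.\ (the isomorphism class of) a span of span maps. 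An $n$-dimensional collared cobordism with corners $M:S_1\to S_2$ is itself a cospan $S_1\times I\ra M\la S_2\times I$ over $P_1$ and $P_2$; applying $\Pi_1$ then $[-,G]$ produces exactly the span-of-span-maps diagram \eqref{eq:FV2mor} with apex $\fc{M}$, and passing to isomorphism classes gives a $2$-morphism of $\Span(\Gpd)$.

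The substance is functoriality, and this is where the van Kampen theorem for the fundamental groupoid (Brown~\cite{brown}) enters. Composition in $\nCob$ is gluing along collars, realising the composite $S'\circ S$ as a pushout $S\cup_{P_2\times I}S'$ in $\ManCorn$; the collars provide precisely the open neighbourhoods that the groupoid van Kampen theorem requires, so $\Pi_1(S'\circ S)$ is the homotopy pushout $\Pi_1(S)\cup_{\Pi_1(P_2\times I)}\Pi_1(S')$. Since $[-,G]$ is an internal hom and hence carries homotopy pushouts of groupoids to homotopy pullbacks, $\fc{S'\circ S}$ is equivalent to $\fc{S}\times_{\fc{P_2\times I}}\fc{S'}\simeq\fc{S}\times_{\fc{P_2}}\fc{S'}$, which is exactly the composite span in $\Span(\Gpd)$; this canonical equivalence is the compositor $\beta$, and $\fc{1_P}=\fc{P\times I}\simeq\fc{P}$ is the unitor. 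The same pushout-to-pullback mechanism handles vertical and horizontal composition of $2$-morphisms and the interaction of the diffeomorphism-generated $2$-morphisms with the cobordism-generated ones; and, using $\Pi_1(P_1\sqcup P_2)=\Pi_1(P_1)\sqcup\Pi_1(P_2)$ together with $[-,G]$ sending coproducts to products, it also shows that $\fc{-}$ is symmetric monoidal for disjoint union and cartesian product (more than the theorem asks). It then remains only to check that $\beta$ and the unitor obey the coherence axioms of a weak $2$-functor, which is a formal consequence of the naturality of the van Kampen equivalences in all the manifolds involved together with the universal properties of pushout and pullback.

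I expect the main obstacle to be not any single step but the bookkeeping that makes these weak (homotopy) pushouts and pullbacks fit together coherently: one must check that the van Kampen equivalence is natural and ``pseudo-functorial'' enough for the induced compositors to satisfy the $2$-functor coherence, and that the statement ``$[-,G]$ sends homotopy pushouts of groupoids to homotopy pullbacks'' is compatible with the specific model of span composition (iso-comma / weak pullback) used to define $\Span(\Gpd)$. Once the homotopy-pushout description of $\Pi_1$ of a glued cobordism is pinned down and the collar equivalences are tracked carefully, the rest is diagram-chasing with no further geometric input.
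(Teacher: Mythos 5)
Your proposal follows essentially the same route as the paper's own proof: factor $\fc{-}$ as $[\Pi_1(-),G]$, use Brown's groupoid van Kampen theorem to identify $\Pi_1$ of a glued cobordism with a weak pushout, use that the contravariant $[-,G]$ carries weak pushouts to weak pullbacks (hence composite spans), and handle 2-morphisms up to isomorphism so that strict (iso)pullbacks suffice. Your treatment of collars via the retraction $\fc{P\times I}\simeq\fc{P}$ and your explicit attention to the compositor/unitor coherence are slightly more detailed than the paper's, but the argument is the same.
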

\begin{proof}
  This will follow from the fact that there is an inclusion $\nCob \ra
  \Cosp_2(\ManCorn)$, the bicategory derived from $Cosp^2(\ManCorn)$ as
  in \cite{dblbicat}.

  A cobordism in $\nCob$ can be seen as a particular 1-morphism
  (cospan) in the bicategory $Cosp_2(\ManCorn)$, given by inclusion
  maps:
  \begin{equation}\label{eq:zstep0}
    \xymatrix{
      & S & \\
      B \ar[ur]^{\iota} & & B' \ar[ul]_{\iota'} \\
    }
  \end{equation}
  (Note that as described in \cite{dblbicat}, there are ``collars''
  associated with these inclusions, but these have no effect on our
  construction so we shall ignore them here.)

  Given two cobordisms $S : B_1 \rightarrow B_2$ and $S' : B_2
  \rightarrow B_3$, the composite $S' \circ S$ is a (homotopy) pushout
  of two cospans (over $B_2$). The functor $\Pi_1$ also gives cospans
  of the fundamental groupoids, whose composite $\Pi_1(S') \circ
  \Pi_1(S)$ is a (weak) pushout. Then it is a well-known consequence
  of Brown's \cite{brown} groupoid version of the Van Kampen
  theorem that $\Pi_1(S' \circ S) \simeq \Pi_1(S') \circ \Pi_1(S)$
  (see also \cite{higgins}).

  In the next step, we apply $[ - , G] : Cosp(\Gpd)
  \rightarrow \Span(\Gpd)$. So at this stage of the construction we
  have a span.  To see that this operation is compatible with
  composition of cobordisms the essential fact is that the
  contravariant functor $[-,G]$ takes weak (homotopy) pushouts to weak
  pullbacks.

  The composition of morphisms is by weak pushout of (collared)
  cospans in $\catname{ManCorn}$. This still holds when we take
  fundamental groupoids.  Applying $[-,G]$, we get spans of
  groupoids. Thus the corresponding diagram of spans contains a (weak)
  pullback square.  Denoting the pull-backs along the inclusions by
  $p_i$ and $p'_i$, we have this diagram:

  \begin{equation}\label{xy:conncompos}
    \xymatrix{
      & & \fc{S'} \circ \fc{S} \ar_{P_S}[ld]\ar^{p_{S'}}[rd]\ar@/_2pc/_{p_1 \circ P_S}[ddll]\ar@/^2pc/^{p'_2 \circ P_{S'}}[ddrr] & & \\
      & \fc{S}\ar^{p_1}[ld]\ar_{p_2}[rd] \ar@{=>}[rr]^{\alpha}_{\sim} & & \fc{S'}\ar^{p'_1}[ld]\ar_{p'_2}[rd] & \\
      \fc{B_1} & & \fc{B_2} & & \fc{B_3} \\
    }
  \end{equation}

  The weak pullback $\fc{S'} \circ \fc{S}$ is canonically described
  (up to equivalence) as a comma category, whose objects are triples
  $A,f,A'$ where $A \in \fc{S}$, $A' \in \fc{S'}$, and $f : p_2(A)
  \rightarrow p'_1(A')$.  That is, for connections $A$ and $A'$ on $S$
  and $S'$, the restrictions to $B_2$ are gauge equivalent, but not
  necessarily equal. Each different gauge equivalence gives a
  different object of $\fc{S'} \circ \fc{S}$.

  Thus, $\fc{S'} \circ \fc{S} \simeq \fc{S' \circ S}$, where
  composition of cobordisms is by the (homotopy) pushout along the
  collared inclusions of the boundary $B_2$.

  A similar argument shows the unitor property for $\fc{-}$.

  Finally, for 2-morphisms, we note that here, composition is by
  strict pullback and pushout since spans of spans are taken only up
  to isomorphism. Otherwise the same argument holds. Thus, we have a
  2-functor into $\Span(\Gpd)$.
\end{proof}

\begin{remark}
  We also note here that a similar construction to the functor $[-,G]$
  used here plays a role in a construction by Grandis \cite{grandis3}
  of TQFT via spans of sets. That construction works with topological
  spaces, rather than $\ManCorn$, and $[-,S]$ then denotes the functor
  which assigns to a space $A$ the set of homotopy classes of maps
  from $A$ into $S$. That is, here we are concerned with the homotopy
  1-type (a groupoid), rather than a 0-type (a set) of the space of
  maps into $S$. The result we need is shown in the general case of
  spaces by Chorny \cite{chorny}, and the groupoid case follows since
  groupoids are homotopy 1-types of spaces.
\end{remark}

\subsection{Extended TQFT via $\FV$}

We can now describe explicitly how our ETQFT is constructed:

\begin{definition}
  For any finite group $G$, define the 2-functor
  \begin{equation}
    Z_G = \FV \circ \fc{-} : \nCob \ra \iiV
  \end{equation}
\end{definition}

\begin{proposition}
This $Z_G$ is an Extended TQFT.
\end{proposition}
\begin{proof}
  Since both $\fc{-}$ and $\FV$ are weak, symmetric monoidal
  2-functors, so is the composite $Z_G$, so this is indeed an Extended
  TQFT.
\end{proof}

In particular, since by Proposition \ref{thm:fcBfinite} $\fc{B}$ is an
essentially finite groupoid, the main theorem of \cite{gpd2vect} then
implies $\Z{B}$ is a Kapranov-Voevodsky 2-vector space.

\begin{remark}
  To describe it explicitly, given a finite group $G$, the extended
  TQFT $Z_G$ makes the following assignments:
  \begin{itemize}
  \item For a closed compact manifold $B$, $Z_G$ assigns the 2-vector
    space:
    \begin{equation}\label{eq:ZGonB}
      Z_G(B) = \Z{B}
    \end{equation}
  \item For a cobordism between manifolds:
    \begin{equation}
      B \ralim^{i} S \lalim^{i'} B'
    \end{equation}
    the weak 2-functor assigns a 2-linear map:
    \begin{equation}\label{eq:ZGonS}
      Z_G(S) = (p')_{\ast} \circ p^{\ast}
    \end{equation}
    where $p$ and $p'$ are the projections for the span of
    groupoids associated to $S$ by $\fc{-}$.
  \item For a cobordism with corners between two cobordisms with the
    same source and target:
    \begin{equation}
      \xymatrix{
        & S_1 \ar[d]_{i} & \\
        B \ar[ur]^{i_1} \ar[dr]_{i_2} & M & B' \ar[ul]_{i'_1} \ar[dl]^{i'_2} \\
        & S_2 \ar[u]^{i'} & \\
      }
    \end{equation}
    the natural transformation (\ref{eq:FV2mor-nakayama}) becomes:
    \begin{equation}\label{eq:2morcobfmla}
      \FV(M) = \epsilon_{L,p'} \circ N \circ \eta_{R,p} : (p'_1)_{\ast} \circ {p_1}^{\ast} \Longrightarrow (p'_2)_{\ast} \circ p_2^{\ast}
    \end{equation}
  \end{itemize}
  where $p'$ and $p$ are as above. The coherence isomorphisms which
  make $Z_G$ a weak 2-functor are those defined by $\FV$ as in
  \cite{gpd2vect}. (These appear in coordinates as matrices whose
  components are linear maps between the coefficients of the 2-linear
  maps).
\end{remark}

\subsection{The 3D Untwisted Dijkgraaf-Witten Model as
  ETQFT}\label{sec:DWmodel}

Now we will consider the three-dimensional case in particular. Given a
gauge group $G$, the DW model \cite{DW} is a
topological gauge theory, involving flat $G$-connections on
manifolds. For Lie groups, this theory is related to the Chern-Simons
theory, but our interest here is for finite groups. The general theory
of Lie groups can be understood from finite groups and simply
connected Lie groups. This is because, as described by \cite{DW}, the
finite groups occur in exact sequences as either the group of
components, or the fundamental group, of Lie groups (which can thus be
used to reduce a general Lie group first to a connected, then a
simply-connected, one).

TQFTs equivalent to the DW model are often defined as invariants for
triangulated manifolds, found by considering compatible $G$-colorings
of the (directed) edges. This is done, for example, by Yetter
\cite{yettqft}, and in discussion in the chapter on TQFT of the
unpublished notes by Porter \cite{menagerie}, which also discuss an
extension to categorical groups. Yetter showed that one can obtain an
invariant of manifolds which is independent of triangulation via a
colimit over all triangulations.

While triangulations are crucial in the case of categorical groups,
for ordinary groups, the $G$-colorings of edges in a triangulation
amount to flat $G$-connections. These can be described in terms of
maps in $Hom(\pi_1(M),G)$, or equivalently in $Hom(M,BG)$, where $BG$
is the classifying space of $G$. We use the former description here,
since the groupoid structure is easiest to see in that form.

Note that the normalizing factors which appear in the 2-functor $\FV$
as the Nakayama isomorphism (\ref{eq:naka-avg}) count the size of
automorphism groups of objects. In the mapping space $Hom(M,BG)$,
these appear as the size of homotopy groups of connected components
(as in the ``homotopy order'' as described in \cite{menagerie}). The
homotopy order of a connected space $X$ with base-point $x$ and only
finitely many nontrivial homotopy groups is:
\begin{equation}
  \#^{\pi}(X,x) = \prod_{i=1}^{\infty}|\pi_i(X,x)^{(-1)^i}|
\end{equation}
(In the case of $BG$, and $Hom(M,BG)$, all homotopy groups for $i \geq
2$ are trivial, so this reproduces the groupoid cardinality.)

We now consider explicitly how the DW model and a natural ETQFT
extension of it can be found from the 2-functor $Z_G$ defined in this
paper.

Recall that the category $\iiCob$ occurs in $\iiiCob$ as the category
of automorphisms of the object $\emptyset$, which is particularly
interesting since $\emptyset$ is the monoidal unit in $\iiiCob$. We
can ask about the effect of $Z_G$ restricted to this automorphism
category. It turns out to be just the same as the DW model in 3
dimensions.

Our main theorem is the following:

\begin{theorem}\label{thm:DWETQFT}There is a natural isomorphism
  between the functor $Z_G|_{Aut(\emptyset)}$ and the untwisted DW model.
\end{theorem}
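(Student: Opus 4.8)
The plan is to unwind both sides of the claimed isomorphism into concrete, comparable data, and then exhibit the identification term by term. On the one side, $Z_G$ restricted to $\mathrm{Aut}(\emptyset)$ in $\iiiCob$: objects of this automorphism category are the endomorphisms of $\emptyset$, which are closed compact $1$-manifolds $\Sigma$ (disjoint unions of circles), and morphisms are $3$-dimensional cobordisms-with-corners $M$ between them, viewed as $2$-morphisms in $\iiiCob$ from $1_\emptyset$ to $1_\emptyset$ factoring through $\Sigma$. Applying $Z_G = \FV \circ \fc{-}$, we get: to $\Sigma$ the $2$-vector space $\Z{\Sigma} = [\fc{\Sigma},\V]$, where $\fc{\Sigma} = [\Pi_1(\Sigma),G]$ is the groupoid of flat $G$-connections (equivalently $\mathrm{Hom}(\pi_1(\Sigma),G)/\!/G$, a disjoint union over the circle components of the action groupoid $G/\!/G$); and to a $3$-manifold $M: \Sigma_1 \to \Sigma_2$, the natural transformation given by formula (\ref{eq:2morcobfmla}), i.e. $\epsilon_L \circ N \circ \eta_R$ built from the restriction span $\fc{\Sigma_1} \la \fc{M} \ra \fc{\Sigma_2}$.

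On the other side, I would write down the DW model in dimension $3$ in its standard Freed–Quinn form: it assigns to a surface $\Sigma$ the vector space (or $2$-vector space, in the extended version) of functions/sections on the groupoid $\fc{\Sigma}$ of flat connections, and to a $3$-cobordism $M$ the linear map given by the ``sum over histories'' $\sum_{[A]\in\pi_0\fc{M}} \frac{1}{\#\mathrm{Aut}(A)}(\cdots)$ — the pull-push along the restriction maps $\fc{\Sigma_1}\la\fc{M}\ra\fc{\Sigma_2}$, with groupoid-cardinality weights. Since the DW model in $3$d (untwisted) assigns vector spaces to \emph{surfaces} while our restriction to $\mathrm{Aut}(\emptyset)$ assigns them to $1$-manifolds, the bookkeeping needs care: the appropriate comparison is between $Z_G|_{\mathrm{Aut}(\emptyset)}$ and the \emph{extended} DW model of Freed (the one that descends to codimension $2$), which is exactly the form quoted in the abstract and in Section~\ref{sec:DWmodel}; so I would state the comparison at that level, noting that $\iiCob$ sits inside $\iiiCob$ as $\mathrm{Aut}(\emptyset)$ and that the classical data $\fc{-}$ agrees on the nose with the configuration groupoids used in \cite{freed-quinn}.

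The key steps, in order: (1) identify $\mathrm{Aut}_{\iiiCob}(\emptyset)$ explicitly as (equivalent to) $\iiCob$ extended by corners, so that an object is a $1$-manifold, a morphism a surface, a $2$-morphism a $3$-manifold; (2) compute $Z_G$ on objects and check it matches the DW state space — here the content is just that $\fc{\Sigma}=[\Pi_1(\Sigma),G]$ is the DW configuration groupoid and $[\fc{\Sigma},\V]$ is its category of representations, which in the $2d$/$3d$ DW literature is the assigned ``$2$-vector space of states''; (3) compute $Z_G$ on a generic cobordism $S$ (a surface) as $(p')_\ast\circ p^\ast$ and identify this $2$-linear map with the DW transition map, using that $\FV$'s push $t_\ast$ is the induced-representation functor weighted so as to reproduce groupoid cardinality (as recalled in the discussion of the Nakayama isomorphism); (4) compute $Z_G$ on a $3$-manifold via (\ref{eq:2morcobfmla}) and match it with the DW partition-function-style sum over flat connections on $M$ — invoking that the Nakayama/exterior-trace normalization is exactly the $1/\#\mathrm{Aut}$ weighting of Freed–Quinn; (5) assemble these compatible identifications into a monoidal natural isomorphism of $2$-functors, checking that the compositors and unitors of $\FV$ correspond to the gluing axioms of the DW model (this is where Van Kampen, already used in the proof that $\fc{-}$ is a $2$-functor, does the work).

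The main obstacle I anticipate is step (4) together with the normalization matching: one has to verify that the specific combination $\epsilon_{L}\circ N\circ \eta_{R}$ — with $N$ the exterior-trace averaging $\phi\mapsto \frac{1}{\#\mathrm{Aut}(x)}\sum_{g}g^{-1}\otimes\phi(g)$ — produces precisely the groupoid-cardinality-weighted sum over $\pi_0\fc{M}$ that appears in the Freed–Quinn definition, with \emph{no} discrepancy of overall scale. This is subtle because DW/Freed–Quinn normalizations are notoriously convention-dependent (factors of $\#G$, $\#H^0$, $\#H^1$ floating around), and the claim of a \emph{natural isomorphism} (as opposed to mere equality) means I should be prepared for the identification to involve a nontrivial but canonical rescaling on each state space — e.g. the isomorphism $[\fc{\Sigma},\V]\to(\text{DW space})$ might itself carry the groupoid-cardinality normalization, in which case naturality has to be checked against all $S$ and $M$. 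I would handle this by first pinning down one explicit convention for the DW model (that of \cite{freed-quinn}), then tracing the $\FV$ recipe on the standard generators of $\iiiCob$ (pair of pants, cap, cylinder, and the elementary $3$-dimensional handle attachments / Morse moves), and invoking the generators-and-relations presentation to conclude; the coherence of $\FV$ as a weak $2$-functor, established in \cite{gpd2vect}, then upgrades the generator-wise match to the full natural isomorphism.
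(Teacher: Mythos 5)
There is a genuine gap, and it starts at the very first step: you have misidentified the category $Aut(\emptyset)$. In $\iiiCob$ the objects are $1$-manifolds, the $1$-morphisms are $2$-dimensional cobordisms with corners, and the $2$-morphisms are $3$-dimensional cobordisms; so the endomorphism category of the monoidal unit $\emptyset$ (the empty $1$-manifold) has as its \emph{objects} the $2$-dimensional cobordisms $\emptyset \ra \emptyset$, i.e.\ closed surfaces $\Sigma_g$, and as its \emph{morphisms} the $3$-dimensional cobordisms between such surfaces. Your reading---objects of $Aut(\emptyset)$ are circles, morphisms are $3$-dimensional cobordisms ``between'' them---is off by one dimension and internally inconsistent, and it forces the pivot you then make: since circles receive $2$-vector spaces, you decide the ``appropriate comparison'' is with Freed's fully extended DW model down to codimension $2$. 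But that changes the statement. The whole point of restricting to $Aut(\emptyset)$ is that this restriction is an ordinary functor (closed surfaces $\mapsto$ objects of $Hom_{\iiV}(\V,\V)$, $3$-cobordisms $\mapsto$ morphisms there), which can be compared directly with the untwisted DW model in its original, non-extended form; your steps (2) and (3), about $Z_G(S^1)$ and the pair of pants, concern data that does not lie in $Aut(\emptyset)$ at all, and your step (5) invokes a generators-and-relations presentation of the $3$-dimensional cobordism bicategory (Morse moves, handle attachments) that is neither needed nor available here.

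The actual argument is a direct termwise computation, with no recourse to generators. One has $Z_G(\emptyset)\simeq \V$, whose single basis object is the trivial representation of the trivial groupoid. A closed surface $\Sigma_g$, viewed as the span $\catname{1} \la \fc{\Sigma_g} \ra \catname{1}$, is sent to a $2$-linear map $\V \ra \V$, i.e.\ a vector space; its unique matrix entry is computed (via the inner-product/Frobenius formula) to be $\bigoplus_{[\gamma]\in \hom(\pi_1(\Sigma_g),G)/G}\mathbb{C}$, which is canonically the DW state space $\mathcal{H}_{\Sigma_g}$ with basis the gauge classes of flat connections. A $3$-cobordism $M: \Sigma \ra \Sigma'$ then gives a single linear map whose $(\gamma,\gamma')$ component is the groupoid cardinality of the essential preimage of $([\gamma],[\gamma'])$ under the restriction maps, namely $\sum_{[\gamma'']} 1/|Aut(\gamma'')| = \frac{1}{|G|}\sum_{\gamma''} 1$, which is exactly the Freed--Quinn amplitude. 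This is precisely where your (legitimate) worry about matching the Nakayama $1/\#Aut$ normalization with the DW weights is resolved---so that part of your plan, step (4), is on target---but as proposed your proof compares the wrong categories against the wrong version of the DW model and therefore does not establish the stated theorem.
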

\begin{proof}
We need to exhibit the natural equivalence at the level of objects and
morphisms.

Now, $Z_G(\emptyset) \cong \V$, 
whose single basis 2-vector (mapped to $\mathbb{C}$ under the
equivalence) is the trivial representation of the trivial group.

So a cobordism in $Aut(\emptyset)$ gives a 2-linear map from $\V$ to
$\V$, which is naturally equivalent to giving a vector space (and in
particular, a complex vector space with a specified basis, and thus a
Hilbert space). Cobordisms in $Aut(\emptyset)$ are 2-dimensional
cobordisms from $\emptyset$ to $\emptyset$, or in other words, closed
2-dimensional manifolds. These are, up to diffeomorphism, just
genus-$g$ tori $\Sigma_g$.

Given $\Sigma_g$, the DW model produces a $d$-dimensional Hilbert
space $\mathcal{H}_{\Sigma_g}$, where
\begin{eqnarray}
d & = & |V_g|\\
\nonumber  & = & |\hom(\pi_1(\Sigma_g),G)/G|
\end{eqnarray}
with a basis canonically indexed by conjugacy classes of flat
connections $\gamma \in \hom(\pi_1(\Sigma_g),G)/G$.

Now, thinking of $\Sigma_g$ as a cobordism, that is, as a cospan:
\begin{equation}
\emptyset \ra \Sigma_g \la \emptyset
\end{equation}
we get the span of groupoids
\begin{equation}
\catname{!} \stackrel{!}{\la} \fc{\Sigma_g} \wquot G \stackrel{!}{\ra} \catname{1}
\end{equation}

Here, the map $!$ denotes the unique map into the terminal object,
which we name in order to express the following formulas. By the
above, we find that $Z_G(\Sigma_g) : Z_G(\emptyset) \ra
Z_G(\emptyset)$ can be represented as a $1 \times 1$ matrix, with
\begin{eqnarray}
Z_G(\Sigma_g)_{\mathbb{C},\mathbb{C}} & = & \inprod{!^*(\mathbb{C}),!^*(\mathbb{C})} \\
\nonumber & = & \bigoplus_{\gamma \in \fc{\Sigma_g}/G} \hom(\mathbb{C},\mathbb{C}) \\
\nonumber & \cong & \bigoplus_{\gamma \in \fc{\Sigma_g}/G} \mathbb{C}
\end{eqnarray}
This is canonically isomorphic to $\mathcal{H}_{\Sigma_g}$.

So suppose we have a 3-dimensional cobordism between 2-manifolds
$\Sigma$ and $\Sigma '$, which amounts to a cospan of cospan maps:
\begin{equation}
  \xymatrix{
    & \Sigma \ar[d]& \\
    \emptyset \ar[ru] \ar[r] \ar[rd] & M & \emptyset \ar[lu] \ar[l] \ar[ld] \\
    & \Sigma ' \ar[u] & \\
  }
\end{equation}
then again there is a span of span maps, where the central morphisms
are the restrictions along the boundary inclusion maps:
\begin{equation}
\xymatrix{
 & \fc{\Sigma}  \ar[dl]_{!} \ar [dr]^{!} & \\
1 & \fc{M} \ar[u] \ar[d] & 1  \\
 & \fc{\Sigma '} \ar[ul]^{!} \ar [ur]_{!} & \\
}
\end{equation}
Now, this gives a 2-linear map $Z_G(M)$, and again there is only one
entry, so we find
\begin{equation}
  Z_G(M)_{\mathbb{C},\mathbb{C}} : Z_G(\Sigma)_{\mathbb{C},\mathbb{C}} \ra Z_G(\Sigma')_{\mathbb{C},\mathbb{C}}
\end{equation}

And in particular, we can write the components of this linear map as:
\begin{eqnarray}
  (Z_G(M)_{\mathbb{C},\mathbb{C}})_{\gamma,\gamma '} & = & |(\pi , \pi ')^{-1}([\gamma],[\gamma '])| \\
\nonumber  & = & \sum_{[\gamma ''] \in (\pi , \pi ')^{-1} ( [\gamma], [\gamma '])} \frac{1}{|Aut(\gamma '')|} \\
\nonumber  & = & \sum_{[\gamma '']} \frac{|\{\gamma '' \in [\gamma '']\}|}{|G|} \\
\end{eqnarray}
In this last sum, we have only observed that the size of the
automorphism group of a given connection $\gamma ''$ is just the size
of the full group divided by the size of the orbit of $\gamma ''$,
namely the equivalence class $[\gamma '']$.  But now we can convert a
sum over this equivalence class into a sum over all connections, and
get:
\begin{equation}
 (Z_G(M)_{\mathbb{C},\mathbb{C}})_{\gamma,\gamma '} = \frac{1}{|G|} \sum_{\gamma ''} 1
\end{equation}
So this is the same linear map produced by the DW model.
\end{proof}

The DW model itself is somewhat more general than what we have
discussed so far. In fact, the 2-linearization framework used here
constructs a particular ETQFT, which is the "untwisted" theory.
Twisted DW models may be defined using a class $\alpha$ from the group
cohomology of $G$.  To produce twisted DW models, one must extend the
2-linearization framework to include cocycles. In the twisted
DW model, the ``topological action'' associated to a
given flat connection, is a unit complex number associated to that
connection, determined by $\alpha$.

In Section \ref{sec:twisting} we will describe how to extend this
result through a generalized 2-linearization process.

\section{Example Calculations}\label{sec:examples}

Although the construction for an ETQFT will work in any dimension, its
main features are visible in any dimension at least 2, to allow
codimension-2 submanifolds. Some calculations in low dimensions
illustrate the invariants produced by the ETQFT.

\subsection{$Z_G$ On Manifolds}

We can give the dimension of the 2-vector space assigned to any
manifold by counting its basis objects, which yields the following
straightforward fact:

\begin{proposition}\label{thm:ZBdimension} The KV 2-vector space $Z_G(B)$
  for any connected manifold $B$ has dimension:
  \begin{equation}
    \sum_{[A] \in \mathcal{A}/G} |\{\text{irreps of} \opname{Aut}(A)\}|
  \end{equation}
  The sum is over equivalence classes of connections on $B$.
\end{proposition}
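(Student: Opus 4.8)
The plan is to compute the dimension of the Kapranov--Voevodsky 2-vector space $Z_G(B) = \Z{B}$ directly from its definition as a functor category. By the construction of $\FV$, we have $Z_G(B) = [\fc{B}, \V]$, and by Proposition \ref{thm:fcBfinite} this groupoid is essentially finite, so $[\fc{B},\V]$ is indeed a finitely semisimple $\mathbb{C}$-linear abelian category; its dimension is by definition the number of isomorphism classes of simple objects. So the whole task is to enumerate the simple objects of $[\fc{B},\V]$.

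First I would reduce to the skeleton: since $\fc{B}$ is equivalent to its skeleton, and equivalent groupoids have equivalent functor categories into $\V$, I may replace $\fc{B}$ by $\coprod_{[A]} \opname{Aut}(A)$, the disjoint union of one-object groupoids indexed by the equivalence classes $[A] \in \mathcal{A}/G$ of connections on $B$ (here I am using that for connected $B$, $\Pi_1(B)$ is equivalent to the one-object groupoid $\pi_1(B,b)$, so a flat connection up to gauge is determined by a conjugacy class of homomorphisms $\pi_1(B,b) \to G$, with $\opname{Aut}(A)$ the stabilizer). Then $[\coprod_{[A]} \opname{Aut}(A), \V] \simeq \prod_{[A]} [\opname{Aut}(A), \V]$, since a functor out of a disjoint union is the same as a tuple of functors on each component. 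Now $[\opname{Aut}(A),\V]$ is exactly $\mathrm{Rep}(\opname{Aut}(A))$, the category of finite-dimensional complex representations of the finite group $\opname{Aut}(A)$; by Maschke's theorem and the standard representation theory of finite groups, its simple objects are the irreducible representations, of which there are finitely many. The dimension of a finite product of 2-vector spaces is the sum of the dimensions, so
\begin{equation}
  \dim Z_G(B) = \sum_{[A] \in \mathcal{A}/G} \dim \mathrm{Rep}(\opname{Aut}(A)) = \sum_{[A] \in \mathcal{A}/G} |\{\text{irreps of } \opname{Aut}(A)\}|,
\end{equation}
which is the claimed formula.

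The only genuinely substantive point — and hence the main obstacle, such as it is — is justifying that simple objects of $[\fc{B},\V]$ are precisely the ``irreducible bundles'' supported on a single component with an irreducible $\opname{Aut}(A)$-action, i.e. that the equivalence $[\fc{B},\V] \simeq \prod_{[A]}\mathrm{Rep}(\opname{Aut}(A))$ identifies simples on the nose and that no simples are missed. This is where the essential finiteness from Proposition \ref{thm:fcBfinite} is used: finiteness of the set of components guarantees the product is finite, and finiteness (up to equivalence) of each automorphism group guarantees each $\mathrm{Rep}(\opname{Aut}(A))$ is finitely semisimple. Everything else is a routine unwinding of definitions, so I would state the skeleton reduction and the product decomposition, invoke standard finite-group representation theory for the count of simples in each factor, and conclude.
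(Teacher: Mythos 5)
Your proposal is correct and follows essentially the same route as the paper's own proof: reduce to the skeleton of $\fc{B}$, observe that simple objects correspond to a choice of equivalence class $[A]$ together with an irreducible representation of $\opname{Aut}(A)$, and count. The paper phrases the component decomposition via Schur's lemma rather than writing the functor category explicitly as a product over components, but this is only a difference of presentation, not of substance.
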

\begin{proof}
  This is just a special case of the general fact about representation
  categories for groupoids.

  The groupoid $\fc{B}$ is equivalent to its skeleton $S$, whose
  objects are the gauge equivalence classes of connections on $B$. The
  morphisms are the stabilizer groups at each object. That is, they
  are exactly the gauge transformations which fix a representative of
  each class. Then $[S,\V] \simeq \Z{B}$, but $[S,\V]$ is a KV vector
  space, hence equivalent to some $\V^m$, where $m$ is the number of
  non-isomorphic simple objects.

  A functor $F : S \ra \V$ assigns a vector space to each object $[A]$
  (equivalence class of connections), carrying a representation of
  $\opname{Aut}(A)$.  Two functors giving inequivalent representations
  cannot have any nontrivial natural transformation between them, by
  Schur's lemma. On the other hand, any representation of
  $\opname{Aut}(A)$ is a direct sum of irreducible representations. So
  a choice of simple object in $\Z{B}$ amounts to a choice of $[A]$,
  and an irreducible representation of $\opname{Aut}(A)$. The
  statement follows immediately.
\end{proof}

Our construction works, in principle, for manifolds of any dimension,
though computations become more involved in higher dimension as one
might expect.  Next we consider some particular examples.

\begin{example}\label{ex:ZonS1}
  All 1-dimensional manifolds are equivalent to a collection of
  circles, so to understand the effect on 1-manifolds, it suffices to
  know that the 2-vector space assigned to the circle $S^1$ by $Z_G$
  is:
  \begin{equation}
    \Z{S^1} = \FV \circ \fc{S^1} = Rep(\fc{S^1})
  \end{equation}

  The fundamental group of the circle is $\mathbb{Z}$, and
  $\Pi_1(S^1)$ is thus equivalent to $\mathbb{Z}$ as a one-object
  category. A functor $g: \mathbb{Z} \ra G$ is determined by $g(1) \in
  G$, which we also denote $g$ for convenience. A natural
  transformation is a conjugacy relation: $h: g \rightarrow g'$
  assigns to the single object in $\mathbb{Z}$ a morphism $h \in G$,
  and the naturality condition that $g'h = hg$, or simply $g' =
  hgh^{-1}$.

  Thus, $\fc{S^1}$ is equivalent to the groupoid whose objects are $g
  \in G$, and whose morphisms are conjugacy relations between
  elements.  This is the transformation groupoid of the adjoint action
  of the group $G$ on itself, also known as $G$ ``weakly modulo'' $G$,
  or $G \wquot G$.  This is also the discrete form of the \textit{loop
    groupoid} $LG$, as summarized in the account by Willerton
  \cite{willerton}).

  Finally, the 2-vector space corresponding to the circle is the
  category of representations of $G \wquot G$.
  \begin{equation}
    Z_G(S^1) = Rep(G \wquot G)
  \end{equation}
  is generated by a basis of irreducible objects, the elements of
  which are labelled by pairs: a conjugacy class $[g]$ in $G$, and an
  irreducible representation of $Aut([g])$. All representations are
  direct sums of these.  One can also think of an object of $Z_g(S^1)$
  as a $G$-equivariant vector bundle on the groupoid of connections on
  $S^1$, which is just $G$ .

  The skeleton of $G \wquot G$ has as objects the conjugacy classes of
  $G$, and each object has $Aut(g) < G$, the stabilizer subgroup of
  $g$, which is the centralizer $C_g$.

  Since a general 1-manifold is isomorphic to $S^1 \cup \dots \cup
  S^1$, the effect of $Z_G$ then just amounts to:
  \begin{align}
     Z_G ( S^1 \cup \dots \cup S^1 ) 
     & = Z_G(S^1) \otimes \dots \otimes Z_G(S^1) \\
     & = Rep(G \wquot G) \otimes \dots \otimes Rep(G \wquot G)
  \end{align}
  (The monoidal operation $\otimes$ is the Deligne tensor product for
  Abelian categories.  It is analogous to the tensor product for
  modules or vector spaces and is dual to the internal $Hom$ functor,
  so that $A \otimes B$ is a representing object for bi-2-linear
  functors out of $A \times B$.)
\end{example}

Let us briefly consider the case where objects are 2-dimensional
manifolds (as in the 4D TQFT).  We will not study the 4D ETQFT in
detail, but this will illustrate that the same construction will work
in that case.

\begin{example}
  Consider the torus $T^2 = S^1 \times S^1$. We want to find
  \begin{equation}
    Z_G(T^2) = \Z{T^2}
  \end{equation}
  The category $\fc{T^2}$ is again the category of functor $\Pi_1(T^2)
  \ra G$ and natural transformations. The groupoid $\Pi_1(T^2)$ is
  equivalent to its skeleton, namely the fundamental group of
  $T^2$. This is just $\mathbb{Z}^2$.  So the functor $F \in [
  \mathbb{Z}^2 , G ]$ is a group homomorphism from $\mathbb{Z}^2$ to
  $G$, and thus determined by the images of the two generators $(1,0)$
  and $(0,1)$.  Since $\mathbb{Z}^2$ is abelian, the images $g_1 =
  F(1,0)$ and $g_2 = F(0,1)$ must commute.

  So the objects of $\fc{T^2}$ are indexed by commuting pairs of
  elements $(g_1,g_2) \in G^2$. (We note here that in the case of a
  topological group, this is a space of some interest in itself; see
  e.g. \cite{ademcohen}. In the discrete case, this is simply a set.)

  A natural transformation $g : F \ra F'$ assigns to the single object
  $\star$ of $\mathbb{Z}^2$ a morphism in $G$---that is, a group
  element $h$. This must satisfy the naturality condition $h F(a)
  h^{-1} = F'(a)$ for all $a$. This will be true for all $a$ in
  $\mathbb{Z}^2$ as long as it is true for $(1,0)$ and $(0,1)$.

  In other words, for functors $F$ and $F'$ represented by $(g_1,g_2)
  \in G^2$ and $(g'_1,g'_2) \in G^2$, the natural transformations
  $\alpha : F \ra F'$ are represented by group elements $h \in G$
  which act in both components at once, so $(h^{-1}g_1h,h^{-1}g_2h) =
  (g'_1,g'_2)$.

  So we have that the groupoid $\fc{T^2}$ is equivalent to $A \wquot
  G$, where $A = \{(g_1,g_2) \in G^2 : g_1g_2 = g_2g_1\}$, and the
  action of $G$ on $A$ comes from the action on $G^2$ as above.

  So the 2-vector space $Z_G(T^2)$ is just the category of
  $\V$-presheaves on $A$, equivariant under the given action of $G$.
  This assigns a vector space to each connection $(g_1,g_2)$ on $T^2$,
  and an isomorphism of these vector spaces for each gauge
  transformation $h : (g_1,g_2) \mapsto (h^{-1}g_1h,h^{-1}g_2h)$.
  Equivalently (taking a skeleton of this), we could say it gives a
  vector space for each equivalence class $[(g_1,g_2)] \in G^2$ under
  simultaneous conjugation, and a representation of $G$ on this vector
  space.

  A similar pattern will apply to a 2-dimensional surface of genus
  $k$.
\end{example}

\subsection{$Z_G$ on Cobordisms}

To help clarify the construction we have described, we consider some
examples for particular cobordisms, and particular groups $G$.

We should emphasize here that although we are primarily considering
the case $n=3$ in order to establish a link with the DW
theory, we will express our results generically.

In particular, Turaev established \cite{turaev-MTC} that a
(2+1)-dimensional TQFT is determined by a modular tensor category
(MTC) $\catname{C}$ (i.e. monoidal category with a modular
structure). Such a category has a finite set of generators (the number
of generators is the ``rank'' of the MTC), and the monoidal structure
is determined by the matrices representing $Z_G(Y)$.

In the framework we have been describing, $\catname{C} = Z_G(S^1)$ is
the 2-vector space for the circle. The monoidal structure for
$\catname{C}$ as a MTC turns out to be given by the value of $Z_G$ on
the ``pair of pants'' cobordism, or ``trinion'', $Y : S^1 \sqcup S^1
\ra S^1$. This depicts two circles coalescing into one circle, by a
cobordism which is topologically a sphere with the interiors of three
disks removed. It therefore determines a monoidal structure by:
\begin{equation}
  Z_G(Y) : \catname{C} \otimes \catname{C} \ra \catname{C}
\end{equation}
Now, up to diffeomorphism, all 2-dimensional cobordisms can be written
as composites of some number of copies of $Y$ and its dual, so knowing
this monoidal structure completely determines the effect of the TQFT
on morphisms. It is common to describe this using the monoidal
structure directly.

However, our general framework can be applied to other values of $n$,
for which there is no such simple ``normal form'' for
cobordisms. However, we can still compute 2-linear maps in these
cases. It is common to describe a MTC $\catname{C}$ as a monoidal
category, giving fusion rules for the ``tensor product'' functor
$Z_G{Y}$. However, we prefer here simply to use a standard matrix
representation of a 2-linear map, since these techniques would apply
equally in higher dimensions where no such presentation exists, though
of course when $n=3$ the two approaches are equivalent.

\subsubsection{Example: The Pair of Pants}

Applying the monoidal functor $Z_G$ to the pair of pants gives a 2-linear map:
\begin{equation}
  Z_G(Y) : Z_G(S^1) \otimes Z_G(S^1) \rightarrow Z_G(S^1)
\end{equation}

The basis of generating objects for the two 2-vector spaces are the
irreducible representations of the corresponding groupoid. Given
irreducibles $V$, and $W$, the coefficients of the 2-linear map
$Z_G(Y)$ are, by Frobenius reciprocity (see \cite{gpd2vect}):
\begin{equation}
  Z_G(Y)_{V,W} \cong \opname{hom}_{Rep(\fc{Y})}(p_1^{\ast}(V),p_2^{\ast}(W))
\end{equation}
That is, one pulls back the basis 2-vectors to give representations
$p_1^{\ast}(V)$ and $p_2^{\ast}(W)$ of $\fc{Y}$, the middle groupoid
of the span. The coefficient is the ``inner product''---the internal
hom, which is the space of intertwiners between the two pulled-back
representations. By Frobenius reciprocity, this amounts to counting
the multiplicities of each generating irreducible in the target 2-vector space in
the image of the chosen generating irreducible in the source.

Since the fundamental groupoid of $S^1 \cup S^1$ is just $\Pi_1(S^1)
\cup \Pi_1(S^1)$, (a disjoint union of two copies of $\Pi_1(S^1) \cong
\mathbb{Z}$), a functor into $G$ then amounts to two choices $g, g'
\in G$. A gauge transformation amounts to a conjugation by some $h \in
G$ at each of two chosen base points, one in each
component:
\begin{eqnarray}
  \fc{S^1 \cup S^1}  & \cong &  (G \times G) \wquot (G \times G) \\
  \nonumber  &\cong &  (G \wquot G)^2
\end{eqnarray}
where $G \times G$ acts on itself by conjugation component-wise. This
is illustrated in Figure \ref{fig:pantsconnection}.  The connection on
$Y$ has holonomies $g$ and $g'$ around the two holes. On $S^1 \cup
S^1$, this restricts to a connection with holonomies $g$ and $g'$
respectively, and on $S^1$ to the product (since the outside $S^1$ is
homotopic to the composite of the two loops).

\begin{figure}[h]
  \begin{center}
    \includegraphics{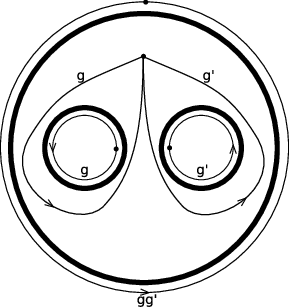}
  \end{center}
  \caption{Connection for Pants\label{fig:pantsconnection}}
\end{figure}

The fundamental groupoid is equivalent to $\pi_1(Y) =
F(\gamma_1,\gamma_2)$, the free group on two generators. A functor in
$[\Pi_1(Y),G]$ thus amounts to a pair of elements $(g,g')$, the images
of the two generators. A gauge transformation amounts to conjugation
at the single object (a chosen base point in $Y$). So we have the
groupoid:
\begin{equation}
  \fc{S} \cong (G \times G) \wquot G
\end{equation}
in which $G$ acts on $G \times G$ by conjugation in both components at
once. Thus we have the span:
\begin{equation}\label{eq:pantspan}
  \xymatrix{
    & (G \times G)\wquot G \ar[dl]_{p_1} \ar[dr]^{p_2} & \\
    (G \wquot G)^2 & & G \wquot G
  }
\end{equation}
Both projections are restrictions of a connection on $Y$ to the
corresponding connection on the components of the boundary. It is
clear from the figure that on objects:
\begin{align}
p_1 : & (g_1, g_2) \Rightarrow (g_1, g_2) \\
p_2 : & (g,g') \Rightarrow gg'
\end{align}
and on morphisms
\begin{align}
  p_1 : & (h : (g_1,g_2) \ra (hg_1g^{-1},hg_2h^{-1}) ) \Rightarrow ( (h,h) : (g_1,g_2) \ra (hg_1h^{-1},hg_2h^{-1}) )\\
  p_2 : & (h : (g,g') \ra (hgh^{-1},hg'h^{-1}) ) \Rightarrow ( h : gg'
  \ra hgg'h^{-1})
\end{align}

The classes of connections on $Y$ are of the form $[(g,g')]$ for $g,
g' \in G$, and the class is an equivalence class modulo gauge
transformations, conjugating by $(h,h)$.  The classes for $S^1 \cup
S^1$ are of the form $([g],[g'])$, since equivalence is by conjugation
by $(h,h') \in G^2$. So connections which are gauge equivalent on $S^1
\cup S^1$ may be restrictions of inequivalent connections on $Y$.

Finally, suppose we have a functor $f : \fc{S^1 \cup S^1} \ra \V$:
that is, a representation of $\fc{S^1} \times \fc{S^1}$, and transport
it by $Z_G(Y) = (p_2)_{\ast} \circ (p_1)^{\ast}$. That is, we first
pull back along $p_1$ from $S^1 \cup S^1$ to $Y$, then push forward
along $p_2$ to $S^1$.

Now, an irreducible representation of $\fc{S^1} \times \fc{S^1}$
amounts to a pair of irreducible representations of $\fc{S^1}$. Each
one amounts to a choice of isomorphism class $[g]$ in $G \wquot G$,
and a representation $V$ of $Aut(g)$.  Call these $([g],V)$ and
$([g'],V')$. Then the pair $([g],[g'])$ is the image of any $[(g,g')]$
in $(G \times G) \wquot G$ for some pair $(g,g')$ representing
$([g],[g'])$. This will then be pushed down by $m$ to a representation
of $Aut(g_1)$, where $g_1 = gg'$. There may be more than one
$[(g,g')]$ for which this holds for a given $[g_1]$.  In fact,
following \cite{gpd2vect}, we can then find that the image is:
\begin{equation}\label{ZGimage}
  (\pi_2)_{\ast} \circ \pi_1^{\ast}(g_1) \cong \bigoplus_{(g,g') \in
    \{ ([g],[g']) | gg' = g_1 \} } \mathbb{C}[\opname{Aut}(g_1)]
  \otimes_{\mathbb{C}[\opname{Aut}(g,g')]} (V \otimes V')
\end{equation}
where the direct sum is over all choices of $(g,g')$ representing
$([g],[g'])$ and satisfying $gg'=g_1$, up to equivalence in $(G \times
G)\wquot G$. The action of $G$ on each component is as we have
described. On morphisms, we get the direct sum of the isomorphisms
between these copies of $\mathbb{C}$. (This formula is a special case
of the general form for Kan extension in an enriched category - see
e.g. \cite{dubuc}).

Now suppose $G$ is abelian. Then each element is in a unique conjugacy
class. The irreducible representations of $G$ are 1-dimensional,
forming $\hat{G}$, the character group of $G$. For abelian groups,
this is just $\hat{G} \cong G$. So the simple representations are
labelled by $G \times G$. This and general structure theorems for
abelian groups make this case simple.

\begin{lemma}
  If $G = G_1 \oplus \dots \oplus G_n$ is the direct sum of $n$ abelian
  groups, then $\fc{S^1}$ is:
  \begin{equation}
    G \wquot G \cong (G_1 \wquot G_1) \times \dots \times (G_n \wquot G_n)
  \end{equation}
\end{lemma}

This follows directly from the definitions, and, combined with
structure theorems for abelian groups and simple calculations for
cyclic groups, gives the simple result:

\begin{proposition}
  For $G$ a finite abelian group, the 2-linear map $Z_G(Y)$ is given
  by group operation $+$ in $G \times G$:
  \begin{equation}
    Z_G(Y) : ((g_1,g_2),(g'_1,g'_2)) \mapsto (g_1+g'_1,g_2+g'_2)
  \end{equation}
\end{proposition}

The case of $G$ nonabelian is more interesting, since there may be
more than one representative of $([g],[g'])$ contributing to a given
term, and the stabilizer groups are different for different
objects. Since the matrix form of 2-linear maps $Z_G(Y)$ are in
general rather large even for fairly small $G$, so, we will here only
illustrate certain interesting components in some simple cases, namely
$G = S_3$ and $G=S_4$.  Since in general these matrices are quite
large, we will simply find a few blocks.

\begin{example}\label{ex:zgs3}
  First, find $Z_{S_3}(S^1) = Rep(S_3 \wquot S_3)$. As usual, this is
  generated by irreducible objects labelled by $([g],\rho)$, where
  $[g]$ is a conjugacy class in $G = S_3$, and $\rho$ is an
  irreducible representation of $Stab(g) \subset G = S_3$. The
  groupoid $S_3 \wquot S_3$ has stabilizer groups and irreducible
  representation as in Table \ref{table:s3class}.
  \begin{table}\label{table:s3class}[h]
    \begin{tabular}{|l|l|l|}
      \hline
      Class $[g]$ & Group $Stab(g)$ & Representations \\
      \hline
      1 (identity) & $S_3$ & $\mathbb{C}$, $\Gamma =$ {\tiny{$\yng(2,1)$}}, $\sigma$ \\
      \hline
      [t] (transposition) & $\mathbb{Z}_2 = \{ 1, t \}$ & $\mathbb{C}$ and $\sigma$ \\
      \hline
      [s] (3-cycle) & $\mathbb{Z}_3 = \{ 1 , s , s^2 \}$ & $\mathbb{C}$, $\phi$, $\phi^2$ \\
      \hline
    \end{tabular}
    \caption{Generators of $Z_{S_3}(S^1)$}
  \end{table}
  For $[t]$ and $[s]$, the stabilizer groups are abelian, so for
  $\mathbb{Z}_2$ we have the trivial and sign representations, and for
  $[s]$ we have the irreducible representations of $\mathbb{Z}_3$ on
  $\mathbb{C}$ as before.  For $S_3$, the three irreducible reps are
  labelled by three-block Young tableaux (see e.g. \cite{sternberg}),
  though these include the trivial representation $\mathbb{C} =$
  {\tiny{$\yng(3)$}} and the sign representation $\sigma =$
  {\tiny{$\yng(1,1,1)$}}. (The remaining irreducible representation,
  $\Gamma = ${\tiny{$\yng(2,1)$}} is the 2-dimensional representation
  of $S_3$ given by the action of $S_3$ on the vertices of a
  triangle.)

  Then $Z_{S_3}(Y) : Z_{S_3}(S^1 \coprod S^1) \rightarrow
  Z_{S_3}(S^1)$ is a 2-linear map taking representations of $(S_3
  \wquot S_3)^2$ to representations of $S_3 \wquot S_3$, which are as
  described above. An irreducible representation of $(S_3 \wquot
  S_3)^2$ is labelled by a pair
  $\bigl{(}([g],\rho),([g'],\rho')\bigr{)}$ of irreducible
  representations of $S_3 \wquot S_3$.

  The functor $Z_{S_3}(Y)$ can then be described by a (64-by-8) matrix
  of vector spaces. The entries are given by Frobenius reciprocity,
  pulling representations back along $m$ and $\Delta$ to
  representations of $\fc{Y} = (S_3 \times S_3) \wquot S_3$. So in
  particular, we get a sum over isomorphism classes in $\fc{Y}$. These
  are given in Table \ref{table:s3class}.

  \begin{table}\label{table:s3s3class}[h]
    \begin{tabular}{|c|c|c|c|}
      \hline
      $[(g_1,g_2)]$ & $Stab([(g_1,g_2)])$ & $\opname{Im}(m)$ & $\opname{Im}(\Delta)$ \\
      \hline
      $[(1,1)]$   & $S_3$ & $[1]$ & $([1],[1])$ \\
      \hline
      $[(1,t)]$   & $\mathbb{Z}_2$ & $[t]$ & $([1],[t])$ \\
      \hline
      $[(1,s)]$   & $\mathbb{Z}_3$ & $[s]$ & $([1],[s])$ \\
      \hline
      $[(t,1)]$   & $\mathbb{Z}_2$ & $[t]$ & $([t],[1])$ \\
      \hline
      $[(t,t)]$   & $\mathbb{Z}_2$ & $[1]$ & $([t],[t])$ \\
      \hline
      $[(t,t')]$  & $1$ & $[s]$ & $([t],[t])$ \\
      \hline
      $[(t,s)]$   & $1$ & $[t]$ & $([t],[s])$ \\
      \hline
      $[(s,1)]$   & $\mathbb{Z}_3$ & $[s]$ & $([s],[1])$ \\
      \hline
      $[(s,t)]$   & $1$ & $[t]$ & $([s],[t])$ \\
      \hline
      $[(s,s)]$   & $\mathbb{Z}_3$ & $[s]$ & $([s],[s])$ \\
      \hline
      $[(s,s^2)]$ & $\mathbb{Z}_3$ & $[1]$ & $([s],[s])$ \\
      \hline
    \end{tabular}
    \caption{Isomorphism Classes of of $(S_3 \times S_3) \wquot S_3$}
  \end{table}
  Since both elements of a pair $(g_1,g_2)$ are conjugated by the same
  $g$ in this quotient action, we can distinguish cycles and
  permutations, as in $([s],[s^2])$. Thus, there are two possible
  preimages of $([t],[t])$, depending on whether the two permutations
  $t$ and $t'$ are the same, and similarly for $([s],[s])$.

  So then we have that in matrix form
  $Z_{S_3}(Y)_{([g],\rho),(([g_1],\rho_1),([g_2],\rho_2))}$ is given
  by a direct sum over the isomorphism classes in $(S_3 \times S_3)
  \wquot S_3$ lying over $[g]$ by $m$ and over $([g_1],[g_2])$ by
  $\Delta$ (that is, non-conjugate pairs with $g_2 g_1 = g$). The
  coefficients for particular representations show the multiplicity of
  $\rho$ in the image of $(\rho_1,\rho_2)$. Unlike the abelian case,
  there are nontrivial coefficients from $([t],[t])$ to two different
  elements, $[s]$ and $[1]$.

  For example, we now find the block of $Z_{S_3}(Y)$ corresponding to
  the objects $([t],[t])$ and $[1]$.  There is a single object in
  $(S_3 \times S_3) \wquot S_3$ lying over these objects, namely
  $[(t,t)]$.  Restricting to these objects, we have the span of
  automorphism groups:
  \begin{equation}
    \mathbb{Z}_2 \times \mathbb{Z}_2 \stackrel{\Delta}{\la} \mathbb{Z}_2 \stackrel{i}{\ra} S_3
  \end{equation}
  (The map $i : \mathbb{Z}_2 \ra S_3$ is the injection homomorphism
  which takes the non-identity element of $\mathbb{Z}_2$ to $t \in
  S_3$.)  We can calculate the block of $Z_{S_3}(Y)$ with indices
  given by the irreducible representations of these groups, namely $\{
  (\mathbb{C},\mathbb{C}), (\mathbb{C},\sigma), (\sigma,\mathbb{C}),
  (\sigma,\sigma) \}$, and $\{ \mathbb{C},\Gamma,\sigma \}$,
  respectively.

  We find these by pulling back each representation to $\mathbb{Z}_2$
  along $\Delta$ or $i$, and using Schur's lemma.  One can easily find
  the block to be (using integers to represent vector spaces):
  \begin{equation}
    Z_{\mathbb{S}_3}(Y)_{1,([t],[t])} =
    \begin{pmatrix}
      0 & 1 & 1 \\
      1 & 1 & 0 \\
      1 & 1 & 0 \\
      0 & 1 & 1
    \end{pmatrix}
  \end{equation}
\end{example}

In the example above, we see that the matrix form of $Z_G(Y)$ need not
be a multiplication matrix for a group, as it is for the abelian case,
essentially because the image of an irreducible may not itself be
irreducible.  However this example is still special in that only one
object in the middle groupoid $G \times G \wquot G$ contributes to any
given matrix entry.  This is not true in general.  The following
(abbreviated) example illustrates the point in a more general
framework than the above.

\begin{example}
  If $G = S_4$, then $\fc{S^1} = S_4 \wquot S_4$ has isomorphism
  classes given by the conjugacy classes of permutations of 4
  elements.  These are classified by 4-box Young diagrams, of which
  there are five:{\tiny{
  \begin{equation}
    \yng(4), \yng(3,1), \yng(2,2), \yng(2,1,1), \yng(1,1,1,1)
  \end{equation}}}
  In the same way, $\fc{S^1 \cup S^1}$ has isomorphism classes given
  by pairs of such diagrams.

  As in Example \ref{ex:zgs3}, we find a single block of $Z_{S_4}(Y)$,
  namely the block corresponding to $( (${\tiny{$\yng(3,1)$}}$ ,
  ${\tiny{$\yng(3,1)$}}$) , ${\tiny{$\yng(3,1)$}}$ ) $.  Here we are
  using the diagram which corresponds to the conjugacy class of a
  3-cycle (i.e. a permutation of four elements with one fixed point).
  In the usual cycle notation for permutations, this object is
  $[(123)(4)]$.

  The point now is that there are two classes in $(S_4 \times S_4)
  \wquot S_4$ which project to $[((123)(4),(123)(4))]$ under $\Delta$
  and $[(123)(4)]$ under the multiplication map $m$.  That is, there
  are two conjugacy classes of pairs of 3-cycles whose product is a
  3-cycle.  Representatives of these two classes are: $a = ( (123)(4),
  (123)(4) )$, where the product is $m(a)=(132)(4)$; and $b=( (123)(4) ,
  (243)(1) )$.  where the product is $m(b)=(143)(2)$.  It is
  straightforward to check these are the only cases.

  Now, $Aut((123)(4)) \cong \mathbb{Z}_3$: precisely the powers of
  this 3-cycle stabilize it under conjugation ($4$ must be a fixed
  point of any $\pi \in Aut( (123)(4) )$, and a transposition would
  change the cycle).  So, since $a$ consists of two copies of this
  cycle, $Aut(a) \cong \mathbb{Z}_3$ also.  On the other hand, $Aut(b)
  = \{ Id \}$: no permutation stabilizes both permutations in the pair
  $b$.

  Thus, to find the $( (${\tiny{$\yng(3,1)$}}$ ,
  ${\tiny{$\yng(3,1)$}}$) , ${\tiny{$\yng(3,1)$}}$ ) $ block of
  $Z_{S_4}(Y)$, we take a direct sum over $a$ and $b$ of the
  restriction-induction functors.  These come from two spans of
  automorphism groups from $\mathbb{Z}_3^2$ to $\mathbb{Z}_3$.  One
  can check that at $a$ we have the span:
  \begin{equation}
    \mathbb{Z}_3 ^2 \stackrel{\Delta}{\la} \mathbb{Z}_3 \stackrel{id}{\ra} \mathbb{Z}_3
  \end{equation}
  The corresponding 2-linear map is just the multiplication map.  On
  the other hand, at $b$ we have the span:
  \begin{equation}
    \mathbb{Z}_3 ^2 \stackrel{i}{\la} \{ Id \} \stackrel{i}{\ra} \mathbb{Z}_3
  \end{equation}
  In each case, the maps are the inclusions of the identity. Since the
  representations of $\mathbb{Z}_3$ (i.e. characters in
  $\hat{\mathbb{Z}_3}$) all pull back to the unique, trivial,
  representation of $\{ Id \}$, Schur's lemma says the resulting
  matrix has $\mathbb{C}$ in each component.  So taking the direct sum
  over $a$ and $b$, we find the block is:
  \begin{equation}
    \begin{pmatrix}
      2 & 1 & 1 & 1 & 2 & 1 & 1 & 1 & 2 \\
      1 & 2 & 1 & 1 & 1 & 2 & 2 & 1 & 1 \\
      1 & 1 & 2 & 2 & 1 & 1 & 1 & 2 & 1
    \end{pmatrix}
  \end{equation}
  The other blocks are all found in a similar way (though we note that
  this is the only block for the case $G = S_4$ where more than one
  object appears in the direct sum.)
\end{example}

The final aspect of our weak 2-functor is its effect on
2-morphisms.

\subsection{$Z_G$ on Cobordisms of Cobordisms}\label{sec:ZGonCobCob}

Now we consider the situation of a cobordism $M$ between cobordisms.

Recall that we are taking advantage of the construction
\cite{dblbicat} of a double bicategory of cobordisms $\nCob$,
consistent with the cubical approach of \cite{grandis1,
  grandis3}. This is a (weak) cubical 2-category, having horizontal
and vertical 1-morphisms which are both cobordisms, and horizontal and
vertical 2-morphisms which are diffeomorphisms between these. A square
in $\nCob$ is a cobordism between cobordisms. We may intuitively
understand the horizontal 1-morphisms at the top and bottom as the
(vertical) source and target. The (horizontal) boundaries change, by
the cobordisms which are the vertical 1-morphisms at the sides of the
square. The situation is, of course, completely symmetric, so we may
exchange the roles of horizontal and vertical in this interpretation.

The cubical picture is only a slightly more general picture than the
more common bicategory view of cobordisms. The usual bicategory of
cobordisms has as 2-morphisms which correspond exactly to those
squares with horizontal boundaries constrained to be identity
cobordisms (that is, the boundaries do not change). However, as
discussed in \cite{dblbicat}, it is possible to reduce $\nCob$ to a
bicategory, which is equivalent to the usual bicategory of
cobordisms. This uses the fact that horizontal and vertical morphisms
are both cobordisms, they can be composed with each other, and squares
can be taken to 2-morphisms in the bicategory of cobordisms. The only
information lost in this process is precisely how a given cobordism
factors into horizontal and vertical parts.

In the construction we give here, in order to make the connection
between the extended TQFT we construct and the DW model,
we will use the bicategory $\iiV$ as our target. Just because this is
a bicategory, and because we want to make use of the quantization
2-functor $\Lambda$ from \cite{gpd2vect} we use this reduction of
$\nCob$ to a bicategory.

In figure \ref{fig:2morphism} we show an example, which can be
construed as taking a pair of pants $Y$ to its reversed version
$Y^{\dagger}$.

\begin{figure}[h]
  \begin{center}
    \includegraphics{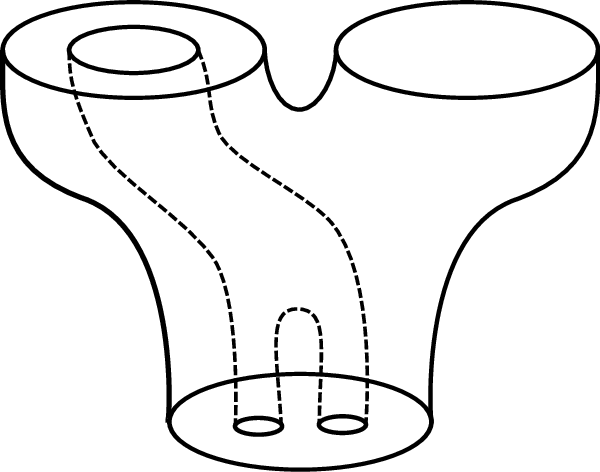}
  \end{center}
  \caption{\label{fig:2morphism}A 2-Morphism in $\iiiCob$}
\end{figure}
This clearly depicts a square in the double bicategory, since both
source and target change in this process. 

Our construction will then use the fact that there is an inclusion
\begin{equation}
  \nCob \ra \Cosp^2(\ManCorn)
\end{equation}
Here, a cobordism becomes a span of
inclusion maps. Thus the (collarable) inclusions of these boundary
components, and the corners, into this cobordism give the following
square in $\Cosp^2(\ManCorn)$:
\begin{equation}\label{eq:2morphismcosp}
  \xymatrix{
    S^1 \ar[r]^{i_A} \ar[d]_{i_1} & (A \coprod D) \ar[d]_{\iota_1} & S^1 \coprod S^1 \ar[l]_{i'_A \otimes i_D} \ar[d]^{i_2}\\
    Y \ar[r]^{\iota_3} & M & \ar[l]_{\iota_4} Y \\
    S^1 \coprod S^1 \ar[r]_{i_2} \ar[u]^{i_2} & Y \ar[u]^{\iota_2} & S^1 \ar[u]_{i_1} \ar[l]^{i_1}
  }
\end{equation}
Here, $A$ is the annulus and $D$ the disk in the top horizontal
cobordism, the $Y$ are instances of the pair of pants, and $M$ is the
whole 3-dimensional manifold with corners. The leftmost vertical
cospan is the inner cobordism, and the rightmost is the outer. The
maps come from the obvious inclusions.

To get a corresponding 2-morphism in the bicategory $\iiiCob$, one
must make some arbitrary choices, to choose the source and target
objects from the four corners of the square. A corollary of this
choice is that we lose the extra information encoded in the other two
corners of the square, and the distinction between horizontal and
vertical (hence change of boundary along the cobordism $M$).

The convention we adopt here is to take the source as the upper left
and the target as the lower right. The source and target morphisms are
then found by composing around the corners of the square. This
convention turns $M$ into a cobordism between two cobordisms, each of
which goes from $S^1$ to $S^1$. That is, $M$ becomes a cospan of
cospan maps, forming a 2-morphism between the two cobordisms:
\begin{equation}\label{eq:2morphismspancomposed}
  Y \circ (A \coprod D) \ra M \la Y \circ Y^{\dagger}
\end{equation}
as illustrated in Figure \ref{fig:2morbicat}. Here, $Y^{\dagger}$ is
the adjoint of $Y$ as a cobordism, namely $Y$ with direction reversed.
This step of the construction of $Z_G$ is slightly awkward since
$\nCob$ is most generally a double bicategory, that is, a weak cubical
2-category, and $\iiV$ is most naturally a bicategory (that is, a weak
globular 2-category).  As mentioned in Section \ref{sec:ncob}, this
bicategory is equivalent to the more commonly used form of $\nCob$.
We have chosen this method of reconciling them rather than the
alternate approach of using some cubical version of $\iiV$ for several
reasons: cubical $n$-categories are simply less familiar, the
(globular) bicategory $\iiV$ is already in common use, and the
quantization functor $\Lambda$ is adapted to it and has a
straightforward relation to the groupoidification program.

\begin{figure}[h]
  \begin{center}
    \includegraphics{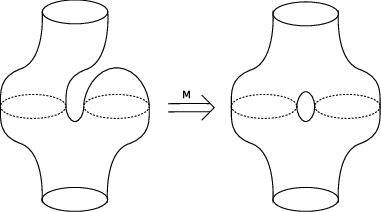}
  \end{center}
  \caption{\label{fig:2morbicat}The Same 2-Morphism, In A Bicategory}
\end{figure}

The left-hand composite $S$ just amounts to the identity cobordism for
$S^1$. So in particular, we can also regard this calculation as giving
a cobordism from the identity cobordism on $S^1$ (the cylinder) to $Y
\circ Y^{\dagger}$. The latter topologically a genus-1 surface with
removed disks at the input and output boundaries. Applying $\fc{-}$
gives a span of connection groupoids, all $G \wquot G$, with two
identity maps. The right-hand cobordism is the composite $S' =
Y^{\dagger} \circ Y$.

Now, the point of using this inclusion of $\nCob$ into
$\Cosp^2(\ManCorn)$ is that we can then extend the contravariant
functor $\fc{-} : \ManCorn \ra \Gpd$ to give a functor $\nCob \ra
\Span(\Gpd)$. Each object in a cospan then gives a groupoid, and the
contravariant turns a cospan into a span. Likewise, for 2-morphisms,
$\fc{-}$ gives a span of spans of groupoids of connections, as in
(\ref{eq:FV2mor}). We can then combine this 

In the case where objects in $\nCob$ are empty manifolds $\emptyset$,
cobordisms between two empty sets are themselves manifolds (with empty
boundary), and cobordisms between these have boundary, but no
nontrivial corners. So we have just a cobordism from one manifold to
another. It is reasonable to expect that in this case, the extended
TQFT based on a group $G$ should give results equivalent to those
obtained from a TQFT based on the same group, suitably
reinterpreted. This is indeed the case.

\begin{example}\label{ex:closedTQFT}Suppose we have two
  cobordisms $S$ and $S'$ from $\emptyset$ to $\emptyset$, and a
  cobordism with (empty!) corners $M : S \ra S'$. In fact, $M$ should
  be thought of as a cobordism between manifolds, in a precisely
  analogous way that $Z_G(S)$ can be thought of as a TQFT giving a
  vector space for the manifold $S$.

  In particular, we have
  \begin{equation}
    Z_G(S) \cong (- \otimes \mathbb{C}^k)
  \end{equation}
  and
  \begin{equation}
    Z_G(S') \cong (- \otimes \mathbb{C}^{k'})
  \end{equation}
  The $k$ and $k'$ are the number of isomorphism classes of
  connections on $S$ and $S'$ respectively. If we think of these as
  being vector spaces $\mathbb{C}^k$ and $\mathbb{C}^{k'}$ assigned by
  a TQFT, then a cobordism should assign a linear map $T :
  \mathbb{C}^k \ra \mathbb{C}^{k'}$. Indeed, such a linear map will
  give rise to a natural transformation from $Z_G(S)$ to $Z_G(S')$ by
  giving, for any objects $V \in \V$ on the left side of the diagram,
  the map $1_V \otimes T$ on the right side. Moreover, all such
  natural transformations arise this way.

  A cobordism between cobordisms gives rise to a natural
  transformation:
  \begin{equation}
    \xymatrix{
      \V \ar@/^2pc/[rr]^{(\pi_2)_{\ast} \circ \pi_1^{\ast}}="0" \ar@/_2pc/[rr]_{(\pi'_2)_{\ast} \circ (\pi'_1)^{\ast}}="1" & & \V \\
      {\ar@{=>}"0"+<0ex,-2.5ex> ;"1"+<0ex,+2.5ex>^{Z(M)}}
    }
  \end{equation}

  As discussed in \cite{gpd2vect}, this reduces to the
  groupoidification functor
  \begin{equation}
    D : \Span(\Gpd) \ra \V
  \end{equation}
  and our construction just yields a TQFT. That is, each 2-linear map
  can then be described as a 1-by-1 matrix of vector spaces (that is,
  a vector space), and the natural transformations are just described
  in this one component by a single linear map.
\end{example}

Now we look, in the 3D case, at a more general 2-morphism $M$ in
$\nCob$ and find $Z_G(M)$ for a general $G$.

\begin{example}
  Next, consider the cobordism $M$ between cobordisms from Figure
  \ref{fig:2morphism}, discussed above. The corresponding square
  (\ref{eq:2morphismcosp}).
 
  To compute $Z_G(M)$, we should convert $M$ to a 2-morphism in the
  bicategory $\iiiCob$, then apply $Z_G$. However, consider first the
  effect of $\fc{-}$ on (\ref{eq:2morphismcosp}), giving (up to
  equivalence) the following double span of groupoids:
  \begin{equation}\label{eq:2morphismspan}
    \xymatrix{
      G \wquot G & (G \wquot G) \otimes 1 \ar[l]_{id} \ar[r]^{id \otimes !} & (G \wquot G) ^2 \\
      G^2 \wquot G \ar[u]^{m} \ar[d]_{\Delta} & G^2 \wquot G \ar[l]_{id} \ar[r]^{(m,1)} \ar[u]^{(m,!)} \ar[d]_{id} & G^2 \wquot G \ar[u]_{\Delta} \ar[d]^{m} \\
      (G \wquot G)^2 & G^2 \wquot G \ar[l]^{\Delta} \ar[r]_{m} & G \wquot G
    }
  \end{equation}

  The corresponding 2-morphism in $\Span(\Gpd)$, can be found either
  by applying $\fc{-}$ directly to the cospan of cospans obtained from
  (\ref{eq:2morphismcosp}) by composing around corners, or else by
  finding (\ref{eq:2morphismspan}), and composing here. These are
  equivalent since $\fc{-}$ is functorial.


  In either case, we have the span of span maps:
  \begin{equation}\label{eq:2morphismspanspan}
    \xymatrix{
      & G \wquot G \ar[dl]_{id} \ar[dr]^{id} & \\
      G \wquot G & (G^2) \wquot G  \ar[r]^{m} \ar[l]_{m} \ar[d]_{i_3} \ar[u]^{m} & G \wquot G  \\
      & (G^3) \wquot G \ar[ul]^{s}\ar[ur]_{t} & \\
    }
  \end{equation}
  where the maps are given by:
  \begin{equation}
    i_3 : (g_1,g_2) \mapsto (g_1,g_2,1)
  \end{equation}
  and
  \begin{equation}
    s: (g_1,g_2,g_3) \mapsto g_2^{-1} g_1
  \end{equation}
  and
  \begin{equation}
    t : (g_1,g_2,g_3) \mapsto g_2^{-1} g_3 g_1 g_3^{-1}
  \end{equation}

  We can then apply $\Lambda$ to find $Z_G(M)$. Again, by
  functoriality, we can either compose the 2-linear map for the pair
  of pants $Y$ and its dual, or apply $\Lambda$ to
  (\ref{eq:2morphismspanspan}). Following \cite{gpd2vect}, the natural
  transformation $Z_G(M) : Z_G(S) \ra Z_G(S')$ is in general:
  \begin{equation}
    Z_G(M) = \epsilon_{L,i_3} \circ N \circ \eta_{R,m}
  \end{equation}
  For a connection $a$ on the source and representation $F$ of $G
  \wquot G$ (the central groupoid of the top span), these maps do the
  following to a vector $v$ in the representation space $F(a)$:
  \begin{equation}
    \eta_{R,m}(F)(a) : v \mapsto \bigoplus_{[A]|m(y)\cong a} \frac{1}{\# Aut(y)} \sum_{h \in Aut(a)} h^{-1} \otimes h(v)
  \end{equation}
  and
  \begin{equation}
    \epsilon_{L,i_3}(F')(a') : \bigoplus_{[A] | i_3(A) \cong a'} h_A \otimes v \mapsto \sum_{[A] | i_3(A) \cong x} i_3(h_A)v
  \end{equation}
  (Objects $A$ are connections on $M$). So the composite gives:
  \begin{equation}
    Z_G(M)(F)(a)(v) = \bigoplus_{a' | i_3(A) \cong a',  m(A) \cong a} \frac{1}{\#Aut(A)} \sum_{[A]|i_3(A) \cong a'} \sum_{h \in Aut(A)} i_3(h^{-1})h(v)
  \end{equation}
  In particular, a connection $a \in G \wquot G$ is given by one group
  element $g \in G$, so this amounts to:
  \begin{equation}
    Z_G(M)(F)(g)(v) = \bigoplus_{g' \in G} \frac{1}{\# Aut(g',g'g^{-1})} \sum_{h \in Aut(g',g'g^{-1})} v
  \end{equation}
  Note that in this case, the only objects of $G^3 \wquot G$ with
  nontrivial contribution are those of the form $(g',g'g^{-1},1)$, and
  $i_3$ takes a gauge transformation represented by $h$ to one
  represented by the same $h$. Thus, $i_3(h^{-1})h(v) = v$.
\end{example}

\section{Twisting and 2-Linearization}\label{sec:twisting}

We now would like to see how our categorified quantization process
$\FV: \Span(\Gpd) \ra \iiV$ generalizes to a twisted 2-linearization,
$\FV^{U(1)}$.  Then our main result generalizes to a claim that the
twisted DW model is a composite of $\FV^{U(1)}$ and a ``classical
field theory'' valued in groupoids equipped with such cocycles.  These
classical field theories are then classified by choices $(G,\omega)$,
where $G$ is a finite group and $\omega \in H^3_{grp}(G,U(1)$ is a
cohomology class.  The ``quantization functor'' $\FV^{U(1)}$ will be
the same for all such ETQFTs.

Throughout this section, there will be several theorems which involve
the construction of \textit{symmetric monoidal bicategories}, and
symmetric monoidal 2-functors (elsewhere often called symmetric
monoidal homomorphisms) between them. We will not be completely
explicit about every aspect of these claims.

The literature on symmetric monoidal bicategories, and the 2-functors,
natural transformations, and modifications between them (which form a
tricategory) is somewhat scattered, and definitions are seldom given
in full explicit detail. One source relevant to the application to
TQFT is Chris Schommer-Pries' Ph.D. thesis \cite{csp-class-2d}, which
collects together all the basic definitions in one place, though even
this relies on coherence diagrams which must be found in works by
Gordon, Power, and Street \cite{GPS-tricat}, and by McCrudden
\cite{mccrudden}. A recent work by Stay \cite{stay-ccb} gives a
detailed definition and proves that a certain bicategory of spans,
related to but distinct from the one we use here, is a symmetric
monoidal bicategory. It is also worth noting that the definitions in
these works can be seen as a special case of the definition of a
tricategory, whose algebraic form is given by Gurski
\cite{gurski-tricat}.

Even to fully state these definitions is quite lengthy. In particular,
to fully define a symmetric monoidal bicategory requires a total of 13
pieces of data, including 1-morphisms such as associators, unitors,
and 2-morphisms such as that which takes the place of the pentagon
identity for associators in a monoidal category. These various cells,
in turn, must satisfy 7 coherence conditions expressed by the
commutation of various sizeable diagrams of 2-cells. This does not
even include the data implicit in the definition of a bicategory
without any defined monoidal structure. The definition of a symmetric
monoidal 2-functor likewise involves 7 pieces of data and 5 coherence
diagrams, beyond those defining a 2-functor (often called
``homomorphism'') between bicategories.

In what follows, we will not give all this data, nor verify all these
conditions explicitly. The reason is that the monoidal structure for
our categories is more strict than the completely general case. In
many cases, the data in our particular constructions are either just
identities, or else are derived from universal properties.  In these
cases, the coherence properties of such isomorphisms are automatically
satisfied because of such universal properties. Rather, the crucial
issue will be verifying that we have bicategories and 2-functors at
all, and this will occupy most of our attention.

\subsection{Cocycle Twisting as Homotopy QFT}\label{sec:hqftcocycle}

The above remarks imply that the action functional associated to
$\omega$ will be part of the classical theory, and $\FV^{U(1)}$ will
be the same 2-functor for all choices of $\omega$.  As we shall see,
the most natural way to do this uses a generalization of
$\Span(\Gpd)$, which we will call $\uispgpd$, in which groupoids may
come equipped with some $U(1)$-cocycle information, which $\FV^{U(1)}$
will respect.  This contains an isomorphic copy of $\Span(\Gpd)$ as
the sub-bicategory where this extra data is trivial - that is, all
cocycles take the constant value $1 \in U(1)$ - on which $\FV^{U(1)}$
restricts to $\FV$.

This cohomological aspect of the construction of TQFT from Lie groups
has been developed in detail by Freed, Hopkins, Lurie and Teleman
\cite{FHLT}. We remark that this framework focuses especially on
classifying TQFTs in dimension $n$ by means of an element of the
$n^{th}$ group cohomology of $G$, so since $\FV$ and $\FV^{U(1)}$ are
specifically 2-categorical our construction gives a version of this
theory which extends only to codimension-2.  The reconstruction of the
DW model when $n=3$ is of special interest since 1D manifolds are
sufficiently simple to be an interesting stopping point.

Recall that a group cohomology element is an element of the
ordinary cohomology of the classifying space:
\begin{equation}
  [\omega] \in H^3(BG,U(1))
\end{equation}
Its role is best understood in terms of the fact that $G$-connections
on a manifold $M$ correspond to homotopy classes of maps $A : M \ra
BG$ into the classifying space of $G$.  This applies to both manifolds
and cobordisms, so we may understand the role of $[\omega]$ in the
context of the Homotopy Quantum Field Theory of Turaev \cite{HQFT}.

An HQFT is rather like a TQFT, except that the source category
consists of manifolds and cobordisms \textit{equipped with maps} into
a target space $X$. Specifically, one has a category
$\catname{nCob}/X$, whose objects are $(n-1)$-manifolds equipped with
maps into target space $X$. The whose morphisms are cobordisms
equipped with maps to $X$ whose restriction to the boundary agrees
with the maps from the source and target objects. Then an HQFT in
Turaev's sense is a functor from $\catname{nCob}/X$ to
$\catname{Vect}$.

In the case where $X = BG$, this amounts to saying that an HQFT is an
assignment of vector spaces and linear maps to manifolds and
cobordisms equipped with a $G$-bundle with connection.  Such HQFTs are
classified by cohomology classes on $X$ ($BG$ in this case). An
equivalent way to say this is that they are classified by gerbes on
$X$ (since the cohomology class determines the gerbe). 

In a somewhat more specialized case, this has been described by Picken
\cite{picken}, as a ``rank-1 TQFT''. Picken's construction is given in
terms of gluing rules for manifolds with boundary, but is equivalent
to the categorical description in terms of composition of cobordisms.
Moreover, these are cobordisms are labelled with specific collections
of neighborhoods, so that the abstract cohomology class $[\omega]$ (of
the map into $BG$) is represented by concrete transition functions for
a gerbe, relative to these neighborhoods. Being rank-1 means that
these transition functions are valued in $U(1)$ - that is, that
$\omega$ is a $U(1)$-valued cocycle.

Picken (\cite{picken}, Theorem 4.6) proves that there is a 1-1
correspondence between such $U(1)$ gerbes, and a certain class of
rank-1, 2-dimensional TQFT's. In particular, the formulation given
there is helpful in understanding the composition in the new
bicategory $\uispgpd$ introduced in Section \ref{sec:uispgpd}, since
the assignments made using $\omega$ will satisfy certain gluing rules
which amount to the characterization of these TQFT's as functors.

In the untwisted situation, the classical field theory $\fc{-}$
assigns a groupoid $\fc{S}$ of all connections, which are given by
homotopy classes of maps into $BG$ (connections) for each manifold
$S$.  The exact correspondence is that the mapping space $Maps(S,BG)$
is the classifying space for this groupoid.  The classical field
theory when there is a cocycle $\omega$, which we call
$\fc{-}^{\omega}$, will produce the same groupoid. However, it also
associates cocycle information to that groupoid, which is derived from
$\omega$, as described below.  Then, where $\FV$ simply performs a sum
(or direct sum) over all the objects of this groupoid in a span, the
twisted form $\FV^{U(1)}$ adjusts these sums using this cocycle
information.

Now we describe the category of groupoids with cocycle which we need
to make this work.

\subsection{The Symmetric Monoidal Bicategory $\uispgpd$}\label{sec:uispgpd}

In a previous work \cite{catalgQM}, the author described a monoidal
category of groupoids with phases valued in $U(1)$, a special case of
groupoids with labels valued in a monoid $M$.  The motivation there
was to allow for a more physically realistic model of the quantum
harmonic oscillator in a category of groupoids and spans.  The
$U(1)$-phases were needed to get interesting time evolution operators.
This involved spans of groupoids equipped with phases derived from a
``number-operator'', which plays the role of a \textit{Hamiltonian}
for that system.  Now we want to describe a variation on this,
thinking of phases in the \textit{Lagrangian} sense, as
(exponentiated) actions rather than energies, but an analogous
structure is required.

We wish to extend our factorization $\FV \circ \fc{-} = Z_G(-)$ to
include the twisted case, where a nontrivial Lagrangian is present.
So there should be a factorization through a bicategory which contains
$\Span(\Gpd)$.  Our bicategory $\uispgpd$ will look like
$\Span(\Gpd)$, except that groupoids come equipped with some extra
data.

The simplest such data, most obviously related to Lagrangians in the
standard physical sense, is the assignment of an (exponentiated)
action in $U(1)$ to a history for a system.  That is, the
$U(1)$-element is assigned to an object of the groupoid in the middle
of a 2-morphism.  This function ought to be an invariant of
isomorphism classes of objects (physically indistinguishable histories
get the same action).  When we apply $\FV$ to a 2-morphism, we sum
over such ``histories''.  In \cite{catalgQM} it is explained how
groupoidification can be extended to replace groupoid cardinality with
a weighted sum:
\begin{equation}
  |(\mathcal{G},f)| = \sum_{[x] \in \underline{\mathcal{G}}} \frac{f(x)}{|\opname{Aut}(x)|}
\end{equation}
This is naturally found in $\mathbb{R}^+ \otimes U(1)$, which we map
into $\mathbb{C}$ (identifying all elements $(0,\phi)$ with $0 \in
\mathbb{C}$).  

This allowed the construction of a full complex Hilbert space in
\cite{catalgQM}.  Now, the role of cardinality in groupoidification arises
from the Nakayama isomorphism between the left and right adjoints of
the restriction functors.  In $Hom(\catname{1},\catname{1})$, as
discussed in \cite{gpd2vect}, this isomorphism simply becomes a
numerical factor, the groupoid cardinality.  Thus, as might be
expected, our $\FV^{U(1)}$ will incorporate the $U(1)$-valued
topological action into a twisting of the Nakayama isomorphism, at the
2-morphism level.

In building $\uispgpd$ as a monoidal bicategory, it is not sufficient
simply to take spans in $U(1)-\Gpd$: that is, groupoids with
$U(1)$-functions on them.  Instead, we need a different structure to
describe the appropriate ``categorification of the action
functional'', which will reproduce the twisted DW model.  The key
point is that $U(1)$ phases on objects can be understood as 0-cocycles
in groupoid cohomology.

With our overall aim in mind, we will define a bicategory in which
this classical process takes values:

\begin{definition}The symmetric monoidal bicategory $\uispgpd$ has:
  \begin{itemize}
  \item \textbf{Objects}: (essentially finite) groupoids $A$ equipped
    with 2-cocycles $\theta \in Z^2(A,U(1)$
  \item \textbf{1-Morphisms}: a morphism from $(A,\theta_A)$ to
    $(B,\theta_B)$ is a span of groupoids $A \stackrel{s}{\la} X
    \stackrel{t}{\ra} B$, equipped with a 1-cocycle $\alpha \in
    Z^1(X,U(1))$
  \item \textbf{2-morphisms}: a 2-morphism from $(X,\alpha,s,t)$ to
    $(X',\alpha ',s',t')$ in $Hom((A,\theta_A),(B,\theta_B))$ is an
    equivalence class of spans of span maps $X \la Y \ra X'$ equipped with
    0-cocycle $\beta \in Z^0(Y,U(1))$.

    (The equivalence is taken up to a $\beta$-preserving groupoid
    isomorphism, $(Y,\beta_Y) \rightarrow (Y',\beta_{Y'})$, which
    commutes with the source and target maps)
  \end{itemize}

  This data must satisfy the following conditions:
  \begin{itemize}
  \item In any 1-morphism
    \begin{equation}
      (X,\alpha,s,t) : (A,\theta_A) \ra (B,\theta_B)
    \end{equation}
    the cocycles satisfy
    \begin{equation}\label{eq:iicocycondition}
      s^{\ast}\theta_A = t^{\ast}\theta_B
    \end{equation}
    In particular, $[s^{\ast}\theta_A] = [t^{\ast}\theta_B]$
  \item In any 2-morphism
    \begin{equation}
      (Y,\beta,\sigma,\tau) : (X_1,\alpha_1,s_1,t_1) \Rightarrow (X_2,\alpha_2,s_2,t_2)
    \end{equation}
    the cocycles satisfy
    \begin{equation}\label{eq:icocycondition}
      (\sigma^{\ast} \alpha_1) (\tau^{\ast} \alpha_2)^{-1} = 1
    \end{equation}
    (In particular, $[\sigma^{\ast}\alpha_1] = [\tau^{\ast}\alpha_2]$,
    but moreover, since a 0-cocycle on a groupoid is an
    invariant function, $\delta \beta = 1 \in U(1)$ and the cocycles
    themselves are equal.)
  \end{itemize}
  The structures making $\uispgpd$ a monoidal bicategory are:
  \begin{itemize}
  \item Composition of 1-morphisms
    \begin{equation}
      (X_1,\alpha_1,s_1,t_1) : (A,\theta_A) \ra (B, \theta_B)
    \end{equation} and
    \begin{equation}
      (X_2,\alpha_2,s_2,t_2) : (B,\theta_B) \ra (C,\theta_C)
    \end{equation} at the object $(B,\theta_B)$ gives the
      same span of groupoids as in $\Span(\Gpd)$, and assigns the
      pullback object the cocycle
    \begin{equation}\label{eq:1mor-twisted-comp}
      \alpha_1 \cdot \alpha_2 \cdot \theta_B
    \end{equation} (explained below)
  \item Vertical composition of 2-morphisms:
    \begin{equation}
      (Y,\beta,\sigma,\tau) : (X_1, \alpha_1,s_1,t_1) \ra  (X_2, \alpha_2,s_2,t_2)
    \end{equation}
    and 
    \begin{equation}
      (Y',\beta ',\sigma ',\tau ') : (X_2, \alpha_2,s_2,t_2) \ra  (X_3, \alpha_3,s_3,t_3)
    \end{equation}
    at $(X_2,\alpha_2)$ gives the same groupoids as in $\Span(\Gpd)$,
    with the cocycle given by
    \begin{equation}\label{eq:2mor-twisted-comp}
      \beta \cdot \beta ' \cdot \alpha_2
    \end{equation}
  \item Horizontal composition of 2-morphisms:
    \begin{equation}
      (Y,\beta,\sigma,\tau) : (X_1, \alpha_1,s_1,t_1) \ra  (X_2, \alpha_2,s_2,t_2)
    \end{equation}
    in $Hom ( (A,\theta_A),(B,\theta_B))$, and 
    \begin{equation}
      (Y',\beta ',\sigma ',\tau ') : (X_1', \alpha_1',s_1',t_1') \ra  (X_2', \alpha_2',s_2',t_2')
    \end{equation}
    in $Hom ( (B,\theta_B), (C,\theta_C))$ at $(B,\theta_B)$ gives the
    same groupoids as in $\Span(\Gpd)$, with the cocycle given by
    \begin{equation}\label{eq:2mor-twisted-horiz}
      \beta \cdot \beta ' \cdot \theta_B
    \end{equation}
  \item The monoidal structure is given by
    \begin{equation}\label{eq:twisted-monoidal}
      (A,\theta_A) \otimes (B,\theta_B) = (A \times B, \theta_A \cdot \theta_B)
    \end{equation}
    on objects, and on morphisms and 2-morphsims in the same way at
    each position in the span or span-of-spans diagram.
  \end{itemize}
\end{definition}

We explicitly define the cocycle $\alpha_1 \cdot \alpha_2 \cdot
\theta_B$ in (\ref{eq:1mor-twisted-comp}) as follows.  First recall
that spans of groupoids are composed by taking the weak pullback of
$t_1$ and $s_2$, which is the iso-comma groupoid $t_1 \downarrow s_2$.
Its objects are triples $(x_1,f,x_2)$ where $f : t_1(x_1) \ra s_2(x_2)
\in B$, and its morphisms are pairs $(g_1,g_2) \in X_1 \times X_2$,
forming commuting squares in $B$:
\begin{equation}\label{eq:pullback-morphism}
\xymatrix{
t_1(x_1) \ar[r]^{f} \ar[d]_{s_1(g_1)} & s_2(x_2) \ar[d]^{t_2(g_2)} \\
t_1(x'_1) \ar[r]_{f'} & s_2(x'_2)
}
\end{equation}

For 1-morphisms $(X,\alpha,s,t) : (A,\theta_A) \ra (B,\theta_B)$, the
defining property of the 1-cocycle $\alpha$ is that it determines
a functor $\alpha : X \rightarrow U(1)$, where $U(1)$ is understood as
a groupoid with one object.  On the other hand, the 2-cocycle
$\theta_B$ is a map from pairs of morphisms $(f,f') \in B$ to $U(1)$
satisfying the 2-cocycle property, which ensures that ``twisting''
multiplication by $\theta_B$ remains associative if it was so
originally.

Then the functor $\alpha_1 \cdot \alpha_2 \cdot \theta_B : (t_1
\downarrow s_2) \rightarrow U(1)$ assigns the 1-morphism
(\ref{eq:pullback-morphism}) the value:
\begin{equation}
  \alpha_1(g_1) \cdot \alpha_2(g_2) \cdot \theta_B(f,f')
\end{equation}
(This is the meaning of ``twisting multiplication by $\theta_B$'' above).

The horizontal composition rule for 2-morphisms is similar, in that we
compose by weak pullback over $(B,\theta_B)$ for both 1-morphisms and
2-morphisms.  All the twistings by $\theta_B$ are compatible under the
maps of the 2-morphism's inner span.

The vertical composition rule for 2-morphisms is also similar,
except that the 0-cocycle is assigned to objects of $Y' \circ Y$.
These are again of the form $(y', f, y)$ where $f : \sigma ' (y') \ra
\tau(y) \in X_2$.  So the composite cocycle assigns this object the
value $\beta '(y') \cdot \alpha_2(f) \cdot \beta(y)$.

This composition rule may be surprising at first sight, in that one
might naively expect the cocycle on $Y' \circ Y$ to be $\beta' \cdot
\beta$, or on $X_2 \circ X_1$ to be $\alpha_2 \cdot \alpha_1$, so that
a 0-cocycle is a product only of 0-cocycles, and so on.  It is clear,
though, that objects in the $\tau \downarrow \sigma '$ contain a
morphism from $X_2$, or that $t_1 \downarrow s_2$ contains two
morphisms from $B$, so this rule is a consequence of the use
of the \textit{weak} pullback to compose spans of groupoids.  We will
describe this in terms of transition functions in the case of
groupoids of connections in Section \ref{sec:twistedfc}.  For the
moment, the intuitive idea is that in the composite of two spans of
groupoids, objects in the $Y$ or the $X_i$ need not match exactly as
in a fibered product, and some ``twisting'' is needed to align them
correctly.  As we will see, this is necessary to ensure that the
composition rules indeed determine cocycles on the groupoids found by
such weak pullback.

Of course, we must check that this structure indeed defines a
symmetric monoidal bicategory. To see this, it is useful to recall why
$\Span(\Gpd)$ is one.

\begin{lemma}
$\Span(\Gpd)$ is a symmetric monoidal bicategory.
\end{lemma}
\begin{proof}
  The main principle used is the fact that composition of spans is
  given by weak pullback, is defined by a universal property. The
  unique canonical maps from this property gives provides associator
  and unitor 2-cells for composition, which we do not name
  explicitly. They arise from the canonical maps from a weak pullback,
  expressed as spans with one identity leg. The fact that they are
  determined by a universal property ensures that they satisfy the
  necessary coherence conditions. For example, the pentagon identity
  asserts that two composites are equal, which in this case are both
  the same canonical map from the universal property of the pullback.

  The monoidal structure on $\Span(\Gpd)$, denoted by $\otimes$, is
  given on objects by the Cartesian product in $\Gpd$ (which is not a
  categorical product in $\Span(\Gpd)$). This extends naturally to
  spans. Again, a monoidal bicategory must have associator and unitor
  cells for the monoidal product. We take these to be precisely the
  canonical maps which come from the universal property of the product
  in $\Gpd$, interpreted as morphisms in $\Span(\Gpd)$ with one leg
  the identity map. For this reason, we do not explicitly name them.

  The definition of a monoidal bicategory then demands the existence
  of various 2-cells which take the place of the pentagon identity,
  unitor identites, and other equations which hold in monoidal
  categories for composites of associators and unitors. There is a
  slight complication in defining these, since the composite of
  1-morphisms, by weak pullback, is only defined up to an isomorphism
  2-cell.

  However, since the associator and unitor 1-morphisms come from maps
  in $\Gpd$, we may adopt the convention to choose the composite of
  the spans which comes directly from the composite of the corresponding
  maps. In this case, the pentagon and other coherence 2-cells in
  $\Span(\Gpd)$ are just the identities, since they compare two
  composites which give canonical isomorphisms that come from the
  universal property of the product. These coherence 2-cells will
  therefore automatically satisfy the axioms for a symmetric monoidal
  bicategory, which each relate two identity diagrams. (Different
  choices of composites will give 2-cells which differ from these by
  the isomorphisms to our canonical choice of composite.)

  The symmetry data for $\Span(\Gpd)$ is trivial in a similar
  way. The braiding 1-morphism $\beta : A \otimes B \ra B \otimes A$
  is just the span which comes from the canonical morphism in $\Gpd$
  which comes from the universal property of the product.

  A symmetric monoidal bicategory has two hexagonal 2-cells relating
  composites of $\beta$ and the associator (taking the role of the
  hexagon identity for symmetric monoidal categories). Again, this
  structure is strict and we can choose the identity maps, provided we
  make the canonical choice of composites. For any other choice of
  weak pullback, these 2-isomorphisms are the canonical isomorphism to
  that canonical choice. Again, identities will automatically satisfy
  the coherence condition, and thus so will these other choices in
  case we make non-canonical choices for composites of 1-morphisms.

  Finally, the braiding map is strictly symmetric, so as above, given
  canonical choices for the corresponding spans, the structural
  2-isomorphism $\sigma : I \ra \beta \circ \beta$ is just the
  identity.
\end{proof}

Structures in $\uispgpd$ are exactly those in $\Span(\Gpd)$, together
with cocycle data. The previous lemma shows that all the properties
for a symmetric monoidal bicategory hold when the cocycle data is
trivial. It remains to check that the properties still hold when
\textit{nontrivial} cocycle data is added.

\begin{lemma}
$\uispgpd$ is a symmetric monoidal bicategory.
\end{lemma}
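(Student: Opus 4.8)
The plan is to verify the axioms of a symmetric monoidal $2$-category directly, taking as given that the underlying span data — forgetting every cocycle label — already assembles into the symmetric monoidal $2$-category $\Span(\Gpd)$, together with all its coherence cells. The only genuinely new content is then that the extra cocycle labels are \textbf{(i)} well-defined under each of the three compositions and under the monoidal product, and \textbf{(ii)} transported compatibly by the associator, unitors, interchanger, braiding and all higher coherence cells inherited from $\Span(\Gpd)$. So I would organize the argument as: well-definedness of composition of $1$-morphisms; well-definedness of vertical and horizontal composition of $2$-morphisms; the coherences of a monoidal bicategory (associativity, unit laws, interchange, pentagonator, triangulator) now carrying cocycle data; the monoidal product $(\ref{eq:twisted-monoidal})$ with its associator and unitors; and finally the braiding together with the hexagon and symmetry axioms.

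First I would check well-definedness of the composite $1$-morphism. Its underlying span is the iso-comma groupoid $t_1 \downarrow s_2$, whose morphisms are the commuting squares $(\ref{eq:pullback-morphism})$, and the proposed label is the function $(\ref{eq:1mor-twisted-comp})$. I would show this really defines a $1$-cocycle, i.e.\ a functor $t_1 \downarrow s_2 \to U(1)$: multiplicativity along a vertically-stacked pair of squares follows from multiplicativity of $\alpha_1$ and $\alpha_2$ together with the $2$-cocycle identity for $\theta_B$ applied to the composable morphisms of $B$ that appear along the edges of the stacked square. One then checks that the boundary constraint $(\ref{eq:iicocycondition})$ passes to the composite $1$-morphism $(A,\theta_A)\to(C,\theta_C)$: the two pullbacks of $\theta_B$ along the naturally-isomorphic comparison functors $t_1\circ\mathrm{pr}_{X_1}$ and $s_2\circ\mathrm{pr}_{X_2}$ on $t_1\downarrow s_2$ differ by a coboundary because $\theta_B$ is a $2$-cocycle, which is exactly the cohomology-class statement in $(\ref{eq:iicocycondition})$. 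The corresponding statements for $2$-morphisms are easier: a $0$-cochain on a groupoid automatically satisfies $\delta\beta=0$, so the products in $(\ref{eq:2mor-twisted-comp})$ and $(\ref{eq:2mor-twisted-horiz})$ are automatically $0$-cocycles, and the constraint $(\ref{eq:icocycondition})$ propagates because the $1$-cocycles on the outer spans were assumed to agree under the span maps.

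Next I would treat the coherence cells. The associator, unitors and interchanger are those of $\Span(\Gpd)$, so the only thing to verify is that each canonical equivalence carries the cocycle produced by one bracketing or pasting to the cocycle produced by the other. For triple composition of $1$-morphisms this reduces, after cancelling the matched $\alpha_i$, to the assertion that ``twisting the composition law of $B$ by $\theta_B$'' is associative — which is precisely the $2$-cocycle condition on $\theta_B$ — and the pentagonator and triangulator conditions then hold on the nose. The unit laws follow the same way, using normalized representatives for the $2$-cocycles so that the identity $1$-morphism $A \stackrel{\id}{\la} A \stackrel{\id}{\ra} A$ with constant cocycle $1$ composes via the canonical equivalence $t\downarrow\id \simeq X$ without introducing spurious factors. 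For the monoidal product $(\ref{eq:twisted-monoidal})$, $\theta_A\cdot\theta_B$ — the product of the pullbacks of $\theta_A$ and $\theta_B$ along the two projections of $A\times B$ — is a $2$-cocycle because a product of pullbacks of cocycles is a cocycle, the unit object is the terminal groupoid with constant cocycle $1$, and the monoidal associator and unitors lift because the two reindexings of $\theta_A\cdot\theta_B\cdot\theta_C$ agree strictly. The symmetry is the swap $A\times B\to B\times A$, which intertwines $\theta_A\cdot\theta_B$ with $\theta_B\cdot\theta_A$, and the hexagon and symmetry axioms are inherited once one observes these swaps respect the cocycles.

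The main obstacle is not any single identity but the bookkeeping in the coherence step: because $(\ref{eq:1mor-twisted-comp})$, $(\ref{eq:2mor-twisted-comp})$ and $(\ref{eq:2mor-twisted-horiz})$ mix cocycles of three different degrees, one must track, through each weak (iso-comma) pullback, exactly which auxiliary morphisms of $B$ or of $X_2$ the cocycle $\theta_B$ or $\alpha_2$ is being evaluated on, and then confirm that in the interchange square — and in every pentagon and triangle — those evaluations telescope via the cocycle conditions. A clean way to discharge this is to recast the cocycle data in terms of transition functions, as is done for the groupoids of connections in Section \ref{sec:twistedfc}, where associativity of the twisted composition is manifest; as a consistency check one also verifies that $\uispgpd$ restricts on the nose to $\Span(\Gpd)$ when all cocycles are the constant $1$, confirming that no coherence cell has been mishandled.
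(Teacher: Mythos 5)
Your overall route coincides with the paper's: take the symmetric monoidal 2-category structure of $\Span(\Gpd)$ as given, then verify that the cocycle labels are well defined under the three compositions and the monoidal product and are carried along by the pre-existing coherence cells, with the 2-cocycle identity for $\theta_B$ doing the work at the 1-morphism level and the universal property of $\times$ together with commutativity of $U(1)$ handling the monoidal and symmetry structure. There is, however, a genuine flaw at the 2-morphism step. The claim that ``a $0$-cochain on a groupoid automatically satisfies $\delta\beta=0$'' is false: a $0$-cochain is an arbitrary $U(1)$-valued function on objects, $\delta\beta$ evaluated on a morphism $g$ is $\beta(t(g))\beta(s(g))^{-1}$, and $\beta$ is a $0$-cocycle precisely when it is invariant under isomorphisms --- which is how the 2-morphism data is defined. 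So the composite labels (\ref{eq:2mor-twisted-comp}) and (\ref{eq:2mor-twisted-horiz}) are not ``automatically'' $0$-cocycles. Their value on an object of the weak pullback involves evaluating the $1$-cocycle $\alpha_2$ (respectively, data coming from $\theta_B$) on the comparison morphism $f$ contained in that object, and a morphism of the pullback conjugates $f$ by images of the morphisms of $Y$ and $Y'$; invariance must therefore be checked using functoriality of $\alpha_2$, invariance of $\beta$ and $\beta'$, and the compatibility conditions (\ref{eq:icocycondition}). This is exactly the subtlety the paper emphasizes when explaining why the naive guess $\beta\cdot\beta'$ is not the right composition rule, and it is the nontrivial content of closure at the 2-morphism level; your proposal replaces it with a vacuous statement, so that step as written does not go through.

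Apart from this, your verification is at least as detailed as the paper's (you make explicit the pentagonator, triangulator, interchange and hexagon checks, and the transition-function picture of Section \ref{sec:twistedfc}, which the paper only gestures at), and your treatment of $1$-morphism closure via the $2$-cocycle identity, of the associator as a cocycle-preserving re-bracketing of the weak pullback, and of the monoidal product and symmetry matches the paper's argument. One caution about the extra check you add for composites of $1$-morphisms: condition (\ref{eq:iicocycondition}) as stated requires equality of cocycles, not merely of cohomology classes, so showing that the two pullbacks of $\theta_B$ to the iso-comma groupoid ``differ by a coboundary'' establishes only the class-level form of the condition. Since the two comparison functors to $B$ are naturally isomorphic but not equal, equality on the nose is not obviously available, so if you include this step you should either produce the strict identity or state explicitly that only the weaker, class-level condition is being propagated --- a point the paper's own proof does not address.
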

\begin{proof}
  We note that the underlying spans and spans-of-spans which define
  the data of the symmetric monoidal bicategory $\uispgpd$ will be
  exactly those which appear in $\Span(\Gpd)$. We need only define the
  associated cocycle data and see that the axioms still hold.

  Now, $\uispgpd$ is closed under composition of 1-morphisms if
  (\ref{eq:1mor-twisted-comp}) determines a 1-cocycle, namely
  $\alpha_1 \cdot \alpha_2 \cdot \theta_B$ is a functor from $(t_1
  \downarrow s_2)$ into $U(1)$.  Since $\alpha_1$ and $\alpha_2$ are
  functors, this follows precisely from the fact that $\theta_B$ is a
  2-cocycle, so that the twisted multiplication remains associative.
  In the same way, (\ref{eq:2mor-twisted-comp}) determines an
  invariant function on objects of $(\tau \downarrow \sigma ')$ (that
  is, a 0-cocycle), since $\alpha_2$ is a 1-cocycle, so its coboundary
  is trivial.

  Now, the structure morphisms (associator and unitors) for 1-morphism
  composition in $\uispgpd$ is just the same as those for
  $\Span(\Gpd)$, with the trivial cocycle (i.e. constant of value
  1). This is because the cocycle given by the two composition orders
  is preserved by the canonical isomorphism between the composites, so
  the identity (\ref{eq:icocycondition}) holds with the trivial
  cocycle. (A similar argument shows that composition of 2-morphisms
  is associative and has identities).
  
  Similarly, the fact that (\ref{eq:twisted-monoidal}) defines a
  monoidal product follows from the fact that the Cartesian product
  $\times$ on $\Gpd$ determines a monoidal product in $\Span(\Gpd)$,
  and multiplication is a monoid operation for $U(1)$.  The fact that
  this monoidal product is symmetric follows from the symmetry
  isomorphism from the universal property of $\times$ and
  commutativity of multiplication in $U(1)$.  The monoidal unit is
  $(\catname{1},1)$, as can easily be verified.
\end{proof}

An obvious but important fact is:

\begin{corollary}\label{cor:subcat}
  The symmetric monoidal bicategory $\uispgpd$ contains $\Span(\Gpd)$
  as a sub-symmetric monoidal bicategory.
\end{corollary}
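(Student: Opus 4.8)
The plan is to exhibit an explicit embedding $\iota : \Span(\Gpd) \hookrightarrow \uispgpd$ and to check that its image is a sub-symmetric-monoidal-2-category. On objects, $\iota$ sends a groupoid $A$ to the pair $(A, 1)$, where $1$ denotes the $U(1)$-valued cochain constant at $1$, which is trivially an element of $Z^2(A,U(1))$. On a span $A \xleftarrow{s} X \xrightarrow{t} B$, $\iota$ returns the same span equipped with the trivial $1$-cocycle $1 \in Z^1(X,U(1))$; on a class of spans of span maps $X \leftarrow Y \rightarrow X'$ it returns the same underlying data equipped with the trivial $0$-cocycle $1 \in Z^0(Y,U(1))$.

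First I would check that the image genuinely consists of legitimate $1$- and $2$-morphisms of $\uispgpd$, i.e. that the imposed cocycle conditions hold. Condition (\ref{eq:iicocycondition}) becomes $s^{\ast}1 = t^{\ast}1$, which is immediate since the pullback of a constant cochain along any functor is the same constant cochain; condition (\ref{eq:icocycondition}) becomes $(\sigma^{\ast}1)(\tau^{\ast}1)^{-1} = 1$, again immediate. So $\iota$ is well defined into $\uispgpd$.

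Next — and this is the only step that requires any real verification — I would show that the image of $\iota$ is closed under all three compositions and under the monoidal product, i.e. that the twisted composition rules of $\uispgpd$ restrict to the ordinary span composition of $\Span(\Gpd)$ on the image. For composition of $1$-morphisms the composite cocycle (\ref{eq:1mor-twisted-comp}) is $\alpha_1 \cdot \alpha_2 \cdot \theta_B = 1 \cdot 1 \cdot 1 = 1$ as a functor on the iso-comma groupoid $t_1 \downarrow s_2$, since each of the three factors is constant at $1$, and the underlying span is by definition the same weak pullback as in $\Span(\Gpd)$. Exactly the same computation handles vertical composition via (\ref{eq:2mor-twisted-comp}), horizontal composition via (\ref{eq:2mor-twisted-horiz}), and the monoidal product via (\ref{eq:twisted-monoidal}): in each case every cocycle appearing in the twisted formula is trivial, so the twisted rule collapses to the untwisted one and the result again lies in the image of $\iota$. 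Hence the image is closed under the operations of $\uispgpd$, and $\iota$ strictly preserves all of them.

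Finally I would observe that the associators, unitors, interchange isomorphisms, and symmetry isomorphism of $\uispgpd$ were constructed in the preceding Lemma precisely as the corresponding structure isomorphisms of $\Span(\Gpd)$ (re-bracketings of weak pullbacks, the symmetry of $\times$ on $\Gpd$, and so on), decorated with cocycle data that those isomorphisms tautologically preserve; restricted to the image of $\iota$, where all cocycles are trivial, these are literally the structure isomorphisms of $\Span(\Gpd)$. Therefore $\iota$ is a symmetric monoidal $2$-functor that is an isomorphism onto its image, which is the desired sub-symmetric-monoidal-2-category. The ``main obstacle'', such as it is, is purely the bookkeeping check in the previous paragraph that the twisted composition formulas degenerate to the untwisted ones; everything else is inherited directly from the structure already established for $\Span(\Gpd)$.
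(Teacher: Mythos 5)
Your proposal is correct and follows essentially the same route as the paper: embed $\Span(\Gpd)$ by equipping every groupoid, span, and span of spans with the constant cocycle $1$, then observe that the cocycle conditions hold trivially and that the twisted composition and monoidal formulas collapse to the untwisted ones, so the image is closed under all operations. Your version merely spells out the bookkeeping (checking conditions (\ref{eq:iicocycondition}), (\ref{eq:icocycondition}) and the composite-cocycle formulas) in more detail than the paper does.
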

\begin{proof}
  There is a fully faithful symmetric monoidal 2-functor embedding any
  object, morphism, or 2-morphism of $\Span(\Gpd)$ into $\uispgpd$
  taking any groupoid to the same groupoid equipped with the trivial
  cocycle which has the constant value 1, and leaving all maps in
  spans unchanged. This is clearly a functor, since all operations in
  $\uispgpd$ are just the same as those in $\Span(\Gpd)$ when cocycles
  are disregarded.  The image of this embedding is a sub-category since
  it contains all identities and the monoidal unit, and is closed
  under the composition and monoidal operations of $\uispgpd$.
\end{proof}

Knowing that $\uispgpd$ is a symmetric monoidal bicategory, we want to
construct the two symmetric monoidal 2-functors $\fc{-}^{\omega}$
(given a fixed 3-cocycle $\omega \in Z^3(BG,U(1))$), and $\FV^{U(1)}$.
We address these next.

\subsection{The Classical Field Theory}\label{sec:twistedfc}

Our construction of the ETQFT $Z_G^{\omega}$ corresponding to the
twisted DW model will use a generalization of the factorization
$Z_G(-) = \FV \circ \fc{-}$.  The quantization functor $\FV^{U(1)} :
\uispgpd \ra \iiV$ will always be the same, and the cocycle $\omega$
will modify only the classical field theory, by defining the cocycle
data in $\uispgpd$.

We will begin by describing the classical field theory component.

The "topological action" for the twisted DW theory comes from a
$U(1)$-valued class in group cohomology:
\begin{equation}
  [\omega] \in H^3_{gp} (G,U(1))
\end{equation}
which we can take as represented by some particular cocycle:
\begin{equation}
  \omega \in Z^3_{gp} (G,U(1))
\end{equation}
Now, group cohomology is just the usual third cohomology of the
classifying space of G, so in fact this says:
\begin{equation}
  \omega \in Z^3(BG, U(1))
\end{equation}
This is a function which, given a 3-cycle in the space $BG$, defines a
number in $U(1)$, satisfying the cocycle condition.  It is usual to
think of the classifying space defined simplicially, and therefore to
consider what $\omega$ does to 3-simplices.

Now, the classical part of the DW construction with associated
3-cocycle $\omega$ gives cocycles of different degree associated to
the groupoids of connections for manifolds of different dimension.  In
general, a $k$-dimensional cobordism will produce a groupoid (as
object, or part of a span) which has a $(3-k)$-cocycle associated to
it.  This data arises from the ``transgression'' of the original
cocycle, an algebraic structure explained very nicely by Willerton
\cite{willerton}.  We briefly summarize it here.

First, this concept uses the fact that a (flat) connection on a
$k$-dimensional manifold $M$ can be understood as a homotopy class
of maps into the classifying space, $f : M \ra BG$.  Recall that an
indirect definition of $BG$ is precisely this fact.  The classifying
space functor $B$ is right adjoint to the fundamental-groupoid functor
$\Pi_1$, so that in particular $Hom(\Pi_1(M),G) \cong Hom(M,BG)$,
where the second term consists of homotopy classes of maps of spaces.
The most important feature of $BG$ is that its fundamental group is
$G$ and all other homotopy groups are trivial.  More concrete
constructions of $BG$ for particular $G$ depend on exactly which
category of spaces $BG$ is considered to lie in.

One standard choice, used in the ``bar construction'' is that $BG$ is
a simplicial set.  It is constructed by taking a single base-point (if
$G$ is a group, or one base-point for each object if $G$ is a
groupoid), adding edges for each element of $G$, and then adjoining
higher-dimensional cells as necessary to make sure there are no higher
homotopy groups.  For instance, one would add: a triangular face
adjoining edges $f$, $g$ and $fg$ to make this loop contractible; a
tetrahedron between four such triangles expressing each associativity
relation; and so on.  For convenience, we take $BG$ as presented as
such a simplicial complex.  A 3-cocycle $\omega$ on $BG$ then gives a
value in $U(1)$ to each 3-simplex $\Delta_3$ in $BG$.  (A smooth
realization of $BG$ will treat this as an integral of some 3-form over
the 3-chain $\Delta_3$.  In general, when describing integration in
the group $U(1)$, we will treat it as the additive group
$\mathbb{R}/\mathbb{Z}$).

Now, a connection is given by a point in the space $Maps(M,BG)$.  This
space in turn is a simplicial complex, and in fact is the classifying
space of the groupoid of flat connections on $M$ (each isomorphism
class of objects corresponds to a connected component of this space
given by one of the base-points).  Since a point in $Maps(M,BG)$ is a
function, there is the evaluation map:
\begin{equation}
ev: M \times Maps(M,BG) \ra BG
\end{equation}
The image of $f : M \ra BG$, or rather of $M \times {f}$, under $ev$
is then a $k$-chain (perhaps degenerate) in $BG$, namely the image
$f(M)$.  If we take a $(3-k)$-simplex $\Delta_{3-k}$ in $Maps(M,BG)$,
then these images in $BG$ form a 3-dimensional subspace which looks
like $M \times \Delta_{3-k}$.  This can be decomposed into individual
simplices in $BG$.

But then, this means we have a $(3-k)$-cocycle on $Maps(M,BG)$, the
``transgression'' of $\omega$ to $Maps(M,BG)$, which is denoted:
\begin{equation}
\tau_M(\omega) \in H^{3-k}(Maps(M,BG),U(1))
\end{equation}
It is given by integrating $\omega$:
\begin{equation}\label{eq:transgression-defn}
\tau_M(\omega) = \int_M ev^{\ast}(\omega)
\end{equation}
It can be integrated over $\Delta_{3-k}$ to get an element of
$U(1)$. Our classical field theory will assign the cocycle
$\tau_M(\omega)$ to each groupoid $\fc{M}$.

So for 2-morphisms in $\catname{3Cob_2}$, which are 3-dimensional
cobordisms $M$ ("spacetimes with boundary"), this just amounts to an
action functional: a $U(1)$-valued function on connections.  In the
untwisted case, we have the constant function $\omega \cong 1$, and
thus $\tau_M(\omega) \cong 1$ also.  But for objects (1-dimensional
manifolds) and morphisms (2-dimensional cobordisms), we get different
data: respectively, 2-cocycles and 1-cocycles.

\begin{definition}\label{def:twistedconn}
  For a fixed finite group $G$ and group 3-cocycle $\omega$, the
  classical field theory is a symmetric monoidal 2-functor:
  \begin{equation}
    \fc{-}^{\omega} : \catname{3Cob_2} \ra \uispgpd
  \end{equation}
  which acts as follows:
  \begin{itemize}
  \item Objects: $\fc{B}^{\omega} = (\fc{B},\tau_B(\omega))$
  \item Morphisms: $\fc{S:B_1 \ra B_2}^{\omega} = (\fc{S}, \tau_S(\omega),
    i_1^*, i_2^*)$ (where the $i_j$ are the inclusion maps of the
    $B_j$ into $S$).
  \item 2-Morphisms: $\fc{M : S \ra S'}^{\omega} = ( \fc{M} , \tau_M(\omega),
    i^*, (i')^*)$, where again $i$ and $i'$ are inclusion maps of $S$
    and $S'$ into $M$.
  \end{itemize}
\end{definition}

This definition implicitly makes the assertion that this is a
symmetric monoidal 2-functor.  The first thing to check is that it
exists at all.

\begin{lemma}
  The construction for $\fc{-}^{\omega}$ gives well-defined maps for
  objects, morphisms, and 2-morphisms into $\uispgpd$.
\end{lemma}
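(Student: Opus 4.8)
The plan is to check, cell type by cell type, the three things a map into $\uispgpd$ must supply: that $\tau_M(\omega)$ is a genuine cocycle of the degree $3-\dim M$ demanded by the definition of $\uispgpd$; that the restriction maps $i^{\ast}$ behave so that the compatibility conditions (\ref{eq:iicocycondition}) and (\ref{eq:icocycondition}) hold on the nose; and that none of this depends on the auxiliary choices (a simplicial model for $BG$, a triangulation, a diffeomorphism representative of a cobordism). I would fix throughout the bar-construction model of $BG$ and the induced model of each mapping space $Maps(M,BG)$, so that $Maps(M,BG)$ is the classifying space of $\fc{M}$ (as noted above), the evaluation $ev$ is simplicial, and $\tau_M(\omega)$ of (\ref{eq:transgression-defn}) is literally a simplicial cochain on $\fc{M}$. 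This reduces the lemma to the naturality of $ev$ and the fiber-integration (Stokes) property of $\int_M$, i.e.\ to the transgression calculus of Willerton \cite{willerton}, whose explicit cocycle-level formulas I would cite rather than re-derive.

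First, degrees and the cocycle property. Since $ev^{\ast}\omega$ is a $3$-cocycle on $M\times Maps(M,BG)$ and fiber integration over the $M$-factor lowers degree by $\dim M$, the cochain $\tau_M(\omega)$ has degree $2$ for an object $B$, degree $1$ for a cobordism $S$, and degree $0$ for a $3$-cobordism $M$ --- exactly the degrees required of the $\theta$, the $\alpha$, and the $\beta$ in $\uispgpd$. That it is a cocycle follows from the Stokes-type identity expressing $\delta\tau_M(\omega)$, up to sign, as the restriction to $\fc{M}$ of $\tau_{\partial M}(\omega)$, using that $\omega$ is closed. For a closed $1$-manifold $B$ the right-hand side is empty, so $\tau_B(\omega)\in Z^2(\fc{B},U(1))$ outright. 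For $S$ and for $M$ the right-hand side is precisely the left-hand side of the relevant compatibility condition, so that $\tau_S(\omega)$ and $\tau_M(\omega)$ being cocycles is equivalent to those conditions --- the next step. (One also notes that a $0$-cochain on the groupoid $\fc{M}$ is a cocycle exactly when it is constant on isomorphism classes, which $\tau_M(\omega)$ is, being a homotopy invariant of the pair $(M,f)$.)

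Second, compatibility with boundary restriction. The inclusions $i_1,i_2\colon B_1,B_2\hookrightarrow S$ induce the legs of the span $\fc{S}$ by restricting flat connections, and under $ev$ the cochain $i_j^{\ast}\tau_{B_j}(\omega)$ is computed by integrating $ev^{\ast}\omega$ over the copy of $B_j$ inside $S$; hence $(i_1^{\ast}\tau_{B_1}(\omega))\,(i_2^{\ast}\tau_{B_2}(\omega))^{-1}$ is the integral over $\partial S$ with its induced orientation, i.e.\ the boundary term of the previous step, which (\ref{eq:iicocycondition}) asserts is trivial. Since a flat connection on $S$ is determined up to homotopy by its holonomy representation, $\fc{S}$ is equivalent to its skeleton, a disjoint union of stabilizer groups; restricted to one such automorphism group a degree-$1$ cocycle is a homomorphism to $U(1)$, and $i_1^{\ast}\tau_{B_1}(\omega)$, $i_2^{\ast}\tau_{B_2}(\omega)$ are two such homomorphisms which, after unwinding the transgression formula, coincide by the $3$-cocycle identity for $\omega$ --- the point at which that identity is used, as foreshadowed in the introduction. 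The same argument one degree higher --- now $\partial M$ consists of $S$, $S'$, and the collar faces $B_j\times I$, whose contributions vanish since a flat connection on $M$ restricts along a collar to one pulled back from the boundary --- gives (\ref{eq:icocycondition}). Finally, naturality of $ev$ and of $\int_M$ under diffeomorphisms shows that $\tau_M(\omega)$ transforms correctly under the diffeomorphisms by which cells of the cobordism $2$-category are identified, so the assignment descends to the equivalence classes used there, while a subdivision argument --- or the smooth de Rham comparison indicated near (\ref{eq:transgression-defn}) --- gives independence of the chosen simplicial structure and of triangulations.

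The main obstacle is not any single identity but making everything hold at the level of cocycles and strict equalities, as $\uispgpd$ demands, rather than merely in cohomology, where transgression is manifestly well behaved: one must commit to simplicial models of $BG$ and of every $Maps(M,BG)$ in which the restriction maps are strict simplicial maps, $ev$ is simplicial, and the fiber-integration formula holds without correction, tracking carefully the orientation signs that become inverses in $U(1)$ and the contribution of the corner strata of a $3$-cobordism. Pinning down one such mutually compatible package of models --- and verifying it is the same one under which the composition laws of $\uispgpd$ will make $\fc{-}^{\omega}$ functorial in the next lemma --- is where the real care lies, and is why it is cleanest to import Willerton's explicit transgression formulas wholesale.
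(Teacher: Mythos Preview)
Your overall strategy --- fiber integration plus a Stokes-type identity, citing Willerton for the explicit transgression formulas --- is the same as the paper's. The paper makes the Stokes step concrete by evaluating $s^{\ast}\theta_B - t^{\ast}\theta_{B'} = \pi^{\ast}\tau_{\partial S}(\omega)$ on a $2$-simplex $\Delta_2 \subset Maps(S,BG)$, decomposing $\partial(S\times\Delta_2) = (\partial S \times \Delta_2)\cup(S\times\partial\Delta_2)$, and using that $\omega$ is closed (so the integral over the full boundary vanishes) together with the fact that $\tau_S(\omega)$ is a cocycle (cited from Willerton, so its value on $\partial\Delta_2$ is zero). Your first paragraph captures exactly this, and your closing discussion of strict cocycle equalities versus cohomology classes, and of compatible simplicial models, is careful and addresses issues the paper simply asserts.

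There is, however, a concrete error in your second paragraph. The pullbacks $i_j^{\ast}\tau_{B_j}(\omega)$ are degree-$2$ cocycles on $\fc{S}$, not degree-$1$: each $\tau_{B_j}(\omega)$ has degree $3-\dim B_j = 2$, and pullback preserves degree. So when you restrict to an automorphism group in the skeleton of $\fc{S}$ you get group $2$-cocycles, not homomorphisms to $U(1)$, and your sentence ``restricted to one such automorphism group a degree-$1$ cocycle is a homomorphism to $U(1)$, and $i_1^{\ast}\tau_{B_1}(\omega)$, $i_2^{\ast}\tau_{B_2}(\omega)$ are two such homomorphisms'' is wrong as written. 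The proposed direct check ``by the $3$-cocycle identity for $\omega$'' therefore does not go through at that degree. This is not fatal --- the Stokes argument in your first paragraph already establishes (\ref{eq:iicocycondition}), and that is how the paper does it --- but the attempted independent verification in paragraph two should be struck or rewritten for $2$-cocycles, at which point it simply recapitulates the Stokes computation. There is also a minor logical slip: you write that the boundary term is what ``(\ref{eq:iicocycondition}) asserts is trivial'', but (\ref{eq:iicocycondition}) is what you are proving, not what you may assume; presumably you meant ``is what we must show is trivial''.
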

\begin{proof}
  We need to check that the image of $\fc{-}^{\omega}$ actually lies
  in $\uispgpd$.  It is well-known that transgression will yield
  cocycles (see e.g. \cite{willerton}), so we need to verify the
  conditions (\ref{eq:iicocycondition}) and (\ref{eq:icocycondition})
  for those cocycles.

  Suppose that $S : B \ra B'$ is a cobordism, so that $\partial S = B
  \sqcup B'$, and applying $\fc{-}^{\omega}$ we get the span:
  \begin{equation}
    \xymatrix{
      &  (\fc{S},\tau_S(\omega)) \ar[dl]_{s} \ar[dr]^{t} & \\
      (\fc{B},\tau_B(\omega)) & & (\fc{B},\tau_{B'}(\omega))
    }
  \end{equation}
  Then we want to verify that the cocycles are compatible, or in other
  words that $(s^{\ast}\theta_B) (t^{\ast}\theta_{B'})^{-1} = 1$.
  Restating this with the cocycle taking values in the additive group
  $\mathbb{R}/\mathbb{Z}$ (since we want to express the value in terms
  of an integral):
  \begin{equation}
    s^{\ast}\theta_B - t^{\ast}\theta_{B'} = 0
  \end{equation}

  But this is a 2-cocycle on $\fc{S}$ given by:
  \begin{eqnarray}
    &   & (s^{\ast}\theta_B) - (t^{\ast}\theta_{B'}) \\
    \nonumber & = & s^{\ast}(\int_B ev^{\ast}(\omega)) - t^{\ast}(\int_{B'} ev^{\ast}(\omega)) \\
    \nonumber & = & \pi^{\ast}(\int_{\partial S} ev^{\ast}(\omega)) \\
  \end{eqnarray}
  Here, we are using the fact that the orientation on $B$ and $B'$,
  the boundary components of $S$, are opposite, by convention and
  denoting by $\pi = s \otimes \bar{t}$ the projection map from
  $\fc{S}$ to $\fc{\partial S}$.  So this says the difference of
  $s^{\ast}\theta_B$ and $t^{\ast}\theta_{B'}$ is
  $\pi^{\ast}(\tau_{\partial S}(\omega))$.

  This is the pullback of a 2-cocycle on $Maps(\partial S, BG)$ to a
  2-cocycle on $Maps(S,BG)$.  Now suppose we evaluate it on a 2-chain
  $\Delta_2$ in $Maps(S,BG)$, which we take to be a 2-simplex.  (A
  similar proof would work for non-simplicial constructions of $BG$).
  Then:
  \begin{eqnarray}
    & \pi^{\ast}(\tau_{\partial S}(\omega)) [\Delta_2] \\
    \nonumber = & \int_{\partial S \times \Delta_2} ev^{\ast}(\omega) \\
    \nonumber = &  \int_{ev(\partial S \times \Delta_2)} (\omega)
  \end{eqnarray}
  This is an integral of $\omega$ on a 3-chain in $BG$ which is one
  part of:
  \begin{equation}
    \partial( S \times \Delta_2 ) = ( \partial S \times \Delta_2 ) \cup ( S \times \partial \Delta_2 )
  \end{equation}
  So if we evaluate on the whole 3-chain, we have:
  \begin{align}
    &  \int_{ev (\partial (S \times \Delta_2))} (\omega) \\
    \nonumber = & \int_{ev (( \partial S \times \Delta_2 ) \cup ( S \times \partial \Delta_2 ))} (\omega) \\
    \nonumber = & \int_{ev (( \partial S \times \Delta_2 )} (\omega) +
    \int_{ ev (( S \times \partial \Delta_2 ))} (\omega)
  \end{align}
  Now, since
  \begin{equation}
    \int_{ev(S \times \partial \Delta_2)} (\omega) = \tau_S(\omega) [\delta \Delta_2]
  \end{equation}
  which is the evaluation of the 1-cocycle $\tau_S(\omega)$ on a
  boundary, this part is equal to $0$.  However, by Stokes' theorem:
  \begin{equation}
    \int_{ev (\partial (S \times \Delta_2))} (\omega) = \int_{ev(S \times \Delta_2)} \delta (\omega)
  \end{equation}
  and since $\omega$ is a cocycle, this is again $0$.  Thus, we have
  \begin{equation}
    \pi^{\ast}(\tau_{\partial S}(\omega)) [\Delta_2] = 0
  \end{equation}
  so that finally $s^{\ast}\theta_B = t'^{\ast}\theta_{B'}$ as
  required.

  A similar argument with 1-cocycles and 0-cocycles holds at the level
  of 2-morphisms.

  Now we have that $\fc{-}^{\omega}$ gives well-defined maps of
  objects, morphisms, and 2-morphisms from $\catname{3Cob_2}$ into
  $\uispgpd$.
\end{proof}

The next lemma verifies that $\fc{-}^{\omega}$ is indeed a symmetric
monoidal 2-functor, but some preliminary remarks may be useful.

We know that $\fc{-} : \catname{3Cob_2} \ra \Span(\Gpd)$ is a
symmetric monoidal 2-functor, since it comes from taking maps into
$BG$. Functoriality follows because this is local, and turns (weak)
pushouts into (weak) pullbacks. To verify the analogous fact for
$\fc{-}^{\omega}$, we first need to check that the cocycle data makes
it well-defined.

As mentioned in Section \ref{sec:hqftcocycle} that the cocycle values
themselves associated to manifolds and cobordisms with connection, by
Picken's construction, define an HQFT.  The properties proved in
\cite{picken} amount to the fact that such an HQFT is a (symmetric)
monoidal functor into vector spaces (1-dimensional in this case) from
a category of manifolds and cobordisms which are equipped with a map
into a target space $X$, which in this case is $BG$.  Such HQFT are in
1-1 correspondence with gerbes on $BG$, which are determined by
cocycles such as $\omega$, which represent the curvature form for the
gerbe.

Given $\fc{-}^{\omega}$, the composition rule for $\uispgpd$ discussed
in the previous section gets a useful geometric interpretation.  In
the special case where 1-morphisms are manifolds without boundary,
seen as cobordisms from the empty manifold to itself, the cocycle data
for any groupoid of connections may be seen as a by a rank-1 embedded
2-dimensional TQFT with target $BG$ in the sense of Picken (definition
4.1 of \cite{picken}).  This may be understood as a (unitary) TQFT (or
rather, HQFT, since manifolds and cobordisms are equipped with maps to
$BG$) in which every vector space is just the 1-dimensional vector
space $\mathbb{C}$.  (Note that the adjective ``unitary'' is
unnecessary for finite groups, since every element has finite order so
all values are in fact roots of unity.)

Thus, one gets an element of $U(1)$ for each morphism of objects
(i.e. between 1-manifolds equipped with connection - in other words,
for a gauge transformation), by what Picken calls $Z'$ (part of $Z$ in
our terminology), and an element of $U(1)$ for each cobordism by
Picken's $Z$.  Then the composition rule for cobordisms is Picken's
gluing rule, which agrees with our composition rule
(\ref{eq:2mor-twisted-comp}) in that case.  The point here is that one
gets an extra contribution from the boundary $B$ where two manifolds
are being glued.  This is explained in \cite{picken} in terms of
transition functions (for a gerbe induced from the gerbe on $BG$
classified by $\omega$).  Essentially, one must make a gauge
transformation to ensure that connections on the two cobordisms being
glued actually match at the boundary.  We may also understand it by
thinking of the gauge transformation identifying the different
connections on $B$ as a mapping cylinder: two copies of $B$ with
connections in different gauge, are identified with the ends of a
cylinder $B \times I$.  This has a nontrivial connection where the
holonomies along the edge $b \times I$ for each point $b \in B$
conjugates an holonomy for a loop based at that point.

This is the extra morphism in the weak pullback contributing to this
composition, as described in Section \ref{sec:uispgpd}.  

This is the idea behind the proof of the following.

\begin{theorem}\label{thm:fc-sm2func}
  The construction $\fc{-}^{\omega}$ gives a symmetric monoidal
  2-functor.
\end{theorem}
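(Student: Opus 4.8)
The plan is to build directly on what has already been done: the underlying assignment $\fc{-} : \iiiCob \to \Span(\Gpd)$ is a symmetric monoidal 2-functor, and the preceding lemma shows that the transgressed cocycles $\tau_B(\omega)$, $\tau_S(\omega)$, $\tau_M(\omega)$ satisfy the membership conditions (\ref{eq:iicocycondition}) and (\ref{eq:icocycondition}), so $\fc{-}^{\omega}$ genuinely lands in $\uispgpd$. Since the composition, unit, and monoidal structures of $\uispgpd$ differ from those of $\Span(\Gpd)$ \emph{only} in the twisting of cocycles, all that remains is: (i) to check that the transgressed cocycles transform correctly under the twisted composition rules (\ref{eq:1mor-twisted-comp}), (\ref{eq:2mor-twisted-comp}), (\ref{eq:2mor-twisted-horiz}) and the monoidal rule (\ref{eq:twisted-monoidal}); and (ii) to check that the compositor, unitor, and monoidal-coherence 2-cells already supplied by $\fc{-}$ are cocycle-preserving, so that they serve as the coherence data for $\fc{-}^{\omega}$ and the 2-functor axioms reduce to those already known.

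The heart of the argument, and the step I expect to be the main obstacle, is compatibility with composition of 1-morphisms. Given $S : B_1 \to B_2$ and $S' : B_2 \to B_3$, we already know $\fc{S' \circ S} \simeq \fc{S'} \circ \fc{S}$ as spans, the right-hand side being the weak pullback (iso-comma groupoid) over $\fc{B_2}$. I must show that under this equivalence $\tau_{S' \circ S}(\omega)$ corresponds to $\alpha_1 \cdot \alpha_2 \cdot \theta_{B_2}$ with $\alpha_i = \tau_{S_i}(\omega)$ and $\theta_{B_2} = \tau_{B_2}(\omega)$. The geometric input is that $S' \circ S$ is the homotopy pushout $S \cup_{B_2} S'$, so $\mathrm{Maps}(S'\circ S, BG)$ is the homotopy pullback of the two mapping spaces over $\mathrm{Maps}(B_2,BG)$: a $1$-simplex in it is precisely a gauge transformation $f$ on $B_2$ glueing a connection on $S$ to one on $S'$, i.e.\ the datum appearing in the comma category (\ref{eq:pullback-morphism}). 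To compute $\tau_{S'\circ S}(\omega) = \int_{S'\circ S} ev^{\ast}(\omega)$ on such a simplex I would insert a mapping cylinder, writing the composite as $S \cup_{B_2} (B_2 \times I) \cup_{B_2} S'$ where the connection on $B_2 \times I$ records the glueing gauge transformation; additivity of the integral then splits it as $\int_S ev^{\ast}(\omega) + \int_{B_2\times I} ev^{\ast}(\omega) + \int_{S'} ev^{\ast}(\omega)$, the outer terms being $\alpha_1(g_1)$ and $\alpha_2(g_2)$, and a Stokes/cocycle computation of the same flavour as in the preceding lemma identifies the mapping-cylinder term with $\theta_{B_2}(f,f')$. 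This is exactly Picken's gluing rule \cite{picken}, and it matches (\ref{eq:1mor-twisted-comp}).

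The remaining verifications are lower-dimensional echoes of this one. For vertical composition of 2-morphisms $Y : X_1 \Rightarrow X_2$, $Y' : X_2 \Rightarrow X_3$ the composite is a weak pullback over $X_2$ whose objects carry an extra morphism of $X_2$; the same insert-a-cylinder argument, now integrating a $2$-chain, produces the $0$-cocycle $\beta \cdot \beta' \cdot \alpha_2$ of (\ref{eq:2mor-twisted-comp}). Horizontal composition of 2-morphisms is handled identically, with the glueing occurring over $(B,\theta_B)$, yielding (\ref{eq:2mor-twisted-horiz}). For units, the identity cobordism $B \times I$ deformation-retracts onto $B$, and naturality of transgression makes $\tau_{B\times I}(\omega)$ correspond to the trivial $1$-cocycle after this retraction, matching the identity $1$-morphism of $\uispgpd$; an analogous remark handles identity $2$-morphisms. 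For the monoidal structure, $\fc{B \sqcup B'} = \fc{B}\times\fc{B'}$ and additivity of integration over a disjoint union give $\tau_{B\sqcup B'}(\omega) = \tau_B(\omega)\cdot\tau_{B'}(\omega)$, which is (\ref{eq:twisted-monoidal}); since the symmetry of $\Span(\Gpd)$ is the coordinate swap, commutativity of multiplication in $U(1)$ shows it is cocycle-preserving. Finally, the compositor, unitor and monoidal-coherence $2$-cells of $\fc{-}$ arise from the universal property of the weak pullback and from rebracketings; as observed in the proof that $\uispgpd$ is a symmetric monoidal $2$-category, such isomorphisms automatically preserve cocycles, so they lift without change to the coherence data of $\fc{-}^{\omega}$, completing the proof.
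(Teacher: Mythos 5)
Your proposal is correct and follows essentially the same route as the paper: reduce to checking the cocycle data on top of the already-established 2-functor $\fc{-}$, verify the 1-morphism composition rule by interpreting the weak-pullback morphism data as a gluing gauge transformation realized by a mapping cylinder over $B_2$ (Picken's gluing rule), so that the extra factor is $\tau_{B_2}(\omega)$, with lower-degree analogues for 2-morphisms, and obtain the monoidal compatibility from additivity of the transgression integral over disjoint unions. Your slightly more explicit treatment of units, symmetry, and the cocycle-preservation of the coherence 2-cells only fills in steps the paper leaves implicit.
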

\begin{proof}
  First, we check that composition of 1-morphisms is preserved up to
  isomorphism.  Suppose $S : B_1 \ra B_2$ and $S' : B_2 \ra B_3$ are
  cobordisms.  Then
  \begin{equation}
    \fc{S' \circ S}^{\omega} =  (\fc{S' \circ S}, \tau_{S' \circ S}(\omega), i_1^*, i_3^*)
  \end{equation}
  On the other hand, since we have $\fc{B_2}^{\omega} = (\fc{B} ,
  \theta_{B_2})$ and $\theta_{B_2} = \tau_{B_2}(\omega)$, it follows
  from (\ref{eq:1mor-twisted-comp}) that:
  \begin{eqnarray}
    & \fc{S'}^{\omega} \circ \fc{S}^{\omega} \\ 
    \nonumber = & (\fc{S'} , \tau_{S'}(\omega), i_1^*, i_2^*)) \circ ( \fc{S}, \cdot \tau_S(\omega), i_2^*, i_3^*) \\
    \nonumber  \cong  & ( \fc{S' \circ S} , \tau_{S'}(\omega) \cdot \tau_S(\omega) \cdot \tau_{B_2}(\omega), I_1^*, I_3^* )
  \end{eqnarray}
  The composition of spans is just the weak pullback over $\fc{B_2}$.
  The $I_j$ are the inclusion maps of boundaries into the composite
  cobordism.

  Since we know $\fc{S' \circ S} \cong \fc{S'} \circ \fc{S}$, it
  suffices to check that
  \begin{equation}
    \tau_{S' \circ S}(\omega) = \tau_{S'}(\omega) \cdot \tau_S(\omega) \cdot \tau_{B_2}(\omega)
  \end{equation}
  under this identification. This is a 1-cocycle on the groupoid
  $\fc{S' \circ S}$ of connections on the composite.  That is, a
  $U(1)$-valued function on gauge transformations which respects their
  composition.

  Now, we are identifying the groupoid of connections on $S' \circ S$
  with the weak pullback of $\fc{S'}$ and $\fc{S}$ over $\fc{B_2}$ (to
  which it is naturally equivalent).  This means a connection on the
  whole space is determined by a pair of connections in $\fc{S'}
  \times \fc{S}$ identified by a gauge transformation between the
  restrictions of the connections to $B_2$.  (That is, there is a
  ``transition function'' specifying the change of gauge when gluing
  the connections at $B_2$).  A gauge transformation between two such
  objects is then a square of the form (\ref{eq:pullback-morphism}),
  and includes two gauge transformations from $\fc{B_2}$ - the
  transition function for the gauge transformations.  As in the
  discussion of Picken's HQFT above, this is assigned an element of
  $U(1)$, just as if we glued using a mapping cylinder $B_2 \times I$
  with a nontrivial connection.  This factor is precisely
  $\tau_{B_2}(\omega)$ by this construction.  The cocycle on $\fc{S'
    \circ S}$ is exactly this cocycle (pulled back through the
  equivalence with $\fc{S'} \circ \fc{S}$).

  A similar argument for 0-cocycles and gluing along 1-cocycles shows
  the composition of 2-morphisms is respected.  Together these imply
  the preservation of all identity 1- and 2-morphisms.

  It is straightforward to verify that there is a canonical
  isomorphism $\fc{A \sqcup B}^{\omega} \cong \fc{A}^{\omega} \otimes
  \fc{B}^{\omega}$ (often called $H$, as in \cite{csp-class-2d}).
  This is because the monoidal product in $\Span(\Gpd)$ is just the
  Cartesian product from $\Gpd$, and the same canonical isomorphism,
  seen as a span, will work here. Moreover, in the product, the
  cocycles simply multiply in $U(1)$.  On the other hand, the
  transgressed cocycles are
  \begin{eqnarray}
    \tau_{A \sqcup B}(\omega) = & \int_{A \sqcup B} ev^{\ast}(\omega) \\
    \nonumber  = & \int_A ev^{\ast}(\omega) + \int_B ev^{\ast}(\omega) \\
    \nonumber = & \tau_A(\omega) + \tau_B(\omega)
  \end{eqnarray}
  Since this sum is in $\mathbb{R}/\mathbb{Z}$, this is exactly what
  we expect.  Likewise, the monoidal unit is preserved up to a
  canonical isomorphism.

  These structural isomorphisms naturally satisfy all the coherence
  conditions for a functor of monoidal bicategories (as in
  \cite{GPS-tricat}) by the universal property, and the fact that the
  cocycles multiply strictly. A similar argument holds for the
  invertible modification relating the monoidal structure map $H$ and
  the symmetry map.

  Thus, $\fc{-}^{\omega}$ is a symmetric monoidal 2-functor.
\end{proof}

\subsection{The Twisted 2-Linearization Functor $\FV^{U(1)}$}

We have suggested that the twisted classical theory behind the DW
model takes values in $\uispgpd$.  We now want to understand the
twisted analog of $\FV$, the 2-linearization, or ``quantization''
functor $\FV^{U(1)}$.

The essential point is that we use the representations of the twisted
groupoid algebras such as $\mathbb{C}^{\theta_A}[A]$.  This is the
algebra of complex functions on morphisms of $A$, with the ``twisted''
multiplication:
\begin{equation}\label{eq:twistedproduct}
(F \star_A G)(f) = \sum_{g} F(g) G(g^{-1}f) \theta_A(g,g^{-1}f)
\end{equation}
The sum is taken over all morphisms $g \in A$ whose target is the
source of $f$: this is a twisted form of the usual convolution
product.  Given this notation, it is a standard fact that
representations of this twisted algebra correspond to so-called
``twisted representations'' $\rho$ of the groupoid itself, in which
the usual composition rule is replaced by $\rho(g_1) \circ \rho(g_2) =
\theta_A (g_1,g_2) \rho(g_1 \circ g_2)$.  It is also possible to
describe these as representations of a central extension of the
groupoid (more usual in the case of a group).  We will choose the
first of these descriptions for convenience.

So 2-cocycles on objects twist the representation categories that
appear as the output of $\FV$.  The 1-cocycles, as we will see, twist
the functors between them associated to spans, and the 0-cocycles
twist the natural transformations.

In particular, 1-morphisms will involve restriction and induction of
these twisted representations, for example pulling back along $s :
(X,\alpha_X) \ra (A,\theta_A)$ turns a representation of
$\mathbb{C}^{\theta_A}[A]$ into a representation of
$\mathbb{C}^{s^{\ast}\theta_A}[X]$.  By preceding arguments, this is the
same as $\mathbb{C}^{t^{\ast}\theta_B}[X]$ since $s^{\ast}\theta_A =
t^{\ast}\theta_B$).

It is possible to repeat what $\FV$ does to a span
$(X,s,t) : A \ra B$, which simply takes $t_{\ast} \circ s^{\ast}$:
pull back a representation to $X$ and push forward to $B$.  However,
if a cocycle $\alpha_X$ is present, we can ``twist'' this
identification of $\mathbb{C}^{s^{\ast}\theta_A}[X]$ with
$\mathbb{C}^{t^{\ast}\theta_B}[X]$ by $\alpha_X$.   This uses the map:
\begin{equation}
  M_{\alpha} : \mathbb{C}^{s^{\ast}\theta_A}[X] \ra \mathbb{C}^{t^{\ast}\theta_B}[X]
\end{equation}
which takes $f : X \ra \mathbb{C}$ to $\alpha \cdot f : X \ra
\mathbb{C}$.  Of course, this is actually an automorphism of one
algebra, since the twisting cocycles are actually equal by the
condition (\ref{eq:iicocycondition}).  It is convenient, however, to
represent $M_{\alpha}$ this way in what follows.  A well-known but
still crucial fact which we demonstrate here, is:

\begin{proposition}
 $M_{\alpha}$ is an algebra isomorphism.
\end{proposition}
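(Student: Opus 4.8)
The plan is to verify directly that $M_{\alpha}$ is a $\mathbb{C}$-linear bijection intertwining the two twisted convolution products. Spelling out (\ref{eq:twistedproduct}), the product on $\mathbb{C}^{s^{\ast}\theta_A}[X]$ sends $(F,G)$ to the function $f \mapsto \sum_g F(g)\,G(g^{-1}f)\,(s^{\ast}\theta_A)(g,g^{-1}f)$, with the sum running over the factorizations of $f$, and the product $\star'$ on $\mathbb{C}^{t^{\ast}\theta_B}[X]$ is the same with $t^{\ast}\theta_B$ in place of $s^{\ast}\theta_A$. Applying $M_{\alpha}$ to $F \star G$ scales the $f$-component by $\alpha(f)$, while $M_{\alpha}(F) \star' M_{\alpha}(G)$ has $f$-component $\sum_g \alpha(g)F(g)\,\alpha(g^{-1}f)G(g^{-1}f)\,(t^{\ast}\theta_B)(g,g^{-1}f)$; so the multiplicativity claim reduces to comparing these two sums term by term.

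The heart of the argument is then the pointwise identity $\alpha(f)\,(s^{\ast}\theta_A)(g,g^{-1}f) = \alpha(g)\,\alpha(g^{-1}f)\,(t^{\ast}\theta_B)(g,g^{-1}f)$, to be checked for every pair $g, g^{-1}f$ composing to $f$. First I would invoke condition (\ref{eq:iicocycondition}), $s^{\ast}\theta_A = t^{\ast}\theta_B$, to cancel the two $\theta$-factors; what is left is $\alpha(f) = \alpha(g)\,\alpha(g^{-1}f)$, which is exactly multiplicativity of $\alpha$ under composition. This holds because a $1$-cocycle on a groupoid is, as recalled just before the statement, precisely a functor $\alpha : X \to U(1)$ into the one-object groupoid $U(1)$. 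Hence $M_{\alpha}(F \star G) = M_{\alpha}(F) \star' M_{\alpha}(G)$.

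It remains to record bijectivity and unitality, which are routine. $M_{\alpha}$ is $\mathbb{C}$-linear, being pointwise multiplication by the fixed function $\alpha$; its two-sided inverse is $M_{\alpha^{-1}}$, where $\alpha^{-1}$ is the pointwise inverse of $\alpha$, well-defined because $\alpha$ has values in $U(1) \subset \mathbb{C}^{\times}$ and is again a functor to $U(1)$, so that $M_{\alpha^{-1}}$ really does map into $\mathbb{C}^{s^{\ast}\theta_A}[X]$. Finally, functoriality forces $\alpha(1_x) = 1$ on each identity morphism $1_x$, so $M_{\alpha}$ fixes the generators $\delta_{1_x}$ and hence preserves the unit of the (finite) groupoid algebra. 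Therefore $M_{\alpha}$ is an algebra isomorphism.

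I do not expect a genuine obstacle here. The only point that needs care is the source/target bookkeeping for the morphisms $g$ and $g^{-1}f$ in the convolution sum (and, if one insists on a fixed normalization of the unit of a twisted groupoid algebra, its precise form); neither affects the argument, since every identity invoked is a pointwise statement about morphisms of $X$ following from $\alpha$ being a functor together with (\ref{eq:iicocycondition}).
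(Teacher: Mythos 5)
Your proof is correct and takes essentially the same route as the paper's: a direct check that pointwise multiplication by $\alpha$ intertwines the two twisted convolution products, using $s^{\ast}\theta_A = t^{\ast}\theta_B$ from (\ref{eq:iicocycondition}) together with multiplicativity of the $1$-cocycle $\alpha$ (the paper phrases this as the vanishing of $\delta\alpha$ in its chain of equalities), and invertibility via the inverse $M_{\alpha^{-1}}$. Your extra remarks on linearity and preservation of the unit are fine and only make explicit what the paper leaves implicit.
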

\begin{proof}
  Clearly $M_{\alpha}$ is linear, so we check compatibility with the
  products.  Suppose $F,G : X \ra \mathbb{C}$, are thought of as
  elements of $\mathbb{C}^{s^{\ast}\theta_A}[X]$, with the product
  (\ref{eq:twistedproduct}).  Then applying $M_{\alpha}$, at $f \in X$
  we have:
  \begin{align}
    (M_{\alpha}(F) \star_B (M_{\alpha}(G)) (f) & =  \sum_{g} (\alpha \cdot F)(g) (\alpha \cdot G)(g^{-1}f) \theta_B(g,g^{-1}f) \\
    \nonumber    & = \sum_{g} \alpha(g) F(g) \alpha(g^{-1}f) G(g^{-1}f) \theta_B(g,g^{-1}f) \\
    \nonumber    & = \sum_{g} F(g) G(g^{-1}f) (\delta \alpha) (g,g^{-1}f)^{-1} \cdot \theta_A(g,g^{-1}f) \\
    \nonumber    & = \sum_{g} F(g) G(g^{-1}f) \alpha(f) \cdot \theta_A(g,g^{-1}f) \\
    \nonumber    & = \alpha(f) \cdot (F \star_A G) (f) \\
    \nonumber    & = (M_{\alpha}(F \star_A G))(f)
  \end{align}
  And indeed, since this map is plainly invertible with inverse
  $M_{\alpha^{-1}}$, this gives an isomorphism between the two
  algebras.
\end{proof}

This isomorphism induces a specific (contravariant) isomorphism
between the representation categories, by pre-composition:
\begin{equation}
M_{\alpha}^{\ast} : Rep(X,s^{\ast}\theta_A) \ra Rep(X,t^{\ast}\theta_B)
\end{equation}
We will use this in the construction for the 1-morphism map of
$\FV^{U(1)}$.

Just as 2-cocycles twist the objects (representation categories) and
1-cocycles twist the 1-morphisms (functors), so the 0-cocycles will
twist 2-morphisms (natural transformations).  The untwisted $\FV$ uses
the unit and counit for the adjunction between induction and
restriction functors of representations along groupoid homomorphisms.
There is still an adjunction for algebra representations, so this part
is much the same.  However, $\FV$ also uses the Nakayama isomorphism,
as in (\ref{eq:FV2mor-nakayama}).  This is a canonical choice of
isomorphism between the left and right adjoints to the restriction
functor.

However, since $\iiV$ is enriched in $\catname{Vect}_{\mathbb{C}}$, we
can of course ``twist'' this natural isomorphism by a scalar that
depends on a choice of object.  This is exactly the role of the
0-cocycle (which is physically interpreted as the complex-valued
``action'' for the configuration of our QFT that object represents).
\begin{definition}
  The ``twisted form'' of the Nakayama isomorphism:
\begin{equation}
  N_{\beta_Y} : \sigma_{\ast} \circ (M_{\sigma^{\ast}\alpha_1})^{\ast} \circ \sigma^{\ast} \Longrightarrow \tau_{\ast} \circ ( M_{\tau^{\ast}\alpha_2})^{\ast} \circ \tau^{\ast}
\end{equation}
which relates the ($\alpha$-twisted) forms of the left and right
adjunction acts at each object $y \in Y$ by:
\begin{equation}\label{eq:twistednakayama}
  N_{\beta_Y} : \bigoplus_{[y] | f(y) \cong x} \phi_y \mapsto \bigoplus_{[y] | f(y) \cong x} \frac{\beta_Y(y)}{\# Aut(y)}\sum_{g \in Aut(x)} g \otimes \phi_y(g^{-1})
\end{equation}
\end{definition}

This is just the same as the usual form, except for the factor of
$\beta_Y(y)$.  We note that this implicitly assumes that our spans of
span maps commute exactly - as in \cite{gpd2vect}, we might also need
to incorporate an explicit isomorphism up to which the diagram
commutes.  We note also that by (\ref{eq:icocycondition}), the maps
$(M_{\sigma^{\ast}\alpha_1})^{\ast}$ and
$(M_{\tau^{\ast}\alpha_2})^{\ast}$ are in fact equal, so again this
natural isomorphism is an automorphism.

Combining these twisted variants on the ingredients of $\FV$, we have
the following:

\begin{definition}\label{def:twistedlambda}
  The 2-functor
  \begin{equation}
    \Lambda^{U(1)} : \uispgpd \ra \iiV
  \end{equation}
  consists of the following assignments.
  \begin{itemize}
  \item \textbf{Objects}: $\Lambda^{U(1)}(A,\theta_A) = Rep(
    \mathbb{C}^{\theta_A}(A))$
  \item \textbf{Morphisms}: To a span $(X,\alpha_X,s,t) : (A,\theta_A)
    \ra (B,\theta_B)$ define a 2-linear map:
    \begin{equation}
      \FV^{U(1)}(X,\alpha_X,s,t) = t_{\ast} \circ (M_{\alpha_X})^{\ast} \circ s^{\ast}
    \end{equation}
    where $M_{\alpha_X} : \mathbb{C}^{s^{\ast} \theta_A}(X) \ra
    \mathbb{C}^{t^{\ast} \theta_B}(X)$ is the isomorphism of these
    groupoid algebras induced by multiplication by $\alpha_X$.
  \item \textbf{2-Morphisms}: to a 2-morphism $(Y,\beta_Y,\sigma,\tau)
    : (X_1,\alpha_1,s_1,t_1) \Rightarrow (X_2,\alpha_2,s_2,t_2)$
    assign the natural transformation:
    \begin{equation}\label{eq:fvui2mordef}
      \FV^{U(1)}(Y,\beta_Y,\sigma,\tau) = \epsilon_{L,\tau} \circ N_{\beta_Y} \circ \eta_{R,\sigma} : (t_1)_{\ast} \circ (M_{\alpha_1})^{\ast} \circ s_1^{\ast} \Longrightarrow (t_2)_{\ast} \circ (M_{\alpha_2})^{\ast} \circ s_2^{\ast}
    \end{equation}
  \end{itemize}
\end{definition}

\begin{remark}
  We have somewhat abused notation in order to write this in a
  balanced form.  Strictly speaking, we have that:
  \begin{equation}
    \eta_{R,\sigma} : \opname{Id}_{Rep(X_1,\alpha_1)} \Longrightarrow \sigma_{\ast} \circ \sigma{\ast}
  \end{equation}
  and similarly:
  \begin{equation}
    \epsilon_{L,\tau} :  \tau_{\ast} \circ \tau{\ast} \Longrightarrow \opname{Id}_{Rep(X_2,\alpha_2)}
  \end{equation}
  We have written them source and target, incorporating the
  multiplication operators $M_{\alpha_i}$ (and, though not written
  here, $M_{\sigma^{\ast}\alpha_1}$ and $M_{\tau^{\ast}\alpha_2}$).
  The point is just that
  \begin{equation}
    \sigma_{\ast} \circ (M_{\sigma^{\ast}\alpha_1})^{\ast} \circ \sigma^{\ast} \cong (M_{\alpha_1})^{\ast} \circ \sigma_{\ast} \circ \sigma^{\ast}
  \end{equation}
  and similarly for $\tau$.
\end{remark}

It may help to note that the cocycles at each level play somewhat
independent roles, in this construction, though with our specific
classical field theory $\fc{-}^{\omega}$ they are closely related via
transgression from $\omega$.  This close connection may be an
important part of the physical interpretation of this theory, and
ensures we have a functor from $\catname{3Cob_2}$, but it is not
essential to the ``quantization functor'' $\FV^{U(1)}$.  The
definition of $\FV^{U(1)}$ means we must have that
$s^{\ast}(\theta_A)$ and $t^{\ast}(\theta_B)$ differ by the coboundary
of $\alpha_X$ for the 2-linear map associated to a span to make sense
(see the proof of the Theorem \ref{thm:fvuifunctor} below).  However,
$\alpha_X$ must be a cocycle, hence has coboundary $0$.  So this is
simply the requirement that $s^{\ast}(\theta_A)t^{\ast}(\theta_B)^{-1}
= 1$ in the definition of $\uispgpd$.  A similar remark applies to the
2-morphisms.

This means that the deep underlying relation between the $\theta$,
$\alpha$, $\beta$ cocycles in our ETQFT is a property of the classical
field theory, not a requirement of the quantization functor.  Indeed,
part of the point of this factorization is that the quantization
functor contributes little to an understanding of the system: it
essentially looks at a specific representation in $\iiV$ of structures
already present in $\uispgpd$.  To say this is a ``representation'' is to
say precisely the following, which was implicitly stated in the above
definition:

\begin{theorem}\label{thm:fvuifunctor}
  The construction in Definition \ref{def:twistedlambda} determines a
  symmetric monoidal 2-functor
  \begin{equation}
    \FV^{U(1)} : \uispgpd \ra \iiV
  \end{equation}
\end{theorem}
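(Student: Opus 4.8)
The plan is to leverage the fact, established in \cite{gpd2vect}, that the untwisted $\FV : \Span(\Gpd) \ra \iiV$ is already a symmetric monoidal 2-functor, and to show that adjoining the cocycle data only modifies the structure maps by explicit $U(1)$-valued scalars and algebra twists, in a way that remains coherent. First I would check the assignments land in $\iiV$. On objects, the twisted groupoid algebra $\mathbb{C}^{\theta_A}[A]$ of an essentially finite groupoid is finite-dimensional, and $U(1)$-twisting of a semisimple algebra over $\mathbb{C}$ is again semisimple, so $Rep(\mathbb{C}^{\theta_A}[A])$ is a finitely semisimple $\mathbb{C}$-linear abelian category, i.e.\ a KV 2-vector space. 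On $1$-morphisms, $s^{\ast}$, $(M_{\alpha_X})^{\ast}$ and $t_{\ast}$ are each $2$-linear: $s^{\ast}$ and $t_{\ast}$ are the restriction and (co)induction functors along groupoid homomorphisms exactly as in $\FV$, now between categories of twisted representations, and $(M_{\alpha_X})^{\ast}$ is the equivalence induced by the algebra isomorphism of the Proposition above; their composite is therefore a $2$-linear map.

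For $2$-morphisms, condition (\ref{eq:icocycondition}) makes $(M_{\sigma^{\ast}\alpha_1})^{\ast} = (M_{\tau^{\ast}\alpha_2})^{\ast}$, as noted in the Remark following Definition \ref{def:twistedlambda}, so the twisted Nakayama isomorphism $N_{\beta_Y}$ has precisely the source and target required, and $\eta_{R,\sigma}$, $\epsilon_{L,\tau}$ are the (co)units of the ambidextrous adjunctions already used by $\FV$; the composite (\ref{eq:fvui2mordef}) is then a genuine natural transformation of the claimed type. Here one also records the commutation $\sigma_{\ast}\circ (M_{\sigma^{\ast}\alpha_1})^{\ast}\circ\sigma^{\ast} \cong (M_{\alpha_1})^{\ast}\circ\sigma_{\ast}\circ\sigma^{\ast}$, which says restriction and (co)induction commute with multiplication-by-cocycle twists up to the cocycle condition, and is what lets one write the source and target of $\FV^{U(1)}(Y)$ in the balanced form of Definition \ref{def:twistedlambda}.

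The substantive step is functoriality: producing the compositor $\beta^{U(1)} : \FV^{U(1)}(X_2 \circ X_1) \Rightarrow \FV^{U(1)}(X_2) \circ \FV^{U(1)}(X_1)$ and checking the associativity pentagon and the unit triangles. I would start from the untwisted compositor $\beta$ of $\FV$, which is built from the Beck--Chevalley base-change isomorphism for the weak pullback square defining $X_2 \circ X_1$. The twisted $2$-linear maps differ from the untwisted composites only by inserting $(M_{\alpha_1})^{\ast}$ and $(M_{\alpha_2})^{\ast}$; the content is that pushing these twists through the base-change square produces exactly multiplication by the composite cocycle $\alpha_1 \cdot \alpha_2 \cdot \theta_B$ on the iso-comma object $t_1 \downarrow s_2$, with the $\theta_B$-factor forced in because each object of the weak pullback carries a morphism $f : t_1(x_1) \ra s_2(x_2)$ of $B$ along which the restriction/coinduction must be intertwined --- this is the same bookkeeping as the composition rule (\ref{eq:1mor-twisted-comp}) and the ``twisted multiplication by $\theta_B$'' formula. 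Thus $\beta^{U(1)}$ is $\beta$ conjugated by these multiplication operators, hence an isomorphism, and its pentagon reduces to the pentagon for $\beta$ together with the $2$-cocycle identity for $\theta_B$, which is precisely the condition making the twisted multiplications re-bracket consistently. The unitors $U^{U(1)}_B$ come from those of $\FV$, using that the identity span on $(B,\theta_B)$ carries the trivial $1$-cocycle.

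For the remaining structure I would argue as in the Lemma that $\uispgpd$ is symmetric monoidal, but now at the level of $\iiV$. Vertical composition: the zig-zag identities for $\eta_{R,\sigma}$, $\epsilon_{L,\tau}$ collapse the composite of two instances of (\ref{eq:fvui2mordef}), and the scalar $\beta_Y \cdot \beta' \cdot \alpha_2$ on $Y' \circ Y$ prescribed by (\ref{eq:2mor-twisted-comp}) is exactly the scalar obtained by composing the two twisted Nakayama maps across the weak pullback $\tau \downarrow \sigma'$, the $\alpha_2$-factor coming from the extra morphism of $X_2$ present there. Horizontal composition and the interchange law are handled identically, with the $\theta_B$-factor of (\ref{eq:2mor-twisted-horiz}) appearing for the same reason. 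Monoidal coherence follows from $Rep(\mathbb{C}^{\theta_A}[A]) \otimes Rep(\mathbb{C}^{\theta_B}[B]) \simeq Rep(\mathbb{C}^{\theta_A \cdot \theta_B}[A \times B])$ together with the monoidal coherence of $\FV$, and symmetry from commutativity of multiplication in $U(1)$ and the symmetry of $\times$ in $\Gpd$. I expect the main obstacle to be organizational rather than conceptual: tracking, across every weak pullback used in composition, which twisted groupoid algebra each intermediate module lives over, and verifying the extra $\theta_B$- and $\alpha_2$-factors that the \emph{weak} (as opposed to strict) pullback forces into the coherence cells --- once the base-change isomorphism is set up carefully, each pentagon and hexagon reduces to its untwisted counterpart plus one cocycle identity.
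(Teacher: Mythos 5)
Your proposal is correct and follows essentially the same route as the paper: both reduce to the untwisted results of \cite{gpd2vect} (the base-change isomorphism for the weak pullback, the (co)unit/Nakayama construction, and the Deligne-product argument), and then verify that the twists $M_{\alpha}$, the factors $\beta_Y$ in $N_{\beta_Y}$, and the composite cocycles $\alpha_1\cdot\alpha_2\cdot\theta_B$ and $\beta\cdot\beta'\cdot\alpha_2$ arising from the weak pullbacks are compatible with these structures. The paper organizes this as the same sequence of lemmas (1-morphism composition, vertical/horizontal 2-morphism composition, symmetric monoidal structure) that you outline, so no substantive difference remains.
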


This is the twisted version of (\cite{gpd2vect}, Thm. 5), though here
we are also explicitly noting that the 2-functor is symmetric
monoidal.  Much of the proof is substantially the same as the
untwisted case.  We need to check several facts, so we will prove it
as a series of lemmas, corresponding to lemmas and theorems shown in
the untwisted case in \cite{gpd2vect}.  The proofs are similar, so we
will cite those at the appropriate place for brevity where there is
significant overlap and show only the distinct new parts of the
proofs.

Moreover, as in Theorem \ref{thm:fc-sm2func}, in the interest of clarity
we have not defined all of the specified structure morphisms which
play the role for bicategories of properties of symmetric monoidal
functors between categories. Because the construction of the 2-functor
provides natural choices for these morphisms, shall remark on what
these choices are as we prove the necessary parts of this theorem.

To begin with, it is clear that $Rep( C^{\theta_A}(A) )$ is a 2-vector
space, since it is the category of representations of a finite
dimensional complex algebra on complex vector spaces.  Similarly, the
functorial constructions for 1- and 2-morphisms ensure that we must
obtain 2-linear maps and natural transformations.  We must show that
these assemble into a symmetric monoidal 2-functor.

\begin{lemma}\label{lemma:fvui1morcomp}
  $\FV^{U(1)}$ preserves composition of 1-morphisms up to isomorphism.
\end{lemma}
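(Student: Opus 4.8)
The plan is to exhibit a natural isomorphism
\begin{equation}
\beta^{U(1)} : \FV^{U(1)}\bigl( (X_2,\alpha_2,s_2,t_2) \circ (X_1,\alpha_1,s_1,t_1) \bigr) \;\cong\; \FV^{U(1)}(X_2,\alpha_2,s_2,t_2) \circ \FV^{U(1)}(X_1,\alpha_1,s_1,t_1)
\end{equation}
refining the untwisted compositor $\beta$ of \cite{gpd2vect}. Write $P = t_1 \downarrow s_2$ for the iso-comma groupoid, with projections $\pi_1 : P \ra X_1$, $\pi_2 : P \ra X_2$, and $S_1 = s_1\pi_1$, $T_2 = t_2\pi_2$ for the legs of the composite span. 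Then the right-hand side is $(t_2)_\ast M_{\alpha_2}^\ast s_2^\ast (t_1)_\ast M_{\alpha_1}^\ast s_1^\ast$ and the left-hand side is $(T_2)_\ast M_{\alpha_1 \cdot \alpha_2 \cdot \theta_B}^\ast S_1^\ast$, with the composite cocycle as in (\ref{eq:1mor-twisted-comp}), so it suffices to identify these two $2$-linear maps. First I would recall the untwisted base-change (Beck--Chevalley) isomorphism underlying $\beta$ in \cite{gpd2vect}: for the weak pullback square over $B$ there is a natural isomorphism $s_2^\ast \circ (t_1)_\ast \cong (\pi_2)_\ast \circ (\pi_1)^\ast$.

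The one genuinely new point is that in the twisted setting this isomorphism must be read in the $\theta_B$-twisted representation categories, and there the two functors $\pi_1^\ast(t_1^\ast\theta_B)$ and $\pi_2^\ast(s_2^\ast\theta_B)$ are \emph{different} (though cohomologous) $2$-cocycles on $P$: they are related by the natural transformation whose component at an object $(x_1,f,x_2)$ of $P$ is the morphism $f : t_1(x_1)\ra s_2(x_2)$ of $B$. The explicit identification between them contributes a multiplication operator $M_c^\ast$, where the $U(1)$-function $c$ on the morphisms of $P$ assigns to a square (\ref{eq:pullback-morphism}) exactly the factor $\theta_B(f,f')$. Thus the correct twisted base change reads
\begin{equation}
s_2^\ast \circ (t_1)_\ast \;\cong\; (\pi_2)_\ast \circ M_c^\ast \circ (\pi_1)^\ast .
\end{equation}

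Granting this, the remainder is a diagram chase using three routine facts, each expressing multiplicativity of $U(1)$-twists through the untwisted structure: $(\pi_1)^\ast \circ M_{\alpha_1}^\ast \cong M_{\pi_1^\ast\alpha_1}^\ast \circ (\pi_1)^\ast$ (restriction commutes with twisting); $M_{\alpha_2}^\ast \circ (\pi_2)_\ast \cong (\pi_2)_\ast \circ M_{\pi_2^\ast\alpha_2}^\ast$ (twisted induction is transparent to multiplication by a pulled-back $1$-cocycle, since it is built from $\mathbb{C}[\opname{Aut}]$-tensor products and $\alpha_2$ restricts to a character on each $\opname{Aut}$, so this is a projection formula); and $M_\gamma^\ast \circ M_{\gamma'}^\ast = M_{\gamma\gamma'}^\ast$. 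Combining these with the twisted base change and $(\pi_1)^\ast s_1^\ast = S_1^\ast$, $(t_2)_\ast(\pi_2)_\ast = (T_2)_\ast$ gives
\begin{equation}
(t_2)_\ast M_{\alpha_2}^\ast s_2^\ast (t_1)_\ast M_{\alpha_1}^\ast s_1^\ast \;\cong\; (T_2)_\ast \, M_{\pi_1^\ast\alpha_1 \cdot c \cdot \pi_2^\ast\alpha_2}^\ast \, S_1^\ast ,
\end{equation}
and by the description of $c$ the cocycle $\pi_1^\ast\alpha_1 \cdot c \cdot \pi_2^\ast\alpha_2$ is, morphism by morphism, precisely the composite cocycle $\alpha_1\cdot\alpha_2\cdot\theta_B$ of (\ref{eq:1mor-twisted-comp}); this is exactly the left-hand side.

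The main obstacle is the identification of the twisted base-change isomorphism and the verification that its correction term is $\theta_B(f,f')$ on the square (\ref{eq:pullback-morphism})---i.e. that the $\theta_B$-factor built into composition in $\uispgpd$ (Section \ref{sec:uispgpd}, formula (\ref{eq:1mor-twisted-comp})) is precisely the one forced by the use of the \emph{weak} (iso-comma) pullback. I would carry this out explicitly by writing the $\theta_B$-twisted induction $(t_1)_\ast F$ at $b\in B$ as a direct sum over $x_1$ with $t_1(x_1)\cong b$ of $\mathbb{C}^{\theta_B}[\opname{Aut}(b)] \otimes_{\mathbb{C}^{\theta_B}[\opname{Aut}(x_1)]} F(x_1)$ and tracking how the choice of isomorphism $f$ enters the $\opname{Aut}(b)$-action when it is transported to $\opname{Aut}(s_2(x_2))$ along $s_2$; the twisted convolution then produces exactly the $\theta_B$-weight. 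Once this is in hand, naturality of $\beta^{U(1)}$ in the input representation, and its coherence with the associators and unitors, follow from the corresponding statements in \cite{gpd2vect}, since everything above the untwisted structure is a scalar ($U(1)$-valued) decoration that multiplies associatively.
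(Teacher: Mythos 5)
Your proposal is correct in substance and reaches the same endpoint as the paper, but it organizes the argument differently. The paper proceeds by citing the explicit compositor isomorphism of (\cite{gpd2vect}, Thm.~3) and re-running it verbatim in the twisted setting: it introduces the twisted automorphism algebras $\mathbb{A}_{x_1}, \mathbb{A}_{x_2}, \mathbb{A}_{t_1(x_1)}, \mathbb{A}_{x_1,x_2}$, writes the natural transformation stage-by-stage as a map $\mathbb{A}_{x_2}\otimes_{\mathbb{A}_{x_1,x_2}}\rho(x_1)\ra\mathbb{A}_{t_1(x_1)}\otimes_{\mathbb{A}_{x_1}}\rho(x_1)$ given on generators by $(k\otimes v)\mapsto s_2(k)g^{-1}\otimes v$, and then asserts that the $\theta_B$-twisting of the multiplications in $\mathbb{A}_{x_1,x_2}$ and $\mathbb{A}_{t_1(x_1)}$ is compatible with this map and with the algebra maps induced by $t_1$ and $s_2$. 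You instead factor the same content into a ``twisted Beck--Chevalley'' isomorphism $s_2^{\ast}(t_1)_{\ast}\cong(\pi_2)_{\ast}M_c^{\ast}(\pi_1)^{\ast}$ with an explicit $U(1)$-correction $c$, plus a projection formula and multiplicativity of the $M$-operators, so that the total twist $\pi_1^{\ast}\alpha_1\cdot c\cdot\pi_2^{\ast}\alpha_2$ matches the composite cocycle of (\ref{eq:1mor-twisted-comp}) essentially by definition. What your route buys is a conceptual explanation of \emph{why} the $\theta_B$-factor appears in the composition rule of $\uispgpd$ (it is forced by the weak pullback, because $t_1\pi_1$ and $s_2\pi_2$ only agree up to the comparison isomorphisms, so their pullbacks of $\theta_B$ are merely cohomologous); what the paper's route buys is that the only computation needed is the one already done in \cite{gpd2vect}, with the twists carried along as decorations. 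Note that the step you flag as the main obstacle---identifying the correction on a morphism (\ref{eq:pullback-morphism}) with ``$\theta_B(f,f')$''---is exactly the point the paper also treats by assertion rather than computation, and when you carry it out you should record the honest correction as a ratio of $\theta_B$-values on genuinely composable pairs built from $f$, $f'$ and the images of $g_1, g_2$ (since $f$ and $f'$ themselves are not composable); this matches the intended meaning of the paper's notation in (\ref{eq:1mor-twisted-comp}) and does not affect the conclusion.
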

\begin{proof}
  Note that this is the twisted version of (\cite{gpd2vect}, Thm. 3),
  which gives the corresponding isomorphism for the composites of
  spans in $\Span(\Gpd)$. This gives one of the structure maps for a
  weak 2-functor. As we shall see, we can inherit this structure from
  the untwisted version.

  Suppose we are given two spans in $\uispgpd$:
  \begin{equation}
    (X_1,\alpha_1,s_1,t_1) :  (A,\theta_A) \ra (B,\theta_B)
  \end{equation}
  and
  \begin{equation}
    (X_2,\alpha_2,s_2,t_2) :  (B,\theta_B) \ra (C,\theta_C)
  \end{equation}
  Then the composite is:
  \begin{equation}
    (X_2 \circ X_1, \alpha_2 \cdot \theta_B \cdot \alpha_1,s_1 \circ S,t_2 \circ T)
  \end{equation}
  The cocycle is that given in (\ref{eq:1mor-twisted-comp}), and
  $(X_2 \circ X_1, S, T)$ are the groupoid and maps in the weak
  pullback of the cospan $(B,t_1,s_2)$.  It is shown in
  (\cite{gpd2vect}, Thm. 3) that there is an isomorphism
  \begin{equation}
    \gamma : T_{\ast} \circ S^{\ast} \ra (s_2)^{\ast} \circ (t_1)_{\ast}
  \end{equation}
  for the untwisted representation categories.  It suffices to show a
  similar natural isomorphism between two functors:
  \begin{equation}
    T_{\ast} \circ M_{\alpha_2 \cdot \theta_B \cdot \alpha_1} \circ S^{\ast} , (s_2)^{\ast} (t_1)_{\ast} : Rep^{s_1^{\ast}\theta_A}[X_1] \ra Rep^{s_2^{\ast}\theta_B}[X_2]
  \end{equation}
  
  First, note that the induction and restriction functors for twisted
  representations are given by the usual formulas for modules of
  rings, and that the twisted groupoid algebras are characterized as a
  direct sum of twisted group algebras.  These are the group algebras
  for automorphism groups of the objects in the $X_i$ and $B$, with
  multiplication twisted by the relevant 2-cocycles $\theta$.  For
  clarity, we will use the following notation for these algebras that
  appear in the restriction and induction formulas:
  \begin{align}
    \mathbb{A}_{x_1} = & \mathbb{C}^{s_1^{\ast}\theta_A}[Aut(x_1)] \\
    \mathbb{A}_{x_2} = & \mathbb{C}^{s_2^{\ast}\theta_B}[Aut(x_2)] \\
    \mathbb{A}_{t_1(x_1)} = & \mathbb{C}^{\theta_B}[Aut(t_1(x_1))] \\
    \mathbb{A}_{x_1,x_2} = & \mathbb{C}^{(s_1 \circ S)^{\ast} \theta_A} [Aut(x_1) \times_{Aut(t_1(x_1))} Aut(x_2)]
  \end{align}
  Note that the cocycles mentioned are necessarily equal to others -
  for instance, $s_1^{\ast} \theta_A = t_1^{\ast} \theta_B$, and so
  on.)

  So this natural transformation can be expressed at a stage $x_1 \in
  X_1$ in terms of its action on a representation $\rho$.  This is a
  linear map between spaces which are expressed as a direct sum over
  $x_2 \in X_2$, and in each such summand we have:
  \begin{equation}
    \gamma_{x_1}(F) : \mathbb{A}_{x_2} \otimes_{\mathbb{A}_{x_1,x_2}} \rho(x_1) \ra \mathbb{A}_{t_1(x_1)} \otimes_{\mathbb{A}_{x_1}} \rho(x_1)
  \end{equation}

  This is simply the twisted case of the usual formulas given as (91)
  and (92) in \cite{gpd2vect}.  The isomorphism given there as (94)
  will still work in the twisted case.  In the current notation, it
  acts in the following way.  The algebra $\mathbb{A}_{x_1,x_2}$
  decomposes as a direct sum over all $g \in Aut(t_1(x_1))$ (since it
  is a group algebra of a fibre product).  In the summand associated
  to $g$ we define the isomorphism to act on the generators of
  $\mathbb{A}_{x_2} \otimes_{\mathbb{A}_{x_1,x_2}} \rho(x_1)$ by:
  \begin{equation}
    (k \otimes v) \mapsto s_2(k) g^{-1} \otimes v
  \end{equation}
  which extends to the whole space.  This is still well defined,
  though now uses the twisted multiplication in the group algebra
  $\mathbb{A}_{x_2}$.  In particular, the underlying vector spaces are
  identical to the untwisted case.  The proof that this is an
  isomorphism is substantially the same as in the untwisted case.  The
  main difference is that the cocycle $\theta_B$ enters into the
  twisting of the multiplication in $\mathbb{A}_{x_1,x_2}$ and
  $\mathbb{A}_{t_1(x_1)}$.  However, this twisting is compatible with
  the isomorphism $\gamma$ and the algebra maps induced by the
  homomorphisms $t_1$ and $s_2$ into $(B,\theta_B)$, so the proof is
  the same.

  This then extends to an isomorphism between the two 2-linear maps
  \begin{align}
    & \FV^{U(1)}( (X_2,\alpha_2,s_2,t_2) \circ (X_1,\alpha_1,s_1,t_1) ) \\
    \nonumber = & (T \circ t_2)_{\ast} \circ (M_{\alpha_1 \cdot
      \theta_B \cdot \alpha 2})^{\ast} \circ (S \circ s_1)^{\ast}
  \end{align}
  and
  \begin{align}
    & \FV^{U(1)}(X_2,\alpha_2,s_2,t_2) \circ \FV^{U(1)}(X_1,\alpha_1,s_1,t_1) ) \\
    \nonumber = & (t_2)_{\ast} \circ (M_{\alpha_2})^{\ast} \circ
    (s_2)^{\ast} \circ (t_1)_{\ast} \circ (M_{\alpha_1})^{\ast} \circ
    (s_1)^{\ast}
  \end{align}
\end{proof}

Now we need the analogous fact for composition of 2-morphisms:

\begin{lemma}
  $\FV^{U(1)}$ preserves vertical composition of 2-morphisms strictly
  and horizontal composition of 2-morphisms up to the structure
  isomorphism of Lemma \ref{lemma:fvui1morcomp}.
\end{lemma}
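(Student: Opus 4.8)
The plan is to reduce both statements to the corresponding untwisted identities established in \cite{gpd2vect} and then keep track of the $U(1)$-valued scalars that the cocycle data introduces. Recall from Definition \ref{def:twistedlambda} that $\FV^{U(1)}$ sends a $2$-morphism $(Y,\beta_Y,\sigma,\tau)$ to $\epsilon_{L,\tau}\circ N_{\beta_Y}\circ\eta_{R,\sigma}$; this differs from the untwisted $\FV$ only in that the pull and push functors are taken for the twisted groupoid algebras (carrying the multiplication operators $M_{\alpha_i}$) and in that the Nakayama isomorphism $N$ is rescaled to $N_{\beta_Y}$, stagewise by the $0$-cocycle $\beta_Y$ as in (\ref{eq:twistednakayama}). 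Thus each composite we must analyse has an underlying ``skeleton'' --- the composite of units, counits and Nakayama maps, which behaves exactly as in \cite{gpd2vect} --- together with a ``decoration'' by accumulated scalars, and the content of the lemma is that this decoration is exactly the one dictated by the composition rules (\ref{eq:2mor-twisted-comp}) and (\ref{eq:2mor-twisted-horiz}). Invariance under the equivalence on $2$-morphisms (up to $\beta$-preserving isomorphism of $Y$) is inherited from the untwisted case together with the fact that $\beta_Y$ enters $N_{\beta_Y}$ only through its values on isomorphism classes of objects.

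First I would treat vertical composition. Given composable $2$-morphisms $(Y,\beta,\sigma,\tau):(X_1,\alpha_1,s_1,t_1)\Rightarrow(X_2,\alpha_2,s_2,t_2)$ and $(Y',\beta',\sigma',\tau'):(X_2,\alpha_2,s_2,t_2)\Rightarrow(X_3,\alpha_3,s_3,t_3)$, the composite $2$-morphism is carried by the weak pullback $Y'\circ Y$ of $\tau$ and $\sigma'$ over $X_2$, with objects $(y,f,y')$, $f:\tau(y)\to\sigma'(y')\in X_2$, and $0$-cocycle $(y,f,y')\mapsto\beta(y)\,\alpha_2(f)\,\beta'(y')$. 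On the skeleton level, the proof of the vertical-composition law for $\FV$ in \cite{gpd2vect} shows that $\bigl(\epsilon_{L,\tau'}\circ N_{\beta'}\circ\eta_{R,\sigma'}\bigr)$ followed by $\bigl(\epsilon_{L,\tau}\circ N_{\beta}\circ\eta_{R,\sigma}\bigr)$ collapses --- by a triangle (zig-zag) identity for the ambidextrous adjunction and the compatibility of pull with push along the pullback square --- to a single $\epsilon_{L,\tau'\circ P'}\circ N\circ\eta_{R,\sigma\circ P}$ for the composite span, as an equality of natural transformations. It then remains to check that the scalar accumulated at each object $(y,f,y')$ is exactly $\beta(y)\,\alpha_2(f)\,\beta'(y')$: the factors $\beta(y)$ and $\beta'(y')$ come directly from the two twisted Nakayama isomorphisms, while the factor $\alpha_2(f)$ is produced in the middle, where $\eta_{R,\sigma'}$ meets $\epsilon_{L,\tau}$ and the two preimages are identified along the morphism $f$ of $X_2$; on that identification the operator $M_{\alpha_2}$ (present because $X_2$ carries the $1$-cocycle $\alpha_2$) contributes the value $\alpha_2(f)$. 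I would verify this last point by the stagewise computation that also proves the twisted Nakayama map in (\ref{eq:twistednakayama}) well-defined, using that $\delta\alpha_2=1$. Together with associativity of multiplication in $U(1)$, this gives $\FV^{U(1)}(Y'\circ Y)=\FV^{U(1)}(Y')\circ\FV^{U(1)}(Y)$ on the nose.

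Next I would treat horizontal composition, which runs entirely parallel. The two $2$-morphisms now lie over a common object $(B,\theta_B)$ --- $Y:X_1\Rightarrow X_2$ in $\opname{Hom}((A,\theta_A),(B,\theta_B))$ and $Y':X_1'\Rightarrow X_2'$ in $\opname{Hom}((B,\theta_B),(C,\theta_C))$ --- and their horizontal composite $Y'\circ_h Y:X_1'\circ X_1\Rightarrow X_2'\circ X_2$ carries the $0$-cocycle $\beta\cdot\beta'\cdot\theta_B$ by (\ref{eq:2mor-twisted-horiz}). On the skeleton, \cite{gpd2vect} gives the horizontal-composition law for $\FV$: whiskering of the two natural transformations agrees with $\FV$ of the composite $2$-morphism, up to the compositor of Lemma \ref{lemma:fvui1morcomp} applied to the source and target $1$-morphisms. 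The new scalar factor is exactly $\theta_B$, and it appears for precisely the reason it does in the composition of $1$-morphisms: the composite of the underlying spans (and of their span maps) is formed by the \emph{weak} pullback over $(B,\theta_B)$, which inserts a morphism of $B$ into each object and hence a value of $\theta_B$. Since the compositor $\gamma$ of Lemma \ref{lemma:fvui1morcomp} was built precisely to absorb this $\theta_B$-twist, conjugating the untwisted horizontal-composition isomorphism by $\gamma$ on the source $1$-morphism $X_1'\circ X_1$ and the target $1$-morphism $X_2'\circ X_2$ yields the required isomorphism in $\iiV$.

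The main obstacle will be the scalar-matching step for vertical composition --- pinning down that the middle of the zig-zag contributes exactly $\alpha_2(f)$, with the correct orientation of $f$ rather than its inverse, and that the normalization in (\ref{eq:twistednakayama}) for the composite $Y'\circ Y$ (dividing by the size of the automorphism group of each object and summing over the automorphism group of its image) is exactly what one obtains by composing the two separate twisted Nakayama maps. Here one must use that $M_{\alpha_2}$ is, as observed in the Remark following Definition \ref{def:twistedlambda}, genuinely an automorphism of a single twisted algebra (by condition (\ref{eq:iicocycondition}) for the $1$-morphism $X_2$), and one must be consistent with the orientation conventions of \cite{gpd2vect} for the weak pullback. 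Once those conventions are fixed, I expect the verification to involve no phenomenon beyond the bookkeeping already present for the untwisted Nakayama isomorphism, now decorated by cocycle values --- only care with the placement of inverses.
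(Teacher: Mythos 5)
Your proposal follows essentially the same route as the paper's proof: reduce to the untwisted composition lemmas of \cite{gpd2vect} and verify that the twisted Nakayama/cocycle factors ($\beta$, $\alpha_2$, resp.\ $\theta_B$ arising from the weak pullback) multiply according to the composition rules of $\uispgpd$, invoking the compositor of Lemma \ref{lemma:fvui1morcomp} for the horizontal case. If anything, your stagewise scalar bookkeeping at the zig-zag middle is more explicit than the paper's, which simply asserts that the pullback maps are compatible with the twisted multiplication.
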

\begin{proof}
  This is the twisted analog of (\cite{gpd2vect}, Lemma 4) and
  (\cite{gpd2vect}, Lemma 5).  The proofs are just the same except
  that we now have the factors $N_{\beta}$ in (\ref{eq:fvui2mordef}).
  It thus suffices that the $N_{\beta}$ are multiplicative under both
  horizontal and vertical composition of 2-morphisms.

  For vertical composition, suppose we are given 2-morphisms
  \begin{equation}
    (Y,\beta_Y,\sigma,\tau) : (X_1,\alpha_1,s_1,t_1) \Rightarrow (X_2,\alpha_2,s_2,t_2)
  \end{equation}
  and
  \begin{equation}
    (Y',\beta_{Y'},\sigma',\tau') : (X_2,\alpha_2,s_2,t_2) \Rightarrow (X_3,\alpha_3,s_3,t_3)
  \end{equation}
  (Note that the current notation is different from that of
  \cite{gpd2vect}, since here we use $(\sigma, \tau)$ instead of
  $(s,t)$, so that the structure maps for 2-morphisms are given by
  Greek letters and for 1-morphisms by Latin.)
 
  Then by (\ref{eq:2mor-twisted-comp}), we have: 
  \begin{align}\label{eq:2twisted-FV-comp}
    & \FV^{U(1)} ( (Y',\beta_{Y'},\sigma',\tau') \circ (Y,\beta_Y,\sigma,\tau) ) \\
    \nonumber  = & \FV^{U(1)} (Y' \circ Y, \beta_{Y'} \cdot \beta_Y \cdot \alpha_2, S \circ \sigma, T \circ \tau ') \\
    \nonumber = & \epsilon_{L,(T \circ \tau)} \circ N_{\beta_{Y'} \cdot \beta_Y \cdot \alpha_2} \circ \eta_{R,(S \circ \sigma)}
  \end{align}

  The terms appearing here are defined in and following Definition
  \ref{def:twistedlambda}.  On the other hand, we have:
  \begin{align}\label{eq:2twisted-comp-FV}
    & \FV^{U(1)} (Y',\beta_{Y'},\sigma',\tau') \circ \FV^{U(1)} (Y,\beta_Y,\sigma,\tau) ) \\
    \nonumber = & \epsilon_{L,(\tau ')} \circ N_{\beta_{Y'}} \circ
    \eta_{R,(\sigma ')} \circ \epsilon_{L,(\tau )} \circ N_{\beta_{Y}}
    \circ \eta_{R,(\sigma)}\\
  \end{align}
  This composite agrees with (\ref{eq:2twisted-FV-comp}) by a similar
  argument to that for 1-morphisms.  Namely, $S$ and $T$ are the maps
  for the weak pullback $(\sigma ' \downarrow \tau)$, and these maps
  are compatible with twisted multiplication.

  For horizontal composition, the proof is substantially the same as
  Lemma 5 of \cite{gpd2vect}, except that factors of $\beta$ appear in
  the sums, and twisting by $\theta_B$ makes the maps of the pullback
  compatible with the twisted multiplication.  The rest of the
  argument is substantially the same as for vertical composition.

  So we have that composition of 2-morphisms is preserved.
\end{proof}

\begin{lemma}
  The 2-functor $\FV^{U(1)}$ is naturally equipped with the structure
  of a symmetric monoidal 2-functor of bicategories.
\end{lemma}
\begin{proof}
  Here we must define some extra structure maps.  These arise
  naturally since tensor products essentially derive from $\times$ for
  groupoids.

First, (following
  \cite{gurski-tricat}) there should be an adjoint equivalences
  relating the monoidal units:
  \begin{equation}
    \iota : I \ra \FV^{U(1)}(I)
  \end{equation}
  relating the two monoidal units. Explicitly, this is:
  \begin{equation}
    \iota : \catname{Vect} \ra Rep(\mathbb{C})
  \end{equation}
  This is the obvious equivalence (indeed, isomorphism) of categories,
  since every vector space is automatically a representation of
  $\mathbb{C} = \mathbb{C}(1)$.

  Then we need natural adjoint equivalences relating monoidal products:
  \begin{equation}
    \chi{1,2} : \FV^{U(1)}(A_1,\theta_1) \otimes \FV^{U(1)}(A_2,\theta_2) \ra \FV^{U(1)}( (A_1,\theta_1) \otimes (A_2,\theta_2) )
  \end{equation}
  This comes from the natural inclusion:
  \begin{equation}
     \mathbb{C}^{\theta_1}[A_1] \otimes \mathbb{C}^{\theta_2}[A_2] \ra \mathbb{C}^{\theta_1 \cdot \theta_2}[A_1 \times A_2] 
  \end{equation}
  taking $f \otimes g$ to the function $fg(a,b) = f(a)g(b)$. It is
  easy to check this is an algebra homomorphism since $\otimes$ just
  multiplies cocycles, and the twisted multiplication acts
  independently in each factor. Furthermore, in finite dimensions,
  this is clearly an isomorphism, since any function is a finite
  linear combination of delta-functions supported on individual
  morphisms of the $A_i$, or of their product.
  
  The restriction and induction functors along this isomorphism are
  therefore an adjoint equivalence of categories.

  This means that we have:
  \begin{align}
    & \FV^{U(1)} ( (A_1, \theta_1) \otimes (A_2, \theta_2) ) \\
    \nonumber = & \FV^{U(1)} ( A_1 \times A_2 , \theta_1 \cdot \theta_2 ) \\
    \nonumber = & Rep^{\theta_1 \cdot \theta_2} [A_1 \times A_2] \\
    \nonumber = & Rep( \mathbb{C}^{\theta_1 \cdot \theta_2}[A_1 \times A_2]) \\
    \nonumber \cong & Rep( \mathbb{C}^{\theta_1}[A_1] \otimes \mathbb{C}^{\theta_2}[A_2] )
  \end{align}
  But this is generated by irreducible representations, and an
  irreducible representation of $\mathbb{C}^{\theta_1}[A_1] \otimes
  \mathbb{C}^{\theta_2}[A_2]$ is a tensor product of irreducible
  representations of $\mathbb{C}^{\theta_1}[A_1]$ and
  $\mathbb{C}^{\theta_2}[A_2]$.  So this is isomorphic to:
  \begin{align}
    & \FV^{U(1)} (A_1, \theta_1) \otimes \FV^{U(1)}(A_2,\theta_2) \\
    = & Rep^{\theta_1} [A_1] \otimes Rep^{\theta_2}[A_2]
  \end{align}
  with the tensor product the Deligne product of categories.  (For
  2-vector spaces, this looks just like the usual tensor product of
  vector spaces in terms of generators, up to isomorphism.)

  There must also be coherent invertible modifications which give
  isomorphisms of certain composites of these natural adjoint
  equivalences (see \cite{csp-class-2d}), and another relating
  $\chi$ and the braidings. These isomorphisms are quite natural, and
  a lengthy but straightforward check verifies the coherence
  conditions.
\end{proof}

Taking the above three lemmas together we have the proof of Theorem
\ref{thm:fvuifunctor}.  It is then immediate that this is an extension
of our original 2-linearization 2-functor to the larger category, in
the sense of the embedding noted in Corollary \ref{cor:subcat}.  That
is:

\begin{corollary}
  The restriction of $\FV^{U(1)}$ to $\Span(\Gpd) \subset \uispgpd$,
  is isomorphic with $\FV$.
\end{corollary}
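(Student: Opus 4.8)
The plan is to unwind the definitions and observe that, on the image of the embedding of Corollary~\ref{cor:subcat}, every ingredient of $\FV^{U(1)}$ degenerates to the corresponding ingredient of $\FV$, simply because all of the cocycle data is constant at $1 \in U(1)$. The only point requiring care is that the object-level comparison is an equivalence, not a literal equality, so one must carry it along and check it is compatible with all of the structure.

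First, on objects: the embedding sends a groupoid $A$ to $(A,\theta_A)$ with $\theta_A \equiv 1$, and the twisted groupoid algebra $\mathbb{C}^{\theta_A}[A]$ is then just the ordinary groupoid algebra $\mathbb{C}[A]$. Its category of representations is canonically equivalent to the functor category $\HV{A} = \FV(A)$ via the standard equivalence $\Phi_A : \mathrm{Rep}(\mathbb{C}[A]) \stackrel{\sim}{\longrightarrow} \HV{A}$ (modules over the groupoid algebra versus $\V$-valued representations of the groupoid), which is used implicitly throughout. The family $\Phi = (\Phi_A)$ will be the object-component of the desired symmetric monoidal 2-natural isomorphism. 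Next, on 1-morphisms: a span $(X,s,t)$ is embedded as $(X,1,s,t)$, so the twisting operator $M_{\alpha_X}$ appearing in Definition~\ref{def:twistedlambda} is the identity automorphism of $\mathbb{C}[X]$ and $(M_{\alpha_X})^{\ast}$ is (naturally isomorphic to) the identity functor. Hence $\FV^{U(1)}(X,1,s,t) = t_{\ast}\circ (M_1)^{\ast}\circ s^{\ast}$ coincides, after transporting along the $\Phi$'s, with $t_{\ast}\circ s^{\ast} = \FV(X,s,t)$; here one uses that induction and restriction of $\mathbb{C}[X]$-modules correspond under $\Phi$ to exactly the push and pull functors of $\FV$ recalled in Section~\ref{sec:iilin}.

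Similarly, on 2-morphisms, a span of span maps $(Y,\sigma,\tau)$ is embedded as $(Y,1,\sigma,\tau)$, so in the twisted Nakayama isomorphism (\ref{eq:twistednakayama}) the scalar factor $\beta_Y(y)$ is $1$ and $N_{\beta_Y}$ reduces to the ordinary Nakayama isomorphism $N$; since the units and counits of the untwisted ambi-adjunction also transport along $\Phi$ to those used by $\FV$, we obtain $\FV^{U(1)}(Y,1,\sigma,\tau) = \epsilon_{L,\tau}\circ N_1\circ \eta_{R,\sigma}$ identified with $\epsilon_{L,\tau}\circ N\circ \eta_{R,\sigma} = \FV(Y)$. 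Finally, for coherence and monoidality: in the composition rules (\ref{eq:1mor-twisted-comp}), (\ref{eq:2mor-twisted-comp}), (\ref{eq:2mor-twisted-horiz}) and in the monoidal product (\ref{eq:twisted-monoidal}), the extra correction factors ($\theta_B$, the $\alpha_i$, and $\theta_A\cdot\theta_B$) are all constant at $1$, so composition and the monoidal product of $\uispgpd$ restrict verbatim to those of $\Span(\Gpd)$; consequently the compositors, unitors, associator, and monoidal structure isomorphisms of $\FV^{U(1)}$ restrict, under $\Phi$, to those of $\FV$ (all of which were defined in \cite{gpd2vect} precisely in terms of the same induction/restriction functors and Nakayama maps). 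Assembling these checks, $\Phi$ is an invertible symmetric monoidal 2-natural transformation $\FV^{U(1)}|_{\Span(\Gpd)} \Rightarrow \FV$.

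I do not expect a genuine obstacle here: the only real work is bookkeeping — keeping track of the non-identity equivalence $\Phi_A$ and verifying that each structural isomorphism of $\FV^{U(1)}$ is carried to the corresponding one of $\FV$, which it is because every defining formula of $\FV^{U(1)}$ reduces, on the image of the embedding, to the defining formula of $\FV$. The mild subtlety worth spelling out is why $(M_1)^{\ast}$ may be suppressed and why the reduction of $N_{\beta_Y}$ to $N$ is strictly compatible with the adjunctions; both follow from $1$ being the unit of $U(1)$ and from $M_1 = \mathrm{id}$.
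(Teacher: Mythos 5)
Your proposal is correct and follows essentially the same route as the paper's own (brief) proof: specialize every definition to trivial cocycles, so that $M_{\alpha}$ and the twisting factor in the Nakayama map disappear, and use the canonical identification of representations of a finite groupoid with modules over its groupoid algebra to transport the data. Your version merely spells out the bookkeeping (the comparison equivalences $\Phi_A$ and the compatibility of compositors and monoidal structure) that the paper leaves implicit.
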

\begin{proof}
  This is a straightforward consequence of applying the definitions
  with trivial cocycles, and the fact that the representation category
  of a finite groupoid is canonically isomorphic to that of its
  groupoid algebra.
\end{proof}

There is also a different special case, which is not immediately
relevant to our ETQFT context, but which we will point out since it is
immediate.  This extends the fact that $\FV$ restricted to
$Hom(\catname{1},\catname{1})$ reproduces Baez-Dolan groupoidification
(shown in \cite{gpd2vect}).  The new special case incorporates the
$U(1)$-groupoids of \cite{catalgQM}.  Then a $U(1)$-groupoid span (or
``stuff operator'' in the sense of \cite{catalgQM}) is simply a
nontrivial 2-morphism of $\uispgpd$ in
$Hom((\catname{1},1),(\catname{1},1))$ between two 1-morphisms with
trivial cocycles.  (Moreover, \cite{catalgQM} only considered the case
where the central objects of these spans are always the groupoid of
finite sets and bijections.  This is not essentially finite, as in the
present case, but provided we restrict to situations where all sums
converge, the same ideas apply.)

Finally, as in the untwisted case, the matrix representations of the
2-linear maps are straightforward to describe:

\begin{proposition}\label{eq:twisted-matrix-rep}
Given a 1-morphism:
\begin{equation}
  (X,\alpha,s,t): (A,\theta_A) \ra (B,\theta_B)
\end{equation}
the 2-linear map $\FV^{U(1)}(X,\alpha,s,t)$ has matrix representation whose
  components are:
\begin{equation}\label{eq:twisted-2lin-components}
  \FV^{U(1)}(X,\alpha,s,t)_{\rho,\phi} = Hom_{Rep (\mathbb{C}^{s^{\ast}\theta_A} X)}((s^{\ast}\rho) , ((M_{\alpha})^{\ast} t^{\ast}\phi))
\end{equation}
\end{proposition}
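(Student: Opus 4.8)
The plan is to recognize this as the twisted counterpart of the Frobenius-reciprocity description of the matrix components of $\FV$ already used in the pair-of-pants computations (the formula $Z_G(Y)_{V,W}\cong\opname{Hom}_{Rep(\fc{Y})}((p_1)^{\ast}V,(p_2)^{\ast}W)$), and to derive it from three inputs: the classification of $2$-linear maps between Kapranov--Voevodsky $2$-vector spaces, the ambidextrous adjunction between induction and restriction of twisted groupoid-algebra modules that is used to build $\FV^{U(1)}$, and the fact (the Proposition immediately preceding this one) that $M_{\alpha}$ is an algebra isomorphism.

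First I would recall the general principle. Any $2$-linear map $T\colon\V^m\to\V^n$ between finitely semisimple $\mathbb{C}$-linear abelian categories is determined up to natural isomorphism by the matrix of vector spaces whose $(\rho,\phi)$ entry is the multiplicity space $\opname{Hom}_{\text{target}}(T(\rho),\phi)$, with $\rho$ ranging over the simple objects of the source and $\phi$ over those of the target. For $T=\FV^{U(1)}(X,\alpha,s,t)$ the source is $Rep(\mathbb{C}^{\theta_A}[A])$ and the target is $Rep(\mathbb{C}^{\theta_B}[B])$, both objects of $\iiV$ (as noted just after Theorem~\ref{thm:fvuifunctor}), whose simples are the irreducible $\theta_A$- (resp. $\theta_B$-) twisted representations; so it suffices to identify $\opname{Hom}_{Rep(\mathbb{C}^{\theta_B}[B])}\bigl(t_{\ast}(M_{\alpha})^{\ast}s^{\ast}\rho,\ \phi\bigr)$.

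Next I would unwind the definition of $\FV^{U(1)}$ on $1$-morphisms. By the ambidextrous adjunction between induction $t_{\ast}$ and restriction $t^{\ast}$ for modules over twisted groupoid algebras --- the same adjunction that underlies the construction and the compatibility checks in Lemma~\ref{lemma:fvui1morcomp} --- this Hom-space is naturally isomorphic to $\opname{Hom}_{Rep(\mathbb{C}^{t^{\ast}\theta_B}[X])}\bigl((M_{\alpha})^{\ast}s^{\ast}\rho,\ t^{\ast}\phi\bigr)$. The cocycle condition~(\ref{eq:iicocycondition}) gives $s^{\ast}\theta_A=t^{\ast}\theta_B$, so $Rep(\mathbb{C}^{s^{\ast}\theta_A}[X])$ and $Rep(\mathbb{C}^{t^{\ast}\theta_B}[X])$ are literally the same $2$-vector space; and since $M_{\alpha}$ is an algebra isomorphism, $(M_{\alpha})^{\ast}$ is an invertible endofunctor of it. Transporting the Hom-space along this invertible functor moves the twist onto the second argument and yields exactly~(\ref{eq:twisted-2lin-components}).

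The main difficulty is bookkeeping rather than depth. One must (i)~make sure the adjunction invoked is genuinely the twisted induction/restriction adjunction, so that the Frobenius-reciprocity isomorphism is natural and no hidden cocycle factor is dropped; (ii)~check that commuting $(M_{\alpha})^{\ast}$ past $t_{\ast}$ and into the Hom-space is compatible with the twisted multiplications, which is precisely the compatibility established in the proof that $M_{\alpha}$ is an algebra isomorphism together with the equality $s^{\ast}\theta_A=t^{\ast}\theta_B$; and (iii)~keep careful track of the variance and direction of $(M_{\alpha})^{\ast}$, which is defined by precomposition and is therefore contravariant in the underlying algebra map --- writing everything in terms of the single algebra $\mathbb{C}^{s^{\ast}\theta_A}[X]$, on which $M_{\alpha}$ acts as an automorphism, removes this ambiguity and makes the placement of the twist on $t^{\ast}\phi$ canonical. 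Once these points are pinned down the argument is the same string of natural isomorphisms as in the untwisted case treated in \cite{gpd2vect}.
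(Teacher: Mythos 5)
Your proposal is correct and follows essentially the same route as the paper: Frobenius reciprocity for the twisted induction/restriction adjunction along $t$, followed by transporting the Hom-space along the invertible functor $(M_{\alpha})^{\ast}$ (using that $M_{\alpha}$ is an algebra automorphism since $s^{\ast}\theta_A = t^{\ast}\theta_B$) to place the twist on the second argument. The extra care you take in spelling out the matrix-component principle for $2$-linear maps and the variance of $(M_{\alpha})^{\ast}$ is left implicit in the paper but is the same argument.
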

\begin{proof}
  As in the untwisted case, this uses Frobenius reciprocity, in this
  case for representations of algebras.  The operation of pulling back
  a representation $\phi$ of $\mathbb{C}^{\theta_B}[B]$) is adjoint to
  the operation of pushing-forward a representation of
  $\mathbb{C}^{t^{\ast}\theta_B}[X]$.

  As in the untwisted case, the $\FV^{U(1)}(X,\alpha,s,t)$ takes a
  $\theta_A$-twisted representation of $A$, and pulls back to $X$,
  then pushes forward to $B$.  The difference is that we apply the map
  $M_{\alpha}$ between these steps.  By Frobenius reciprocity, we have
  the intertwiner space:
  \begin{eqnarray}
    &  Hom_{Rep (\mathbb{C}^{\theta_B}[B])}( t_{\ast} \circ M_{\alpha}^{\ast} \circ s^{\ast} \rho , \phi ) \\
\nonumber    = & Hom_{Rep (\mathbb{C}^{t^{\ast}\theta_B}[X])}( M_{\alpha}^{\ast} \circ s^{\ast} \rho , t^{\ast}\phi ) \\
\nonumber    = & Hom_{Rep (\mathbb{C}^{s^{\ast}\theta_A}[X])}( s^{\ast} \rho ,  (M_{\alpha})_{\ast} \circ t^{\ast}\phi )
  \end{eqnarray}
\end{proof}

So given irreducible twisted representations $\rho$ and $\phi$ of $A$
and $B$, (i.e. irreducible representations of the twisted groupoid
algebras), we find a component in a matrix for a 2-linear map as an
intertwiner space between the pulled-back representations
$s^{\ast}\rho$ and $t^{\ast}\phi$.  These are twisted representations of X, but a
priori we have that $s^{\ast}\rho$ is twisted by the cocycle
$s^{\ast}(\theta_A)$, and $t^{\ast}\phi$ is twisted by
$t^{\ast}(\theta_B)$.

We note that in principle, given two representations twisted by
different cocycles, we would take the vector space of global sections
of:
\begin{equation}\label{eq:twistedbundle}
\bar{(s^{\ast}\rho)} \otimes (t^{\ast}\phi)
\end{equation}
This is a vector bundle on the objects of $X$, and when the cocycles
coincide, it corresponds to the usual hom space.  The ``bar'' means we
take the dual representation of $s^{\ast}\rho$, which is a
$(s^{\ast}\theta_A)^{-1}$-twisted rep of X, so the tensor product
(\ref{eq:twistedbundle}) is a $(s^{\ast}\theta_A)^{-1} \times
(t^{\ast}\theta_B)$-twisted representation of X.

This suggests that a further generalization of our $\uispgpd$ may be
possible in which the condition (\ref{eq:iicocycondition}) can be
weakened.  We might only require the $\alpha$ be a cochain, and that
$s^{\ast}\theta_A$ and $t^{\ast}\theta_B$ should differ by the
coboundary of a cochain $\alpha$.  This would ensure that $M_{\alpha}$
still induces an algebra isomorphism, but not necessarily an
automorphism.  If the condition were even weaker, the spaces of
sections (\ref{eq:twistedbundle}) would not correspond to intertwiner
spaces.  A similar generalization should be possible for condition
(\ref{eq:icocycondition}), so that the coboundary of $\beta_Y$ gives
the difference between pullbacks of $\alpha_1$ and $\alpha_2$.

This generalization, however, is not necessary for our construction of
this ETQFT, so we will not consider it further here.

\subsection{Twisted ETQFT}

Finally, our main result asserts that the DW model can
be understood as factorized into the classical field theory and the
2-linearization ``quantization functor'' we have just defined.

The theory itself, as described originally by Dijkgraaf and Witten
\cite{DW}, is given in more explicit detail by Freed and Quinn
\cite{freed-quinn}, particularly in the situation of manifolds with
boundary, which is the case where an ETQFT is most appropriate.  This
is the description to which we will refer here when speaking of the DW
model.  In particular, much of the description we use is in
(\cite{freed-quinn}, Sec. 4) which describes its construction as a
modular functor.

We will describe how it is derived from the 2-functor we
have given as our ETQFT.  

\begin{theorem}\label{thm:maintheorem}
  Given a finite gauge group $G$ and 3-cocycle $\omega \in
  Z^3(BG,U(1))$, the symmetric monoidal 2-functor
  \begin{equation}
    Z_G^{\omega} =  \FV^{U(1)} \circ \fc{-}^{\omega} : \catname{3Cob_2} \ra \iiV
  \end{equation}
  reproduces the DW model with twisting cocycle
  $\omega$.
\end{theorem}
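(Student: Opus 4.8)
The plan is to observe first that, since both factors are already known to be symmetric monoidal 2-functors --- $\fc{-}^{\omega} : \iiiCob \ra \uispgpd$ by the previous theorem, and $\FV^{U(1)} : \uispgpd \ra \iiV$ by Theorem \ref{thm:fvuifunctor} --- the composite $Z_G^{\omega}$ is automatically such a 2-functor. So the content of the theorem is purely the identification of its values with those of the twisted DW model as presented in section 4 of \cite{freed-quinn}. The strategy mirrors the proof of Theorem \ref{thm:DWETQFT}: restrict to the sub-bicategory $Aut(\emptyset) \simeq \iiCob$ sitting inside $\iiiCob$ (where $\emptyset$ is the monoidal unit), on which closed surfaces appear as endomorphisms of $\emptyset$ and closed $3$-manifolds as $2$-endomorphisms, and check the comparison there; the general case then follows because both theories are monoidal and determined by their values on generators.

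Then I would carry out three identifications. \emph{Objects / $1$-manifolds.} For $B$ a $1$-manifold, $Z_G^{\omega}(B) = Rep(\mathbb{C}^{\tau_B(\omega)}[\fc{B}])$, the category of $\tau_B(\omega)$-twisted representations of the groupoid of connections; for $B = S^1$ this is $Rep^{\tau_{S^1}(\omega)}(G \wquot G)$, twisted representations of the loop groupoid. One must check that the transgressed $2$-cocycle $\tau_{S^1}(\omega)$ agrees with the transgression of $\omega$ to $LG$ used in \cite{freed-quinn, willerton} to define the twisted modular category; this is the statement that the two transgression constructions coincide, which is standard. \emph{Morphisms / surfaces.} Viewing a closed surface $\Sigma$ as a cobordism $\emptyset \ra \Sigma \la \emptyset$, one gets the span $\catname{1} \la \fc{\Sigma}^{\omega} \ra \catname{1}$ in $\uispgpd$ whose $1$-cocycle on $\fc{\Sigma}$ is forced to be trivial (it equals $s^{\ast}\theta$ for trivial $\theta$ on the point). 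Hence $Z_G^{\omega}(\Sigma)$ is the $1 \times 1$ ``matrix'' $\bigoplus_{[\gamma] \in \fc{\Sigma}/\sim} \mathbb{C}$, canonically the DW Hilbert space $\mathcal{H}_\Sigma^{\omega}$, whose basis is again indexed by flat connections modulo gauge --- the twisting affects not the dimension but the subsequent gluing data, exactly as for Freed--Quinn's modular functor.

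\emph{$2$-morphisms / $3$-manifolds.} For a $3$-cobordism $M : \Sigma \ra \Sigma'$, the twisted Nakayama formula (\ref{eq:twistednakayama}) with $\beta_Y = \tau_M(\omega)$ together with (\ref{eq:2morcobfmla}) gives, after the $1 \times 1$ reduction, the coefficient
\begin{equation}
(Z_G^{\omega}(M))_{\gamma,\gamma'} = \sum_{[A]} \frac{\tau_M(\omega)(A)}{|\opname{Aut}(A)|} = \frac{1}{|G|}\sum_{A} \tau_M(\omega)(A),
\end{equation}
the sum over flat connections $A$ on $M$ restricting to $\gamma,\gamma'$ on the boundary, weighted by the topological action $\tau_M(\omega)(A) = \exp\bigl(2\pi i \int_M ev^{\ast}\omega\bigr)$ evaluated at $A$. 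This is precisely the twisted DW state sum. The normalization $1/|G|$ emerges from the Nakayama factor $1/\#\opname{Aut}$ exactly as in Theorem \ref{thm:DWETQFT}, and the gluing law for such maps is the content of Lemma \ref{lemma:fvui1morcomp} together with the composition rules in $\uispgpd$; by the discussion of Picken's HQFT in Section \ref{sec:twistedfc} the extra $\theta_B$-twist in (\ref{eq:1mor-twisted-comp}) is exactly the gerbe transition-function contribution appearing in the DW gluing formula of \cite{freed-quinn}.

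The main obstacle I expect is the common core of these three steps: verifying that the transgressed cocycles $\tau_M(\omega)$ built here from the evaluation map and fibre integration genuinely coincide with the topological action terms and twisted-groupoid data used in \cite{freed-quinn} and \cite{willerton}. Since both are instances of the transgression $H^3(BG,U(1)) \ra H^{3-k}(Maps(M,BG),U(1))$ this is ``well known'', but spelling out the simplicial bookkeeping so that all normalization constants and signs match on the nose --- and checking that our weak-pullback composition reproduces their gluing rule with no stray factors --- is the only place where real work is needed; everything else is a formal consequence of the two 2-functor theorems already established.
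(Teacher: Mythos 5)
There is a genuine gap in your second identification step, and it propagates into your overall strategy. You claim that for a closed surface $\Sigma$, viewed as a $1$-morphism $\catname{1} \la \fc{\Sigma}^{\omega} \ra \catname{1}$, the $1$-cocycle on $\fc{\Sigma}$ is ``forced to be trivial'' because the feet carry the trivial $2$-cocycle. This confuses the compatibility condition (\ref{eq:iicocycondition}) with the definition of the $1$-cocycle: the condition only requires $s^{\ast}\theta_A = t^{\ast}\theta_B$ (vacuous here), while the classical field theory $\fc{-}^{\omega}$ explicitly assigns the transgression $\tau_{\Sigma}(\omega) \in Z^1(\fc{\Sigma},U(1))$, which is nontrivial in general. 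Consequently your conclusion that $Z_G^{\omega}(\Sigma)$ has a basis indexed by all gauge classes of flat connections, with ``the twisting affecting not the dimension but the subsequent gluing data,'' is false: by Proposition \ref{eq:twisted-matrix-rep} the single matrix entry is the space of intertwiners from the trivial representation to the one in which a gauge transformation $g$ acts by $\tau_{\Sigma}(\omega)(g)$, so Schur's lemma kills every class $[A]$ on which $\tau_{\Sigma}(\omega)$ restricts to a nontrivial character of $\opname{Aut}(A)$. The twisted DW Hilbert space of a closed surface genuinely has smaller dimension in general (this is visible already in Freed--Quinn's torus computation), so the very comparison you are trying to make fails at this step as stated.

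The same confusion undermines your reduction strategy. Restricting to $Aut(\emptyset)$ and then invoking monoidality and ``determination by generators'' does not suffice: the essential twisted data lives at codimension $2$ with nonempty boundary, where the paper's proof does its real work --- identifying Freed--Quinn's boundary algebra $A^{\ast} = \bigoplus_{[T]} L^{\ast}_{[T]}$ with the twisted groupoid algebra $\mathbb{C}^{\tau_{S^1}(\omega)}[\fc{S^1}]$ \emph{as an algebra} (central extensions of centralizers), and then interpreting the Hilbert space $E(Y)$ of a surface with boundary, with its decomposition $E(Y) \cong \bigoplus_{\lambda} E(Y,\lambda)\otimes E_{\lambda}$, as the $2$-linear map $Z_G^{\omega}(Y)$ via the bimodule/Frobenius-reciprocity argument and the role of $M_{\alpha}$ in twisting the inner product on sections. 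None of this is recoverable from the values on closed surfaces and closed $3$-manifolds, and it is exactly where the nontrivial $1$-cocycles $\tau_Y(\omega)$ enter. Your step on $3$-cobordisms (the weighted sum with $\tau_M(\omega)(A)/|\opname{Aut}(A)|$) is the right shape and matches the paper's use of the ``mass'' of a connection, and your final paragraph correctly flags the transgression bookkeeping as delicate; but as written the argument both asserts a wrong intermediate statement about closed surfaces and omits the with-boundary comparison that constitutes the heart of the paper's proof.
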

\begin{proof} First, we note that the DW model as described in
  \cite{freed-quinn} assigns a Hilbert space $E(Y)$ to each 2D
  manifold $Y$ with boundary.  We regard think of this manifold as a
  2D cobordism between 1D boundary components, and describe the
  correspondence between $E(Y)$ and the 2-linear map assigned by the
  $Z_G$ given by our construction.  Horizontal composition of 1- and
  2-morphisms corresponds to the trace over a tensor product of the
  data associated to a boundary in which incoming and outgoing
  boundary components are distinguished by orientation.

  So, to the 1D boundary in such a case, the DW model assigns a
  collection of labels.  These are irreducible representations of
  certain algebras. Due to the monoidal structure, this reduces to the
  case of the algebra assigned to a circle, which is:
  \begin{equation}
    A^{\ast} = \bigoplus_{[T]} L^{\ast}_{[T]}
  \end{equation}
  This is a direct sum over $[T]$, the distinct conjugacy classes in
  $G$, which is to say, the isomorphism classes of objects of
  $\fc{S^1}$.  The algebras $L^{\ast}$ and connecting isomorphisms
  between them form a line bundle over the space of $[T]$, which is
  classified by a cohomology class given by the transgression of the
  form there called $\hat{\alpha}$, and here $\omega$ in
  $H^3(BG,\mathbb{R}/\mathbb{Z})$.  (This is \cite{freed-quinn}
  Proposition 3.14).

  So the cocycles $\beta_{\fc{M}}$ are as we expect for a
  3-dimensional cobordism $M$.  We have summarized these functorial
  properties by the observation of \ref{sec:hqftcocycle} that the
  assignment of cocycles is an HQFT.

  Now the algebra structure of $L^{\ast}_{[T]}$ is such that its unit
  vectors form a central extension of the centralizer of $[T]$ (that
  is, $Aut([T])$ in the sense of the groupoid $\fc{S^1}$).  The
  central extension is classified by the cocycle just mentioned.  In
  our terminology, this says precisely that $L^{\ast}_{[T]}$ itself is
  the summand of $\mathbb{C}^{\tau_{S^1}(\omega)}[\fc{s^1}]$
  associated to $[T]$.  Thus,
  \begin{equation}
    A^{\ast} \cong \mathbb{C}^{\tau_{S^1}(\omega)}[\fc{S^1}]
  \end{equation}
  But our $Z_G^{\omega}$ assigns the circle the representation
  category of this algebra, which recovers the label set assigned by
  the DW model.

  Next, the DW model assigns a Hilbert space to each manifold with
  boundary $Y$ (in the following we use the notation of
  \cite{freed-quinn}, Section 3).  We will understand this to be a
  cobordism relating its boundary components.  Thus, this Hilbert
  space is to be understood as a 2-linear map.  As a Hilbert space,
  the $E(Y)$ are given as:
  \begin{equation}
    E(Y) = L^2(\overline{\mathcal{C}'_Y},\overline{\mathcal{L}_Y})
  \end{equation}
  That is, it is the space of (square-integrable, which condition is
  vacuous in the finite case) sections of a certain line bundle
  $\mathcal{L}_Y$ over the space $\mathcal{C}'_Y$ of flat connections
  (i.e. bundles with flat connections) on $Y$, which we would describe
  as the space of objects of $\fc{Y}$.  This bundle assigns a
  1-dimensional space to each such object, and to each homotopy of the
  classifying maps of these flat bundles (i.e. to each morphism $f$ of
  $\fc{Y}$) an isomorphism of these lines, given by (3.4) of
  \cite{freed-quinn}.  This isomorphism incorporates a factor which
  comes from an integral of $\hat{\alpha}$, or in our terms $\omega$.
  This factor is just the value of $\tau_Y(\omega)$ on $f$.

  A decomposition of the space of sections $E(Y)$ as a direct sum of
  components is given in (\cite{freed-quinn}, sec. 4) as:
  \begin{equation}
    E(Y) \cong \bigoplus_{\lambda} E(Y,\lambda) \otimes E_{\lambda}
  \end{equation}
  Here, the $\lambda$ run over all labels for the boundary: this is a
  product of labels $\lambda = (\lambda_i)_i$ over all boundary
  components $(\partial Y)_i$.  The representations $E_{\lambda}$
  associated to the whole boundary are therefore of the form
  $\otimes_i E_{\lambda_i}$. By the duality of $Hom$ and $\otimes$,
  these are isomorphic to the intertwiner spaces given in
  (\ref{eq:twisted-2lin-components}).  The $E(Y,\lambda)$ give the
  multiplicities of these representations.

  The above decomposition amounts to treating $E(Y)$ as a module for
  $A^{\star}$ for the algebra associated to $\partial Y$, which acts
  on the $E_{\lambda}$, or rather as a bimodule for the algebras
  $A^{\star}$ for the source and target objects (taking the conjugate
  algebra when changing orientation, hence turning a left action into
  a right action).  Frobenius reciprocity then ensures that taking a
  tensor product with this bimodule will act as multiplying by the
  matrix (\ref{eq:twisted-2lin-components}). This gives an
  interpretation of $E(Y)$ as the 2-linear map $Z_G^{\omega}(Y)$.

  The DW model then assigns a map between these Hilbert space $E(Y)$
  for each cobordism between manifolds $Y$ and $Y'$.  We further note
  that the inner product on this space, as a space of sections, is
  twisted by the cocycle $\alpha$, which is accomplished precisely by
  the inclusion of the map $M_{\alpha}$ in our construction of the
  2-linear map $Z_G^{\omega}(Y)$.

  Finally we check that $Z_G^{\omega} = \FV^{U(1)} \circ
  \fc{-}^{\omega}$ gives the data of the twisted DW model for
  2-morphisms $M$ of $\catname{3Cob_2}$, which are understood as
  3-dimensional cobordisms of manifolds with boundary.  In
  \cite{freed-quinn} they are described as manifolds with corners.

  Part of this proof is substantially the same as that of Theorem
  \ref{thm:DWETQFT}, which shows in the untwisted case with empty
  boundary that our formula reproduces the (unnumbered) formula
  directly following (\cite{freed-quinn}, 5.14).  That formula uses
  the ``mass'' of a connection on (there described as a
  ``representation'' of $\pi_1(M)$ into the gauge group - though they
  denote the manifold by $Y$), which is just the groupoid cardinality
  $\frac{1}{Aut([A])}$ for a class $[A]$ of connections (denoted there
  by $\gamma$), as in our formula.  This gives the measure used in the
  integrals over the space of connections, as we expect.

  Finally, the explicit calculations of amplitudes in
  \cite{freed-quinn} are generally contractions of the 2-linear maps
  we obtain.  Moreover, they are converted to amplitudes from linear
  operators between representation spaces by converting
  representations to characters, taking the trace.  Thus, since the
  trace of the identity for a representation $\rho$ is $dim(\rho)$,
  the formulae there contain factors of $dim(E_{\lambda})$, where
  $E_{\lambda}$ is the representation space for a representation on
  the whole boundary (for us, the tensor product of the
  representations determining a given component of the natural
  transformation).  So finally the computations of amplitudes such as
  (\cite{freed-quinn}, 5.4) for the torus are precisely the result of
  applying this procedure to the natural transformations from
  $\FV^{U(1)}$.

  We conclude that the DW model for manifolds with
  corners as presented in \cite{freed-quinn} can be recovered from our
  $Z_G^{\omega}$.
\end{proof}

\section{Conclusion}

In this paper, our goal has been to give a concrete description of the
quantization functor which plays a role in the DW model.
This is consistent with the program of Freed-Hopkins-Lurie-Teleman
\cite{FHLT}, in which topological quantum field theories are described
in terms of a factorization into two parts.  The first part, the
classical field theory, takes values in groupoids.  The second part
assigns algebraic data to groupoids - in particular, $k$-vector
spaces, or indeed $k$-algebras in an appropriate sense, to the
groupoids associated to codimension-$k$ manifolds.

The point is that the groupoids represent the moduli space for the
field configurations of the classical theory.  As we have seen, the
full functor, to describe the DW model in its complete form, must
incorporate the effect of data from groupoid cohomology.

One purpose of studying the quantization functor separately is that we
hope to gain some understanding of the nature of the quantization
process.  Quantization is well-studied in the situation of a process
(in good situations, a functor) taking classical configuration spaces
to quantum Hilbert spaces.  The higher-categorical $k$-vector spaces
are less commonly used in the physical context and ETQFT gives a
sufficiently simple, yet nontrivial, setting in which to study this
aspect of quantization.  What our functors $\FV$ and its twisted
version $\FV^{U(1)}$ illustrate is that this process can be described
in terms of a simple, quite universal process once we understand the
category $\Span(\Gpd)$, or its twisted version $\uispgpd$.

In particular $\FV^{U(1)}$ is an extension of the very natural
"2-linearization" process $\FV$, which is entirely canonical.
Groupoids are taken to their representation categories.  The morphisms
(spans) are taken to 2-linear maps constructed naturally from
induction and restriction functors.  The 2-morphisms (spans of spans)
are taken to natural transformations constructed naturally from the
unit and counit for the adjunctions between these functors.  This is
an entirely canonical process generalizing the straightforward
``pull'' and ``push'' of functions through spans of sets which gives
(natural number valued) matrix multiplication.  Thus, the quantization
functor is simply giving a canonical representation of $\Span(\Gpd)$,
which is then in some sense the fundamental setting.

One important fact in the untwisted case is that, if we take the
representation category $\FV(A)$ to be concrete, with its natural
``underlying vector space'' functor into $\V$, we have a
Tannaka-Krein reconstruction theorem.  That is, this 2-vector space
(and the forgetful functor into $\V$) allows the groupoids (objects
of $\Span(\Gpd)$) to be reconstructed completely.  At the level of
morphisms, and particularly 2-morphisms, however, we do lose
information.  This is easy to see in the special case of
$Hom(\catname{1},\catname{1})$, where $\FV^{U(1)}$ restricts to give
groupoidification.  Here, spans of groupoids, as morphisms, are taken
to linear maps whose components detect only \textit{groupoid
cardinalities} of spans.  This does not determine the groupoids up to
isomorphism.  So in particular, the quantization functor is not
faithful, and forgets information about the classical category as part
of the ``sum over histories'' which defines the 2-morphisms.

The motivation for using $\Span(\Gpd)$ and its twisted extension is
how it reflects physically important aspects of the quantum field
theory.  The objects are groupoids because the moduli problem for
gauge theory, like many other geometric structures, has symmetries
which are not seen in a topological space of field configurations.
The quantization functor, our $\FV^{U(1)}$, is able to retain this
information about symmetry, since it assigns the representation
category to such groupoids.  This characterizes them up to Morita
equivalence.  In general, systems whose configuration spaces are
represented by Morita-equivalent groupoids are "physically
indistinguishable".

However, rather than working in the bicategory $\Gpd$, we expand it to
consider $\Span(\Gpd)$.  We have noted some work
(\cite{hoffnung-span}, \cite{kenney-pronk}) on such span categories
generally.  One important fact is that in general $\Span(\catname{C})$
for a (1-)category is a universal (bi)category containing a copy of
$\catname{C}$, for which every morphism has a (two-sided) adjoint.  In
the case $\catname{C}$ is a bicategory, such as $\Gpd$, our
construction also gives adjoints for 2-morphisms (in
\cite{hoffnung-span}, one gets a monoidal tricategory, which we have
made into a bicategory by taking 2-morphisms as mere equivalence
classes of spans of span maps).  This construction of ``adjoining
adjoints'' is somewhat analogous to localization, which forces
morphisms to be invertible.  Instead, we force morphisms to be
adjointable.  This is the key feature captured in $\Span(\Gpd)$, and
is also a key characteristic of the linear and 2-linear category.

The physical significance of adjointability is that if a morphism
describes a process by which a system evolves, its adjoint is the same
process with the reversed time-sense.  In the ETQFT case, the
cobordism category suggests that we should think of 2-morphisms as
``time evolution'' in this sense. The 1-morphisms then describe a
space with boundary as linking its boundary components, and the
adjoint simply reverses the sense of input and output boundary
components.

In the twisted case, we must expand this setting to $\uispgpd$, but
this behaves much like $\Span(\Gpd)$ except that the groupoids carry
extra cocycle information.  This information is the higher-categorical
extension of the Lagrangian functional, which is simply the 0-cocycle
associated to 2-morphisms.  This fits the approach of \cite{FHLT}, in
which the cocycle $\omega$ on $BG$, and the gerbe it classifies, is
taken to be the true physical setting for the action.  The
transgressions to the moduli spaces for connections on manifolds of
different dimensions are then particular manifestations of this action.

In subsequent work, it may be of interest to consider whether this
larger bicategory $\uispgpd$, or perhaps a weaker variation, has some
important universal properties analogous to those of $\Span(\Gpd)$.
For now, it is sufficient to observe that it is the natural target for
the classical field theory of the DW model, and likely other
interesting toy physics models relevant to TQFT.  A subsequent paper
by the author with Derek Wise will consider an analogous construction
with compact Lie groups and give an explicit construction of a
generalization of $\FV$ and $\FV^{U(1)}$ which applies in the infinite
setting.

\section{Acknowledgements}

The author would like to thank the referees of earlier versions for
important suggestions, as well as John Baez, Dan Christensen, Alex
Hoffnung, Thomas Nikolaus, Mike Stay, Jamie Vicary, and Derek Wise for
valuable discussions.

This work was partially financed by Portuguese funds via the Fundacao
para a Ciencia e a Tecnologia, through project number
PTDC/MAT/101503/2008, New Geometry and Topology.

\bibliography{ETQFT-DW}
\bibliographystyle{acm}

\end{document}